\DeclareSymbolFont{cyrletters}{OT2}{wncyr}{m}{n}
\DeclareMathSymbol{\Sha}{\mathalpha}{cyrletters}{"58}
\theoremstyle{plain}
\newtheorem{theorem}{Theorem}[section]
\newtheorem{corollary}[theorem]{Corollary}
\newtheorem{lemma}[theorem]{Lemma}
\newtheorem{proposition}[theorem]{Proposition}
\theoremstyle{definition}
\newtheorem{definition}[theorem]{Definition}
\theoremstyle{remark}
\newtheorem*{remark}{Remark}
\numberwithin{equation}{section}
\newcommand{\ZZ}{{\mathbb Z}}
\newcommand{\RR}{{\mathbb R}}
\newcommand{\HH}{{\mathbb H}}
\newcommand{\CC}{{\mathbb C}}
\newcommand{\sF}{{\mathcal F}}
\newcommand{\R}{{\bf R}}
\newcommand{\SL} {{\rm SL}}
\newcommand{\Z}{\mathbb Z}
\newcommand{\C}{\mathbb C}
\def\matr#1#2#3#4{\left(\begin{array}{cc}#1&#2\\#3&#4\end{array}\right)}
\def\cF{\mathcal F}
\def\H{\mathbb H}
\def\cF{\mathcal F}
\def\({\left(}
\def\){\right)}
\def\<{\left<}
\def\>{\right>}
\newcommand{\ol}[1]{\overline{{#1}}}
\newcommand{\wt}[1]{\widetilde{#1}}
\newcommand{\im}[1]{\text{Im}\(#1\)}
\newcommand{\abs}[1]{\left|#1\right|}
\newcommand{\htE}{{\widehat{E}}}
\newcommand{\dhtE}{{\widehat{\widehat{E}}}}
\newcommand{\htz}{{\widehat{\zeta}}}
\newcommand{\tM}{{\tilde{M}}}
\def\PSL{\text{PSL}}
\newcommand{\smatr}[4]{\(\begin{smallmatrix} #1 & #2 \\ #3 & #4\end{smallmatrix}\)}
\newcommand{\sgn}[1]{{\rm sgn}(#1)}
\begin{document}

\title[Polyharmonic Maass Forms]{Polyharmonic Maass Forms for $\PSL(2,\ZZ)$}

\author{Jeffrey C. Lagarias}
\address{Dept. of Mathematics, The University of Michigan, Ann Arbor, MI 48109-1043, USA}
\email{lagarias@umich.edu}

\author{Robert C. Rhoades}
\address{Center for Communications Research, Princeton, NJ 08540}
\email{rob.rhoades@gmail.com}

\thanks{Research of the first author was partially supported by
NSF grants DMS-1101373 and DMS-1401224.
Research of the second author was partially supported by an
NSF Mathematical Sciences Postdoctoral Fellowship.}

\date{July 30, 2015}
\thispagestyle{empty} \vspace{.5cm}

\subjclass[2010]{11F55, 11F37, 11F12}
\keywords{modular forms, polyharmonic, harmonic, Maass forms}

\begin{abstract}
We discuss  polyharmonic Maass forms of even integer weight on $\PSL(2,\Z)\backslash \H$,
which are a generalization of classical Maass forms. 
We explain the role of the real-analytic Eisenstein series $E_k(z,s)$ and the differential operator 
$\frac{\partial}{\partial s}$ in this theory. 
\end{abstract}

\maketitle

\begin{center}
\emph{  
Dedicated to the memory of Marvin Knopp}
\end{center}

%%%%%%%%%%%%%%%%%
%%                                          %%
%%   1  INTRODUCTION       %%
%%                                          %%
%%%%%%%%%%%%%%%%%
\section{Introduction}\label{sec:Intro}
Classical Maass forms of even integer weight $k\in 2\Z$  for $\PSL(2, \Z)$ are smooth functions on $\H:= \{ z=x+iy \in \C : \Im(z) >0\}$ such that 
\begin{enumerate}
\item[(1)] 
(modular invariance condition)
$f(z) = f \big |_{k} \gamma(z)$ for each $z \in \H$ and $\gamma = \smatr{a}{b}{c}{d} \in \PSL_2(\Z)$ where 
the \emph{slash operator} of weight $k\in \Z$ is defined by 
$$ g\big |_k  \gamma(z) = (cz+d)^{-k} g\( \frac{az+b}{cz+d}\).$$
That is,
$f(z)$ satisfies
$$f( \frac{az+b}{cz+d}) = (cz+d)^k f(z),
$$ for $\gamma \in \PSL_2(\Z).$
\item[(2)]
(Laplacian eigenfunction condition)
$f$ satisfies $(\Delta_k-\lambda) f = 0$ for some $\lambda \in \C$, where 
$$\Delta_k := y^2 \( \frac{\partial^2}{\partial x^2} + \frac{\partial^2}{\partial y^2}\) 
- i ky\(\frac{\partial}{\partial x} + i \frac{\partial}{\partial y}\)$$
is the weight $k$ hyperbolic Laplacian.
(We follow the convention of Maass \cite{Maa83}
for the sign of the Laplacian. Many authors call $-\Delta_k$
the hyperbolic Laplacian, for example \cite{DIT13}.)
\item[(3)]
(moderate growth condition)
There exists an $\alpha \in \R$ such that $f(x+iy) = O(y^\alpha)$ as $y\to \infty$,
uniformly in $x \in \RR$. 
\end{enumerate}
Such forms have played an important role in number theory and  automorphic forms,
see for instance, Section 1.9 of \cite{Bu97}. More generally, we may 
consider forms of integer weight $k \in \Z$,
but nothing is gained since all odd integer weight forms on $PSL(2, \ZZ)$ must vanish identically, due to 
the weight $k$ modular invariance condition  applied to $\gamma = \pm
\smatr{0}{-1}{1}{0}$.

In this paper we study the situation where the eigenvalue
$\lambda=0$, which is the case corresponding to holomorphic modular forms,
where  we relax the Laplacian eigenfunction condition to require only that  the  forms satisfy 
\begin{equation}\label{poly-h-eqn}
\(\Delta_k\)^m f (z)= 0
\end{equation}
 for some non-negative integer $m$. 
 We call such  $f(z)$   {\em polyharmonic Maass forms}, in parallel with the literature
on polyharmonic functions for the Euclidean Laplacian,
on which there is an extensive literature,
starting around 1900 (see Almansi \cite{Al1899}, Aronszajn, Crease and Lipkin \cite{ACL83}, Render \cite{Ren08}). 
The integer parameter $m$  in \eqref{poly-h-eqn} might  be termed the {\em  (harmonic) order} 
in parallel with the literature on polyharmonic functions.   
We will instead use the term {\em harmonic depth} 
because  the term  ``order" is 
used  in   conflicting ways in the literature\footnote{The partial differential equations  literature
assigns order $2m$ to    $\(\Delta_k \)^m$  
 when treated in terms of the differential
operators $\frac{\partial}{\partial x}$ and $\frac{\partial}{\partial y}$. A second conflict is that the 
term $p$-harmonic Maass form is  by Bruggeman \cite{Brug14}  with a different meaning,
 to refer to a function annihilated by the  {\em $p$-Laplacian}, in which $p$ corresponds to the weight parameter $k$ in our notation.}.
 Moreover we  allow  the harmonic depth 
  to take half-integer
 values, as follows.  We assign  to any nonzero holomorphic modular form  $f(z)$
 the harmonic depth $\frac{1}{2}$, 
 because  it is annihilated  by  the
d-bar operator $\frac{\partial}{\partial \bar{z}}$ which is ``half" of the harmonic Laplacian 
$\Delta_k =\( y^2\frac{\partial}{\partial z} + 2iky \)\frac{\partial}{\partial \bar{z}}$.
In addition, to any weight $k$ Maass form $f(z)$ such that $\(\Delta_k\)^{m} f(z)= g(z)$
with $g(z)$ a nonzero holomorphic modular form 
we assign
 harmonic depth $m + \frac{1}{2}$. 

 In a companion paper \cite{LR15S} we will treat functions satisfying the more general equation
\begin{equation}\label{poly-h-eqn2}
( \Delta_k- \lambda)^m f (z)= 0
\end{equation}
 for some non-negative integer $m$, where $\lambda \in \CC$ with $\lambda \ne 0$.
 We call functions satisfying \eqref{poly-h-eqn2} {\em shifted polyharmonic functions}
  with {\em eigenvalue shift $\lambda$}. If they transform as modular forms of weight $k$
  then  we call them  {\em shifted polyharmonic depth  Maass forms}. We let $V_k^m(\lambda)$ 
  denote the vector space of all such weight $k$ forms with eigenvalue $\lambda$ 
  on $PSL(2, \ZZ)$ that have moderate growth at the cusp. 
 For $\lambda \ne 0$ we refer to the minimal $m$ annihilating such a function as its {\em (shifted) harmonic depth};
 it is always an integer since holomorphic forms do not occur.
 From this more general perspective the  case $\lambda=0$ is exceptional
  in allowing  forms having half-integer depth. 

In this paper we  determine the spaces $V_k^m(0)$ 
 for all even integer weights $k$ for the full modular group $PSL(2, \ZZ)$,
 showing that it is finite-dimensional  and explicitly exhibiting a full set
 of basis elements.  The finite dimensionality of these spaces has entirely to do with the moderate growth condition; if this is relaxed, then 
the resulting space of solutions can be infinite dimensional. Our main observation is that the 
new members of the vector spaces in the harmonic depth $m$ case aside from  those  the harmonic depth $1$ case
involve  derivatives in the $s$-variable of nonholomorphic Eisenstein series 
$E_k(z, s)$, evaluated at $s=0$.

Note that modular forms with poles at cusps are
never of moderate growth; thus weakly holomorphic modular forms are not included in the classes of
forms we study.   Bruggeman \cite{Brug14} has recently considered 
larger classes of  weight $k$ Maass forms for 
the full modular group $SL(2, \ZZ)$, allowing 
linear exponential growth at the cusps, for the $\Delta_k$-operator.

  By the term classical Maass form, we mean that our definition includes classical holomorphic modular forms
  for all weights $k \in 2\ZZ$.
The original treatment of  Maass \cite{Maa53}, \cite[Chap. IV]{Maa83} and  
other later treatments  (Bump \cite[Sect. 2.1]{Bu97}, Duke et al \cite[Sect. 4]{DFI02}) use a different (and generally inequivalent)
definition. They  define {\em (original) weight $k$ Maass forms}, denoted  here $f^{[M]}(z)$, to require instead of  (1), (2), (3)  the conditions
(1'), (2') , (3) with 
\begin{enumerate}
\item[(1')] 
(modified modular invariance condition)
For each $z \in \H$ and $\gamma = \smatr{a}{b}{c}{d} \in \PSL_2(\Z)$ 
there holds
$$ f^{[M]}(z)  = j_{\gamma}(z)^{-k} f^{[M]}\( \frac{az+b}{cz+d}\),$$
with multiplier system
$$
j_\gamma(z) := \frac{ cz+d}{|cz+d|} =e^{i \arg(cz+d)}.
$$
\item[(2')]
(modified Laplacian eigenfunction condition)
$f^{[M]}$ satisfies $(\Delta_k^{[M]}-\lambda) f^{[M]} = 0$ for some $\lambda \in \C$, where 
$$
\Delta_k^{[M]} := y^2 \( \frac{\partial^2}{\partial x^2} + \frac{\partial^2}{\partial y^2}\) 
- i ky\frac{\partial}{\partial x}
$$
is the  modified  weight $k$ hyperbolic Laplacian, $\Delta_k^{[M]} = \Delta_k - ky \frac{\partial}{\partial y}.$
\end{enumerate}
The two definitions coincide for weight $k=0$ but differ otherwise. The multiplier $j_{\gamma}(z)$ is
not holomorphic, so for $k \ne 0$ original weight $k$ Maass forms  are never holomorphic functions of $z$.
Given a classical Maass form $f(z)$ of  weight $k$, the 
associated function 
$f^{[M]}(z) := y^{\frac{k}{2}} f(z)$ is an original weight $k$ Maass form
and vice versa, 
so one can  easily transfer results between the two definitions.
We use  classical weight $k$ Maass forms  
because holomorphic modular forms 
of all weights $k \in 2\ZZ$ play a special role in our results.  
%%%%%%%%%%%%%%%%%%%%%%%%%%%%%%%%
%We note that the  Maass differential operators used in the context of (1') differ from those for (1).
%%%%%%%%%%%%%%%%%%%%%%%%%%%%%%%%

%***************************
% Subsection 1.1
%***************************
\subsection{Polyharmonic Maass forms}\label{sec11}

 Forms satisfying  conditions (1), (3) of our Maass form definition 
  but with the Laplacian eigenfunction condition (2) relaxed to $\Delta_k^m f  = 0$ 
 are called {\em $m$-harmonic Maass forms} of weight $k$ for $\PSL(2, \Z)$.   
Let $V_{k}^m:= V_k^m(0)$ denote the vector space of such functions, 
where $m \in \frac{1}{2} \ZZ$ with $m \ge 1/2$.
 It is known that this space is finite dimensional, without explicitly determining the dimension,
 see  \cite[Theorem 8.5]{Bo97}.
The object of this paper is to give an explicit construction of a basis of $V_k^m(0)$, and to  explain the special role of 
the non-holomorphic Eisenstein series in this construction.

The non-holomorphic Eisenstein series $E_0(z, s)$ of weight $0$  for  $\PSL(2, \ZZ)$, is given by 
\begin{equation}~\label{100}
E_0(z, s) := 
\frac{1}{2}\left( \sum_{(m,n) \in \ZZ^{2} \backslash (0,0)} \frac{y^s}{|mz+ n|^{2s}}\right) =
\frac{1}{2} \zeta(2s) \left(\sum_{{(c,d) \in \Z^2} \atop{ (c,d)=1}} \frac{y^s}{|cz+ d|^{2s}}\right).
\end{equation}  
with  $z= x+iy \in \HH$ and $\Re(s) > 1$.
The completed weight $0$ Eisenstein series is 
$$
%%%%%%%%%%%%%
% abbreviation for E_0^{\ast}
%%%%%%%%%%%%%%
\htE_0(z,s) := \pi^{-s} \Gamma(s) E_0(z,s).
$$
For each $z \in \HH$ the completed series   is known to analytically continue to a meromorphic function of $s$ 
 that satisfies the functional equation
$$
\htE_0(z, s)= \htE_0(z, 1-s).
$$
The singularities of $\htE_0(z, s)$ are simple poles at $s=0$ and $s=1$. 
For our purposes it is better to work with the  {\em doubly-completed non-holomorphic Eisenstein series} 
\begin{equation}~\label{eqn:doublycompletedWt0}
\dhtE_0(z,s) = s(s-1)\pi^{-s} \Gamma(s)E_0(z,s)
\end{equation}
which removes the two poles  and maintains the symmetry between $s$ and $1-s$ .
See  Theorem \ref{th38} for details. 
%%%%%%%%%%%%%%%%%%%%%%%%%%%%%%%%
%\begin{remark}
%This series has an analytic continuation to all $s\in \C$, giving for fixed $z \in \HH$ an 
%entire function in $s$. 
%Traditionally, the completed weight $0$ Eisenstein series is $E_0^\ast(z,s) = 
%\pi^{-s} \Gamma(s) E_0(z,s)$.  This series has a pole at $s=1$. 
%The factor $s(1-s)$ removes this pole and maintains the symmetry between $s$ and $1-s$ 
%discussed in Section \ref{sec:Wt0}. 
%\end{remark}
%%%%%%%%%%%%%%%%%%%%%%%%%%%%%%%%%
We define the Taylor coefficients with respect to $s$ by $F_n(z)$, namely
\begin{equation}
\dhtE_0(z,s) = \sum_{n=0}^\infty F_n(z) s^n.
\end{equation}

%%%%%%%%%%%%%%%%%%%%%%%%%%%%%%%%%%
%
% Theorem 1.1 [restated to include half integer weights]
%
%%%%%%%%%%%%%%%%%%%%%%%%%%%%%%%%%%

\begin{theorem}\label{thm:wt0VectorSpace}
For integer $m \ge 1$ the  vector space $V_0^m(0)$ of weight 0 $m$-harmonic Maass forms 
has $V_0^m (0)= V_0^{m-1/2}(0)$ and is $m$-dimensional. 
 Moreover, the set $\{ F_0(z), \cdots, F_{m-1}(z) \}$ is a basis for $V_0^m(0)$. 
\end{theorem}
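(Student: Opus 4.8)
The plan is to place the proposed basis functions inside $V_0^m(0)$ and establish their linear independence using the $s$-eigenvalue equation for the Eisenstein series, and then to obtain a matching upper bound on $\dim V_0^m(0)$ by an induction on $m$ driven by the operator $\Delta_0$. First, for membership: each term $\operatorname{Im}(\gamma z)^s$ in the defining series of $E_0(z,s)$ is a $\Delta_0$-eigenfunction with eigenvalue $s(s-1)$ (since $\Delta_0$ is $\PSL(2,\RR)$-invariant and $\Delta_0 y^s = s(s-1)y^s$), and the completing factor $s(s-1)\pi^{-s}\Gamma(s)$ does not depend on $z$, so $\Delta_0 \dhtE_0(z,s) = s(s-1)\dhtE_0(z,s)$. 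Because the double completion cancels the poles of $\htE_0$ at $s=0$ and $s=1$ (Theorem \ref{th38}), the function $\dhtE_0(z,s)$ is holomorphic at $s=0$, the Taylor expansion $\dhtE_0(z,s)=\sum_n F_n(z)s^n$ is legitimate, each $F_n$ inherits weight-$0$ modularity and moderate growth from $\dhtE_0$, and $F_0$ is a nonzero constant function (Theorem \ref{th38}). Matching coefficients of $s^k$ in $\Delta_0 \dhtE_0 = (s^2-s)\dhtE_0$ yields the recursion $\Delta_0 F_k = F_{k-2}-F_{k-1}$ with the convention $F_{-1}=F_{-2}=0$, and hence $\Delta_0^{\,j}F_N = \sum_{i=0}^{j}\binom{j}{i}(-1)^{j-i}F_{N-j-i}$. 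In particular $\Delta_0$ carries $\operatorname{span}\{F_0,\dots,F_{m-1}\}$ into $\operatorname{span}\{F_0,\dots,F_{m-2}\}$, so $\Delta_0^m$ annihilates the former; thus $F_0,\dots,F_{m-1}\in V_0^m(0)$.

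For linear independence, the displayed formula gives $\Delta_0^{\,m-1}F_n = 0$ for $n\le m-2$ while $\Delta_0^{\,m-1}F_{m-1} = (-1)^{m-1}F_0\ne 0$. Applying $\Delta_0^{m-1}$ to a putative relation $\sum_{n=0}^{m-1}c_nF_n=0$ forces $c_{m-1}=0$; applying $\Delta_0^{m-2}$ then forces $c_{m-2}=0$, and so on down to $c_0=0$. (Alternatively, Theorem \ref{th38} shows that the $y^0$-part of the constant term of $F_n$ is a polynomial in $\log y$ of degree exactly $n$, which gives the same conclusion.) Hence $\dim V_0^m(0)\ge m$.

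For the matching upper bound I would use the map $\Delta_0\colon V_0^m(0)\to V_0^{m-1}(0)$. Since $\Delta_0$ commutes with the weight-$0$ slash action it preserves weight-$0$ modularity, and $\Delta_0^{m-1}(\Delta_0 f)=0$ when $\Delta_0^m f = 0$; the one delicate point is that $\Delta_0 f$ remains of moderate growth. Writing $f=\sum_n a_n(y)e^{2\pi i nx}\in V_0^m(0)$, moderate growth together with $\bigl(y^2(\partial_y^2-4\pi^2n^2)\bigr)^m a_n=0$ forces each $a_n$ with $n\ne 0$ to decay exponentially, and $(y^2\partial_y^2)^m a_0=0$ forces $a_0$ into the span of $\{(\log y)^i,\ y(\log y)^i : 0\le i<m\}$; each of these, and its image under $\Delta_0$, is of moderate growth, so indeed $\Delta_0 f\in V_0^{m-1}(0)$. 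Now $\ker\bigl(\Delta_0|_{V_0^m(0)}\bigr)=V_0^1(0)$, and $V_0^1(0)=\CC$: a weight-$0$ harmonic Maass form of moderate growth has constant term $c_0+c_1 y$ and exponentially decaying higher modes by the Fourier analysis just used, whence the maximum principle on the modular surface forces $c_1=0$ and $f\equiv c_0$. Therefore $\dim V_0^m(0)\le 1+\dim V_0^{m-1}(0)$, and induction (base case $m=1$ just verified, with $F_0$ spanning $V_0^1(0)$) gives $\dim V_0^m(0)\le m$. Combining with the lower bound, $\dim V_0^m(0)=m$ and $\{F_0,\dots,F_{m-1}\}$ is a basis.

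Finally, for any $f\in V_0^m(0)$ the element $\Delta_0^{m-1}f$ lies in $V_0^1(0)=\CC$, i.e. it is a holomorphic modular form of weight $0$, so $f$ has harmonic depth at most $m-\tfrac12$; this gives $V_0^m(0)\subseteq V_0^{m-1/2}(0)$, and the reverse inclusion is immediate from the definitions, so $V_0^m(0)=V_0^{m-1/2}(0)$. I expect the main obstacle to be the argument of the previous paragraph — verifying that $\Delta_0$ genuinely maps $V_0^m(0)$ into $V_0^{m-1}(0)$ (precisely where the moderate growth hypothesis is essential, in line with the remark that relaxing it makes these spaces infinite-dimensional) and pinning down $V_0^1(0)=\CC$; the placement of the $F_n$ in $V_0^m(0)$ and their independence are essentially bookkeeping with the Eisenstein eigenvalue equation.
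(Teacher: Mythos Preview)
Your proof is correct, and its overall architecture---lower bound from the Taylor coefficients of $\dhtE_0(z,s)$, upper bound by induction on $m$ via the map $\Delta_0\colon V_0^m(0)\to V_0^{m-1}(0)$---matches the paper's. The genuine difference lies in how you pin down the base case $V_0^1(0)=\CC$. The paper applies the Bruinier--Funke operator $\xi_0$ to any $f\in V_0^1(0)$, observes that $\xi_0 f$ is a weight~$2$ holomorphic modular form of moderate growth, and invokes the classical fact $M_2=\{0\}$; this forces $f$ to be holomorphic of weight~$0$, hence constant. You instead argue directly by the maximum principle: the Fourier analysis gives $f(z)=c_0+c_1y+(\text{exponentially decaying})$, and then a harmonic, $\PSL(2,\ZZ)$-invariant real function on $\HH$ with a definite limit (or $\pm\infty$) at the cusp would have an interior extremum on the fundamental domain unless it is constant. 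Your argument is more self-contained (it does not appeal to $M_2=0$) but is specific to weight~$0$, where $\Delta_0$ is the genuine hyperbolic Laplacian with real coefficients so that real and imaginary parts of $f$ are separately harmonic; the paper's $\xi_k$ approach is the one that generalizes to the other weights treated in Theorem~\ref{thm:arbitraryweight}. Similarly, you verify directly that $\Delta_0$ preserves moderate growth via the Fourier ODE $L_n^m a_n=0$, whereas the paper routes this through $\xi_k$ (Lemma~\ref{lem:subspaceXi}) and the Whittaker-function description of polyharmonic Fourier coefficients (Theorem~\ref{th41}); both are valid, but the paper's version is again the form needed for arbitrary weight.
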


We call this set the {\em Taylor basis}. For example, the space $V_0^{1/2}(0)$ is one-dimensional
and is spanned by
the constant functions.

Automorphic forms, including Maass forms, are related to forms of different weights via differential operators called 
the (Maass) weight  raising and lowering operators. See, for instance, Section 2.1 of \cite{Bu97}.   
In the theory of harmonic Maass forms there is a closely related operator which moves harmonic forms of weight 
$k$ to forms of weight $2-k$.  This operator was introduced by Bruinier and Funke \cite{BF04} and is given by 
\begin{equation}\label{eqn:xi}
\xi_k := 2i y^k \ol{\frac{\partial}{\partial \ol{z}}}.
\end{equation}
In other words, if $f: \H \to \C$ is a smooth function then $\xi_k(f) = 2i y^k \ol{\frac{\partial}{\partial \ol{z}} f},$
using the Wirtinger derivative 
 $\frac{\partial}{\partial \ol{z}}= \frac{1}{2} (\frac{\partial}{\partial x} + i \frac{\partial}{\partial y}).$

Hence, in the context of studying weight $0$ harmonic forms it is natural to also introduce and study weight $2$ forms.  
We define the completed real-analytic weight $2$ Eisenstein series for $Re(s)>1$ by 
\begin{equation}\label{301}
\htE_2(z,s) = \pi^{-(s+1)} \Gamma(s+1) E_2(z,s)=\pi^{-(s+1)} \Gamma(s+1) 
\left(\frac{1}{2}\sum_{{(m,n) \in {\ZZ}^2 \backslash (0,0)} } \frac{y^s}{(mz+n)^2 |mz+ n|^{2s}}\right),
\end{equation}
and  the {\em doubly-completed real-analytic weight $2$ Eisenstein series}   by
\begin{equation}\label{eqn:doublycompletedWt2}
\dhtE_2(z,s):= (s+1)s \pi^{-(s+1)} \Gamma(s+1) \zeta(2s+2) \left(\frac{1}{2}\sum_{{(c,d)} \atop{ (c,d)=1}} \frac{y^s}{(cz+d)^2 |cz+ d|^{2s}}\right).
\end{equation}
Parallel to  the case of $\dhtE_0(z,s)$ this series has an analytic continuation to $s\in \C$ (see 
Theorem \ref{th38}).
Define its Taylor coefficients by 
\begin{equation}
\dhtE_2(z,s) = : \sum_{n=0}^\infty G_n(z) s^n.
\end{equation}
In analogy with Theorem \ref{thm:wt0VectorSpace} we have the following.
%**************
%
% Thoerem 1.2 
%
%Note that $G_0(z)=0$ forced renumbering of basis.
%
%************
\begin{theorem}\label{thm:wt2VectorSpace}
For integer $m \ge 0$ the  vector space $V_2^m(0)$ of weight 2, $m$-harmonic Maass forms
has $V_2^{m}(0) = V_2^{m+ 1/2}(0)$ and  is $m$-dimensional.  
Moreover, $G_0(z) \equiv 0$ and the set $\{ G_1(z), \cdots, G_{m}(z)\}$ is a basis for $V_2^m(0)$. 
\end{theorem}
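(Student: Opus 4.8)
The plan is to mirror the proof of Theorem \ref{thm:wt0VectorSpace} for weight $0$, using the $\xi$-operator as a bridge between weight $0$ and weight $2$. First I would recall the Fourier expansion of the real-analytic Eisenstein series $E_2(z,s)$ and hence of $\dhtE_2(z,s)$; applying $\Delta_2$ to it should show, via the known eigenvalue relation $\Delta_k E_k(z,s) = s(s-1+k)E_k(z,s)$ (here $\Delta_2 E_2(z,s) = s(s+1)E_2(z,s)$), that $\Delta_2 \dhtE_2(z,s)$ is, up to the explicit $s$-dependent factors, again a multiple of $\dhtE_2(z,s)$. Differentiating the identity repeatedly in $s$ and evaluating at $s=0$ then yields that each Taylor coefficient $G_n(z)$ satisfies $\Delta_2 G_n = c_{n-1} G_{n-1} + (\text{lower})$, so that $\Delta_2^n G_n = 0$ up to a nonzero scalar and in particular $G_1, \dots, G_m$ all lie in $V_2^m(0)$. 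The vanishing $G_0 \equiv 0$ should fall out immediately from the factor $(s+1)s$ in \eqref{eqn:doublycompletedWt2}, which forces the constant term of the Taylor expansion to vanish; one must check that $\dhtE_2(z,s)$ is genuinely holomorphic at $s=0$ (no pole), which is the content of Theorem \ref{th38}.

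Next I would establish linear independence of $\{G_1, \dots, G_m\}$. The cleanest route is to read off the constant term (zeroth Fourier coefficient in $x$) of each $G_n(z)$ as a function of $y$: from the functional equation and the explicit constant term of $E_2(z,s)$ one gets a two-term expression whose $s$-expansion produces, at order $n$, a combination of $y$, $y^{1-s}|_{\text{derivatives}}$, i.e. terms like $y$, $\log y$, $y\log y$, and $y \cdot (\log y)^j$, with a strictly growing set of such monomials as $n$ increases. Because these functions of $y$ are linearly independent, no nontrivial combination of the $G_n$ can have vanishing constant term, giving independence. This is the same mechanism as in the weight $0$ case and I expect it to be routine once the constant term of $\dhtE_2$ is written down.

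The reverse inclusion — that $\dim V_2^m(0) \le m$, so the Taylor set is actually a basis — is where the $\xi$-operator enters and where I expect the main work. The idea: $\xi_2$ maps $V_2^m(0)$ into the space of weight $0$ forms, and $\xi_0$ maps weight $0$ forms to weight $2$; the composition relates to $\Delta$ via $\Delta_k = -\xi_{2-k}\circ\xi_k$ (up to sign/normalization consistent with the paper's conventions). One shows $\xi_2$ sends an $m$-harmonic weight $2$ form of moderate growth either to $0$ or to a weight $0$ form that is $(m-1/2)$- or $m$-harmonic of moderate growth, and analyzes the kernel: a weight $2$ form killed by $\xi_2$ is holomorphic, but there are no nonzero holomorphic modular forms of weight $2$ for $\PSL(2,\Z)$ that are of moderate growth (the space $M_2 = 0$), so $\xi_2$ is injective on $V_2^m(0)$. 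Combined with Theorem \ref{thm:wt0VectorSpace} bounding the target space and a dimension count of the image, this forces $\dim V_2^m(0) \le m$. One also has to verify $V_2^m(0) = V_2^{m+1/2}(0)$, which amounts to the statement that a weight $2$ form $f$ with $\Delta_2^{m+1/2}$-type annihilation — i.e. $\Delta_2^m f$ holomorphic of weight $2$ and moderate growth — must actually have $\Delta_2^m f = 0$, again because $M_2 = 0$; this is exactly why weight $2$ behaves like weight $0$ shifted by a half-step in the opposite direction.

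The main obstacle will be making the $\xi$-operator bookkeeping precise: tracking how moderate growth, harmonic depth, and the half-integer shift transform under $\xi_2$ and $\xi_0$, and ruling out that $\xi_2(V_2^m(0))$ could land outside the span controlled by Theorem \ref{thm:wt0VectorSpace}. I would handle this by computing $\xi_2(\dhtE_2(z,s))$ explicitly in terms of $\dhtE_0(z,\cdot)$ (there should be a clean identity, up to a scalar and possibly a swap $s \leftrightarrow 1-s$), so that on the level of Taylor bases $\xi_2(G_n)$ is an explicit combination of the $F_j$'s; this both pins down the image and re-proves injectivity concretely. The remaining pieces — Fourier expansions, the eigenvalue identity, linear independence of the $y$-monomials — are all parallel to the weight $0$ argument and I would cite or lightly adapt those computations rather than redo them.
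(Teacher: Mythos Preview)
Your overall strategy matches the paper's: $G_0\equiv 0$ from the factor $s(s+1)$, polyharmonicity of the $G_n$ from differentiating the eigenvalue relation, $V_2^{m+1/2}(0)=V_2^m(0)$ from $M_2=0$, and the upper bound via $\xi_2$. The paper differs only in minor technicalities: it proves linear independence by showing inductively that $G_n\in V_2^n(0)\setminus V_2^{n-1}(0)$ from the $\Delta_2$-recursion (rather than your constant-term argument), and it proves $\dim V_2^m(0)\le m$ by first establishing $\dim V_2^1(0)=1$ via $\xi_2$ and then inducting with $\Delta_2$ (rather than your direct injection $\xi_2:V_2^m(0)\hookrightarrow V_0^m(0)$). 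Your route is arguably cleaner, and both constant-term linear independence and $\ker\xi_2\cap V_2^m(0)=M_2=0$ are fine.

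There is one genuine gap in your plan. For the inequality $\dim V_2^m(0)\le \dim V_0^m(0)$ you need $\xi_2(V_2^m(0))\subseteq V_0^m(0)$, and in particular that $\xi_2$ preserves moderate growth for an \emph{arbitrary} element of $V_2^m(0)$. Your proposed fix---computing $\xi_2(\dhtE_2(z,s))$ in terms of $\dhtE_0$---only treats the specific functions $G_n$, which is circular: at this point you do not yet know that every element of $V_2^m(0)$ lies in the span of the $G_n$. The paper handles this (Lemma~\ref{lem:subspaceXi}) by taking an arbitrary $f\in V_2^m(0)$, writing its Fourier expansion in $s$-derivatives of Whittaker $W$-functions (Theorem~\ref{lem:abstractFourier}), applying $\xi_2$ term by term, and observing that the result is again a sum of $W$-Whittaker derivatives of weight $0$ type with rapid decay plus a polynomially bounded constant term. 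You should insert this argument; once you do, your injection $\xi_2:V_2^m(0)\to V_0^m(0)$ is legitimate and the rest of your outline goes through.
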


Again we call this a {\em Taylor basis} of $V_2^m(0)$. For example, the space $V_2^{1/2}(0) = \{ 0\}$.

To explore the relationship between these weight  0 and weight 2 forms it is convenient to introduce symmetrized versions of the 
Taylor coefficients $\{F_j\}$ and $\{G_j\}$. 
Define the {\em symmetrized Taylor basis} functions  

\begin{align}
\label{eqn:symmetric_F} \widetilde{F}_n(z) :=& (-1)^n \Big(F_n(z) +
 \sum_{\ell=1}^n { n+\ell \choose n} F_{n-\ell}(z)\Big)  \\
 \label{eqn:symmetric_G} 
\widetilde{G}_n(z):=&  \,G_n(z) + \sum_{\ell=1}^n (-1)^{\ell} { n+\ell \choose n} G_{n-\ell}(z).
\end{align}
These functions are obtained by  a triangular  change of basis from the Taylor bases
given above.

%**************
%
% Theorem 1.3 [Note the numbering of G_n]
%
%************

\begin{theorem}\label{thm:ladder}
For the symmetrical basis functions \eqref{eqn:symmetric_F} and \eqref{eqn:symmetric_G}, we have 
\begin{enumerate}
\item 
$\Delta_0 \wt{F}_n(z) = \wt{F}_{n-1}(z)$ 
and $\xi_{0} \wt{F}_{n}(z)  = \wt{G}_{n}(z)$
\item 
$\Delta_2 \wt{G}_n(z) = \wt{G}_{n-1}(z)$
   and $\xi_{2} \wt{G}_{n}(z) = \wt{F}_{n-1}(z)$.
\end{enumerate}
\end{theorem}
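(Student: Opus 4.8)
The plan is to reduce everything to two facts about the doubly-completed Eisenstein series — an eigenvalue equation and a ``lowering'' identity — and then let the triangular change of basis \eqref{eqn:symmetric_F}--\eqref{eqn:symmetric_G} do the bookkeeping. The starting point is that $E_k(z,s)$ is, up to the scalar-in-$s$ completing factors, a sum of weight-$k$ slashes of $y^s$, and $\Delta_k y^s = s(s+k-1)y^s$ with $\Delta_k$ commuting with $|_k\gamma$; hence
\[
\Delta_0\dhtE_0(z,s) = s(s-1)\dhtE_0(z,s),\qquad \Delta_2\dhtE_2(z,s) = s(s+1)\dhtE_2(z,s).
\]
Expanding in Taylor series about $s=0$ gives the three-term recursions $\Delta_0 F_n = F_{n-2}-F_{n-1}$ and $\Delta_2 G_n = G_{n-2}+G_{n-1}$ (indices ${<}0$ being $0$; $G_0=\dhtE_2(z,0)\equiv0$ by Theorem~\ref{th38}). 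The role of the definitions \eqref{eqn:symmetric_F}--\eqref{eqn:symmetric_G} is that, after the binomial identity $\tfrac1{(1-s)^{n+1}}=\sum_{\ell\ge0}\binom{n+\ell}{n}s^\ell$, they are residue extractions:
\[
\widetilde F_n(z) = -\,\mathrm{Res}_{s=0}\frac{\dhtE_0(z,s)}{\big(s(s-1)\big)^{n+1}},\qquad
\widetilde G_n(z) = \mathrm{Res}_{s=0}\frac{\dhtE_2(z,s)}{\big(s(s+1)\big)^{n+1}}.
\]
Since each $\dhtE_k$ is holomorphic at $s=0$, pushing $\Delta_k$ through the residue and cancelling one factor of $s(s\mp1)$ against the eigenvalue yields at once $\Delta_0\widetilde F_n=\widetilde F_{n-1}$ and $\Delta_2\widetilde G_n=\widetilde G_{n-1}$.

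For the $\xi$-operators I would first record the lowering identity on the Eisenstein series. For real $s$, $E_0(z,s)$ is real-valued, so $\xi_0 E_0(z,s)=2i\,\partial_z E_0(z,s)$; a termwise computation gives
\[
2i\,\partial_z\frac{y^s}{|cz+d|^{2s}} \;=\; s\,\frac{y^{s-1}}{(cz+d)^2\,|cz+d|^{2(s-1)}},
\]
so $\xi_0 E_0(z,s)=s\,E_2(z,s-1)$, and since the doubly-completing prefactor of $\dhtE_2$ at argument $s-1$ is precisely that of $\dhtE_0$ at $s$, this lifts to $\xi_0\dhtE_0(z,s)=s\,\dhtE_2(z,s-1)$; a parallel computation handles $\xi_2$. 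Combined with the functional equation of $\dhtE_2$ from Theorem~\ref{th38}, which re-centres the right-hand side about $s=0$, this produces a linear relation expressing each $\xi_0F_n$ through the $G_m$; feeding it into \eqref{eqn:symmetric_F}--\eqref{eqn:symmetric_G} and matching binomial coefficients then gives $\xi_0\widetilde F_n=\widetilde G_n$. The last identity is then essentially free: since $\Delta_k=\xi_{2-k}\circ\xi_k$ in Maass's normalization (a direct check from $\xi_k f=2iy^k\overline{\partial f/\partial\bar z}$ gives this with no sign, unlike the Bruinier--Funke convention), one has $\widetilde F_{n-1}=\Delta_0\widetilde F_n=\xi_2\xi_0\widetilde F_n=\xi_2\widetilde G_n$, and likewise $\Delta_2\widetilde G_n=\xi_0\xi_2\widetilde G_n=\xi_0\widetilde F_{n-1}=\widetilde G_{n-1}$, which re-derives the weight-$2$ Laplacian relation and checks internal consistency.

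I expect the main obstacle to be exactly the step that converts $\xi_0\dhtE_0(z,s)=s\,\dhtE_2(z,s-1)$ into a relation among the $F_n$ and $G_n$: the termwise differentiation is routine, but one must track the shift of the spectral parameter through the functional equation together with the $\Gamma$- and $\zeta$-normalizations, and it is this that is responsible for the structural asymmetries in the theorem — $G_0\equiv0$ on the one hand, and the index drop in $\xi_2\widetilde G_n=\widetilde F_{n-1}$ versus its absence in $\xi_0\widetilde F_n=\widetilde G_n$ on the other. An alternative that sidesteps the functional-equation bookkeeping is to read the four identities directly off the explicit Fourier expansions of $\widetilde F_n$ and $\widetilde G_n$ that must be in hand from proving Theorems~\ref{thm:wt0VectorSpace} and \ref{thm:wt2VectorSpace}, since $\Delta_k$ and $\xi_k$ act explicitly and compatibly on each Fourier mode (the terms $y^a(\log y)^b$ in the constant term and the Bessel terms $e^{2\pi i n x}K_\nu(2\pi|n|y)$).
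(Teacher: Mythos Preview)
Your approach is correct in its essential logic and takes a route genuinely different from the paper's. The paper proceeds by setting up a general machinery (its Theorem~\ref{thm:76}): it allows arbitrary triangular change-of-basis coefficients $c_{n,k,\ell}$, derives from the recursions of Proposition~\ref{lem:xiDelta_arbWeight_Taylor} a system of ``connection equations'' that guarantee the tower and ramp relations, and then simply checks that $c_{n,2,\ell}=\binom{n+\ell}{\ell}$ satisfies Pascal's rule. Your residue formulation
\[
\widetilde F_n=-\mathrm{Res}_{s=0}\frac{\dhtE_0(z,s)}{(s(s-1))^{n+1}},\qquad \widetilde G_n=\mathrm{Res}_{s=0}\frac{\dhtE_2(z,s)}{(s(s+1))^{n+1}}
\]
is an observation the paper does not make; it renders the $\Delta_k$-tower relations completely transparent (the eigenvalue cancels one power of the denominator) and reduces the $\xi$-relations to the single identity $\xi_0\dhtE_0(z,s)=\dhtE_2(z,-\bar s)$, bypassing the combinatorial recursion altogether. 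The paper's framework, on the other hand, is already packaged for all even weights $k$, which is what it needs for Theorem~\ref{thm:arbitrary-ramp}.

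One slip to correct: the doubly-completing prefactor of $\dhtE_2$ at argument $s-1$ is $s(s-1)\pi^{-s}\Gamma(s+1)$, which differs from that of $\dhtE_0$ at $s$ by the factor $\Gamma(s+1)/\Gamma(s)=s$. This extra $s$ absorbs the one coming from $\xi_0 E_0(z,s)=sE_2(z,s-1)$, so the correct lift (for real $s$) is $\xi_0\dhtE_0(z,s)=\dhtE_2(z,s-1)$, \emph{without} the spare factor of $s$. After the functional equation this is $\dhtE_2(z,-s)$, and matching Taylor coefficients gives $\xi_0 F_m=(-1)^m G_m$; feeding this into the real-coefficient combinations \eqref{eqn:symmetric_F}--\eqref{eqn:symmetric_G} yields $\xi_0\widetilde F_n=\widetilde G_n$ exactly as you intended. (The paper records the general-$s$ version $\xi_0\dhtE_0(z,s)=\dhtE_2(z,-\bar s)$ in Proposition~\ref{pr71}.) One further point worth making explicit: since $\xi_0$ is conjugate-linear, pushing it through a residue in $s$ turns it into a residue in $\bar s$; your Laurent coefficients are real, so this is harmless, but the residue picture for the $\xi$-step really amounts to the same Taylor-coefficient matching the paper performs.
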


This theorem can be summarized by the  picture in Figure 1.

\begin{figure}[h]
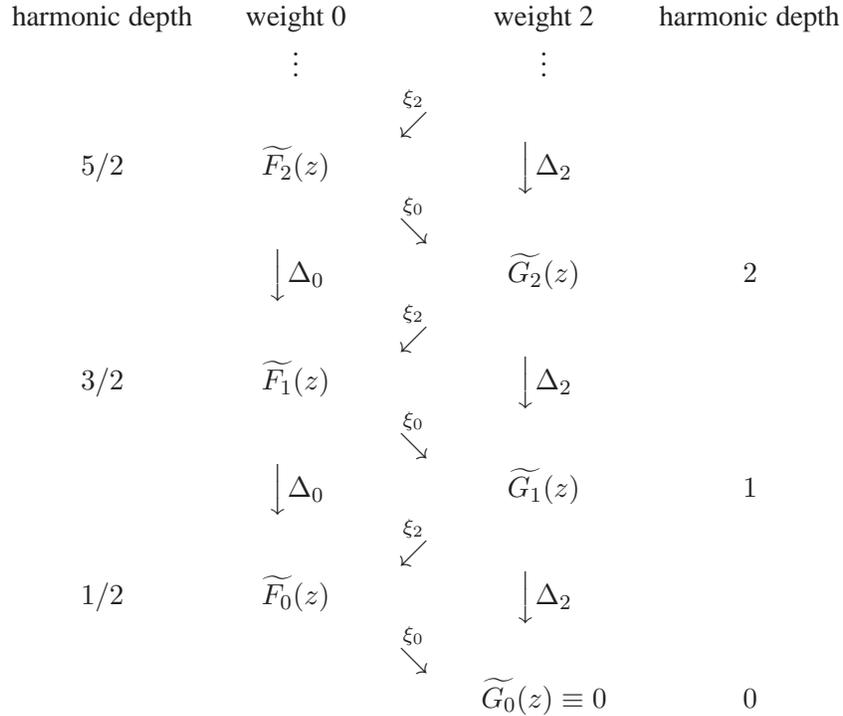

{\small 
\begin{equation*}\label{eqn:ladder}
\begin{array}{ccccccccc}
%\mbox{order}
\mbox{harmonic depth}&&\mbox{weight 0} && &&\mbox{weight 2} && \mbox{harmonic depth}\\
%& \mbox{order}\\
&&\vdots &&  && \vdots &&\\
&&&& \stackrel{\xi_2}{  \swarrow} &&&&\\
5/2 &&\widetilde{F_2}(z) &&  &&  \Big \downarrow \Delta_2&&\\
&& && \stackrel{\xi_0}{  \searrow}  &&&&\\ 
&&\Big \downarrow \Delta_0 & & & & \wt{G_2}(z) && 2\\
&&&& \stackrel{\xi_2}{  \swarrow}  &&&&\\
3/2 &&\widetilde{F_1}(z) &&  &&  \Big \downarrow \Delta_2 &&\\
&& && \stackrel{\xi_0}{  \searrow}  &&&&\\ 
&&\Big \downarrow \Delta_0 & & & & \wt{G_1}(z) && 1\\
&&&& \stackrel{\xi_2}{  \swarrow}  &&&&\\ 
1/2 && \wt{F_0}(z)  &&  && \Big \downarrow \Delta_2&& \\
 &&&& \stackrel{\xi_0}{  \searrow}  &&&&\\ 
&&  & & & & \wt{G_0}(z) \equiv 0  && 0 \\
\end{array}
\end{equation*}
}
\caption{Tower and ramp structure for weights $0$ and $2$}
\end{figure} \label{fig1}

One may view this picture as a set of two ``towers"
interconnected by a set of ``ramps" which are given by
the operators  $\xi_k$ and $\xi_{2-k}$.
The  ``ramp"  structure incorporates  well-known factorizations of the unshifted Laplacians
\begin{equation}\label{eqn:factor}
\Delta_k = \xi_{2-k}  \xi_k,
% \,\, \quad \mbox{and} \quad \Delta_2= \xi_0  \xi_2,
\end{equation}
which themselves move down the towers.
The bottom elements of the towers of symmetrized Taylor bases are well-known. We have
\begin{align}
\wt{F}_0(z) =& \frac{1}{2} \label{eqn:F0} \\
\wt{F}_1(z) =& -\frac{1}{2} \gamma +\log(4 \pi) + \log (\sqrt{y}|\Delta(z)|^{\frac{1}{12}}),
%%%%%%%%%%%%%%%%%%%%%%%%%%%%%%%%%%%%%%%%%%%%%%%
% OLD VERSION
%  \frac{1}{2} + \gamma + \log(4\pi) + \log\( y \abs{\Delta(z)}^{\frac{1}{6}} \) \label{eqn:F1}
%%%%%%%%%%%%%%%%%%%%%%%%%%%%%%%%%%%%%%%%%%%%%%%%%
\end{align}
where $\gamma$ is Euler's constant. The second of these formulas is essentially equivalent to 
the ``$s=0$"
version of Kronecker's first limit formula for the Eisenstein series $E_0(z,s)$.
%%%%%%%%%%%%%%%%%%%%%%%%%%%%%
%given in Stark \cite{Sta77}.
%[Check the Stark reference]
%%%%%%%%%%%%%%%%%%%%%%%%%%%%%
In addition we have 
\begin{align}
\wt{G}_0(z) =& 0 \label{eqn:G0} \\
%%%%%%%%%%%%%%%%%%%%%%%%%%%%%%%%%%%%%%%%%%%%%%%
% OLD VERSION see Lemma 7.6
%  -\frac{1}{6} \( 1- 24\sum_{n=1}^\infty \sigma_1(n) e(nz) - \frac{3}{\pi y}\), \label{eqn:G0}
%%%%%%%%%%%%%%%%%%%%%%%%%%%%%%%%%%%%%%%%%%%%%%%%%
\wt{G}_1(z) =&-\frac{\pi}{6} 
-\frac{1}{2y}  +4 \pi \big( \sum_{n=1}^{\infty}\sigma_1(n)e^{2 \pi i n z}\big)
%%%%%%%%%%%%%%%%%%%%%%%%%%%%%%%%%%%%%%%%%%%%%%%
% NO OLD VERSION: See Lemma 7.7
%  % Duke imamoglu-Toth  require the negative sign for $1/2y$
%%%%%%%%%%%%%%%%%%%%%%%%%%%%%%%%%%%%%%%%%%%%%%%%%
\end{align}
which is related to the  Fourier series expansion\footnote{The calculation here currently differs
in the sign of $\frac{1}{2y}$ from \cite[(3.7) ]{DIT13}.}
 for $E_2(z, s)$.   
In the above formulas
  $\Delta(z)$ is the weight $12$ cusp form for $\SL(2, \Z)$ given by 
$\Delta(z) = e(z ) \prod_{n=1}^\infty \( 1- e(nz)\)^{24}$, where $e(x) := e^{2\pi i x}$,
and $\sigma_1(n) = \sum_{d\mid n} d$.
See Lemmas  \ref{le27} and \ref{le37}   for details. 

%%%%%%%%%%%%%%%%%%%%%%%%%%%%%%%%%%%%%
%\begin{remark}
%Unlike the situation of even integer weights larger than $2$,  
%there is no holomorphic weight $2$ Eisenstein series for $\SL(2,\Z)$. 
%Instead the analogous Eisenstein series \eqref{eqn:doublycompletedWt2} 
%has harmonic depth $1$. 
%For weight $0$  the constant functions 
%are holomorphic  modular forms and are assigned
%harmonic depth $\frac{1}{2}$. 
%\end{remark}
%%%%%%%%%%%%%%%%%%%%%%%%%%%%%%%%%%%%%

Complementing this ``tower" structure  given by the Laplacians $\Delta_k$ is the operator
$\frac{\partial}{\partial s} $, which  allows movement up the ``tower''.  
More precisely,   the Taylor coefficients are given by
 $F_n(z) = \frac{1}{n!}\, \frac{\partial^n}{\partial s^n} \dhtE(z,s) \mid_{s=0}.$
To move up the ``tower'' one level at a time, set 
$$
 {\widehat{\widehat{E_0^{(n)}}}}(z,s) := \frac{1}{n!} \(\frac{\partial}{\partial s}\)^n \dhtE_0(z,s).
$$
Then we have $F_n(z) = {\widehat{\widehat{E_{0}^{(n)}}}}(z,0)$ and
 $$F_{n+1}(z) = \frac{1}{(n+1)} \frac{\partial}{\partial s} \widehat{\widehat{E_0^{(n)}}}(z,s) \big |_{s=0}.$$
Hence, there  a sort of ``switching'' between the differential operator $\frac{d}{ds}$
and the Laplacian operator $\Delta_{k}$ moving down the tower.

There are a number of references in the literature where the differentiation with respect to $s$ has appeared in 
the context of Eisenstein series and automorphic forms. For instance, the works of Kudla-Rapoport-Yang \cite{Ku04,KRY04,KY10, yang04, yangSecond},
  Duke-Imamoglu-Toth \cite{DIT1, DIT2} 
 and Duke and Li \cite[page 2]{DL15} contain such results.  

While harmonic Maass forms have played a significant role in the emerging theory of
Ramanujan's mock theta functions (see \cite{onoCD}, for example), there are 
few examples of $m$-harmonic
Maass forms for $m>1$.  One exception is the appearance of 
a 2-harmonic form in the work of Bringmann-Diamantis-Raum \cite{BDR}. Their work  
is related to non-critical  modular $L$-values.

%***************************
% Subsection 1.2
%***************************
\subsection{Shifted  and arbitrary integer weight polyharmonic Maass forms}\label{sec12}
In Section \ref{sec:Weight} we state
%inite-dimensionality 
corresponding results 
for  polyharmonic Maass forms (i.e. $\lambda=0$) for all even integer 
weights $k$ and all harmonic depths $m \ge \frac{1}{2}$.
This  case is  complicated by the presence of holomorphic cusp forms, which appear at
harmonic depth  $1/2$. 
We determine the dimension for each $(k,m)$.  An important feature  is 
Proposition \ref{prop:liftOfCuspForms}, which implies that any nonzero holomorphic
cusp form is not the  image under the operator $\xi_{2-k}$ of any  harmonic depth 
$m =3/2$ polyharmonic Maass form, hence not the image 
under  $\Delta_k$ of a harmonic depth $2$
Maass form. 
%%%%%%%%%%%%%%%
%Moreover, no  new polyharmonic forms appear of exact depth $m + \frac{1}{2}$ for $m \ge 1$.
%from cusp forms other than those that are lifts of holomorphic Eisenstein series.
%%%%%%%%%%%%%%%%
This result  shows that  polyharmonic Maass forms 
are of  value in  understanding 
 classical modular forms. Namely, for $k \ge 4$ it 
 provides a   characterization of  holomorphic
Eisenstein series  viewed inside the vector space $M_k$ of holomorphic modular forms,
as follows.

%%%%%%%%%%%%%%%%%%%%%%%%%%
% PROP 1.4 (HOL. EIS. CHARCTRZN)
%%%%%%%%%%%%%%%%%%%%%%%%%
\begin{proposition}\label{prop:14}
 For even weight $k \ge 4$ the one-dimensional space of holomorphic Eisenstein series $E_k^1(0) \subset V_k^1(0)$ is  the range 
 of the Laplacian $\Delta_k$ acting on the space $V_k^2(0)$  of 
depth $2$ polyharmonic Maass forms having  moderate growth at the cusp. 
\end{proposition}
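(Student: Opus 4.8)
The plan is to leverage the structural results already assembled for weight $k$ polyharmonic Maass forms (the analogues of Theorems \ref{thm:wt0VectorSpace}--\ref{thm:ladder} for general even weight, stated in Section \ref{sec:Weight}) together with the factorization $\Delta_k = \xi_{2-k}\,\xi_k$ from \eqref{eqn:factor} and the key non-liftability statement Proposition \ref{prop:liftOfCuspForms}. First I would recall that for $k \ge 4$ the space $V_k^1(0)$ of harmonic depth $1$ forms of moderate growth consists precisely of the holomorphic modular forms $M_k$ (there are no genuinely harmonic, non-holomorphic depth-$1$ forms of moderate growth at this weight, since $\xi_k$ would land such a form in a holomorphic form of negative weight $2-k < 0$, forcing it to vanish, so the form itself is holomorphic). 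Inside $M_k$ sits the one-dimensional space $E_k^1(0)$ spanned by the holomorphic Eisenstein series $E_k$, complementary to the cusp forms $S_k$.

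Next I would establish the inclusion $\Delta_k\big(V_k^2(0)\big) \subseteq E_k^1(0)$. For $f \in V_k^2(0)$, the form $g := \Delta_k f$ lies in $V_k^1(0) = M_k$ by definition of harmonic depth $2$. The point is that $g$ cannot have a nonzero cusp-form component: writing $g = c\,E_k + h$ with $h \in S_k$, the form $h = g - c\,E_k$ would itself be $\Delta_k$ of a depth-$2$ form (after subtracting off a depth-$2$ preimage of $c E_k$, which exists by the reverse direction below), i.e. a holomorphic cusp form in the image of $\Delta_k$ acting on depth-$2$ forms of moderate growth. But Proposition \ref{prop:liftOfCuspForms} asserts exactly that no nonzero holomorphic cusp form arises this way — equivalently, no nonzero cusp form is $\xi_{2-k}$ of a depth-$3/2$ form, hence none is $\Delta_k = \xi_{2-k}\xi_k$ of a depth-$2$ form. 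Therefore $h = 0$ and $g \in E_k^1(0)$.

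For the reverse inclusion $E_k^1(0) \subseteq \Delta_k\big(V_k^2(0)\big)$, I would exhibit an explicit depth-$2$ preimage of $E_k$. This is where the $s$-derivative of the Eisenstein series enters: the general-weight analogue of the doubly-completed Eisenstein series $\dhtE_k(z,s)$ has Taylor coefficients in $s$ that furnish a "Taylor basis" for $V_k^2(0)$ (cf. Theorems \ref{thm:wt0VectorSpace} and \ref{thm:wt2VectorSpace} and their weight-$k$ generalizations), and the ladder relation $\Delta_k \wt{F}_n = \wt{F}_{n-1}$ of Theorem \ref{thm:ladder}, transported to weight $k$, shows that applying $\Delta_k$ to the appropriate second Taylor coefficient (the symmetrized $\wt{F}_2$-analogue, a genuine depth-$2$ form of moderate growth) yields a nonzero scalar multiple of the depth-$1$ Eisenstein form $\wt{F}_1$-analogue, which is (up to normalization) $E_k$ itself. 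Since $\Delta_k\big(V_k^2(0)\big)$ is thus a nonzero subspace of the one-dimensional space $E_k^1(0)$, equality follows.

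The main obstacle is the first inclusion — specifically, cleanly deducing the cusp-form obstruction from Proposition \ref{prop:liftOfCuspForms}. One must be careful that the statement there is phrased in terms of $\xi_{2-k}$ on depth-$3/2$ forms, and translate it to a statement about $\Delta_k$ on depth-$2$ forms via the factorization $\Delta_k = \xi_{2-k}\xi_k$; the subtle part is checking that $\xi_k$ maps $V_k^2(0)$ into the depth-$3/2$ space $V_{2-k}^{3/2}(0)$ of forms of moderate growth (rather than producing growth at the cusp), so that the hypothesis of Proposition \ref{prop:liftOfCuspForms} genuinely applies. Once that compatibility is verified, the rest is bookkeeping with the known dimensions and the ladder structure.
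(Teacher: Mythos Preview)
Your proposal is correct and uses the same essential ingredients as the paper: Proposition~\ref{prop:liftOfCuspForms} for the cusp-form obstruction, and the Taylor-coefficient tower relations for the explicit preimage of $E_k$. The paper does not give a standalone proof of Proposition~\ref{prop:14}; it is presented as an immediate consequence of Theorem~\ref{thm:arbitraryweight}(2), which furnishes the decomposition $V_k^2(0) = E_k^2(0) \oplus S_k$. From that decomposition the result is a one-line computation: $\Delta_k$ annihilates $S_k$ (holomorphic forms) and maps the two-dimensional space $E_k^2(0) = \langle G_{0,k}, G_{1,k}\rangle$ onto $\langle G_{0,k}\rangle = E_k^1(0)$ via $\Delta_k G_{1,k} = (k-1)G_{0,k}$. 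Your argument instead bypasses Theorem~\ref{thm:arbitraryweight} and works directly with Proposition~\ref{prop:liftOfCuspForms}, which is exactly how the paper proves Theorem~\ref{thm:arbitraryweight} in the first place; so the two routes are really the same argument at different levels of packaging.

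Two small remarks. First, your ``main obstacle'' is over-stated: Proposition~\ref{prop:liftOfCuspForms}(1) is already phrased directly for $\Delta_k$ acting on modular forms of moderate growth, so no translation through $\xi_k$ and depth-$3/2$ spaces is required (and if you did want to go that route, Lemma~\ref{lem:subspaceXi} shows $\xi_k f \in V_{2-k}^2(0)$, and $\Delta_{2-k}(\xi_k f) = \xi_k(\Delta_k f) = 0$ since $\Delta_k f$ is holomorphic, so in fact $\xi_k f \in V_{2-k}^1(0)$ and Proposition~\ref{prop:liftOfCuspForms}(2) applies directly). Second, for weight $k \ge 4$ the paper's notation for the Taylor basis is $G_{n,k}$ (and $\wt{G}_{n,k}$), not $F$; the relevant preimage of $E_k$ is $G_{1,k}$, with $\Delta_k G_{1,k} = (k-1) G_{0,k}$ and $G_{0,k}$ a nonzero multiple of the holomorphic Eisenstein series.
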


 As already mentioned, a companion paper  \cite{LR15S} treats  shifted polyharmonic Maass
 forms satisfying $(\Delta_k -\lambda)^m f(z) =0$ for a fixed $\lambda \ne 0$.
  The case of   general $\lambda$ includes  Maass cusp forms,
  and via nonholomorphic
  Eisenstein series also has a connection with the Riemann zeta zeros,
  as observed in 1981 by Zagier \cite{Za81a}.
  The harmonic depth
 of such $f$ is always an integer $m$. 
The analogues of Figures 1 and 2 of this paper
 for $\lambda \ne 0$  have  ``towers"  with actions of  Laplacians
 $\Delta_k$ and $\Delta_{2-k}$,
however the ``ramps" are replaced by  rungs of a ``ladder"
at each fixed depth $m$, with the depth preserved by the
 action of  the operators $\xi_k$ and $\xi_{2-k}$.

%***************************
% Subsection 1.3
%***************************
\subsection{Roadmap}\label{sec13}
Section \ref{sec:Weight} states further main results 
 for polyharmonic Maass forms of arbitrary even integer weight
other than $0$ or $2$. 
Section \ref{sec:Background} 
contains known results on classical holomorphic modular forms 
(Section \ref{sec:Holomorphic}),
and basic facts on non-holomorphic Eisenstein series,
including their Fourier expansions and functional equations (Section \ref{sec:NHE}).
Section \ref{sec:newsec4} formulates results on  the Fourier expansions of polyharmonic
Maass forms, some based on \cite{LR15S}.
Section \ref{sec:newsec5} presents results about the 
Bruinier-Funke 
nonholomorphic differential operator $\xi_k$, showing
that it preserves spaces of functions of moderate growth at the cusp.
Section \ref{sec:newsec6} establishes for $\lambda =0$ that holomorphic cusp forms 
are not  the image under $\Delta_k$ of any bi-harmonic Maass form.
Section \ref{sec:7} computes the action of $\xi_k$ on non-holomorphic
Eisenstein series. 
 Section \ref{sec:TS} gives recursions for the Taylor series coefficients
 of $\dhtE_k(z; s_0)$ in the $s$-variable, which are functions of $z$.
 It shows these coefficients are polyharmonic Maass forms, 
 and determine  recursion relations they satisfy under the action of $\Delta_k$
 and $\xi_k$. Section \ref{sec:new9}  constructs modified bases of $V_k^m(0)$ 
 which satisfies simpler recursion relations with respect to $\Delta_k$ and $\xi_k$.
 Section \ref{sec:Proofs} presents proofs  of the main theorems of Section \ref{sec:Intro}, namely 
 Theorems \ref{thm:wt0VectorSpace}, \ref{thm:wt2VectorSpace}, and \ref{thm:ladder}. 
 Section \ref{sec:Proofs2}  completes the proofs of the main results of Section \ref{sec:Weight}.

%***************************
% Subsection 1.4
%***************************
\subsection{Notation}

We define the {\em completed Riemann zeta function} by
$$
%%%%%%%%%%
% abbreviation \zeta^{\ast}(s) 
%%%%%%%%%%
\htz(s) := \pi^{-s/2} \Gamma(s/2) \zeta(s).
$$
For a fixed $\gamma = \smatr{a}{b}{c}{d}$ we write $\gamma \cdot z = \frac{az+b}{cz+d}$. 

%%%%%%%%%%%%%%%%%
%%                                          %%
%%   2   GENERAL EVEN WEIGHT  %%
%%                                          %%
%%%%%%%%%%%%%%%%%
\section{General even integer weight polyharmonic Maass forms}\label{sec:Weight}
Section \ref{sec:Intro} presented results dealing with weight $0$ and weight $2$
 polyharmonic Maass forms. 
This section describes the analogous picture for arbitrary even integer weights. The scenario for arbitrary weight 
is complicated by  the presence of cuspidal holomorphic modular forms. 

%%%%%%%%%%%
% Definition 2.1
%%%%%%%%%%%%
\begin{definition}
A holomorphic modular form for $\SL_2(\Z)$ of weight $k \in \Z$ is a holomorphic function 
$f: \H \to \C$ satisfying 
$f \mid_k \gamma(z) = f(z)$, i.e. 
$$(cz+d)^{-k} f\( \frac{az+b}{cz+d}\) = f(z) \ \ \text{ for all } \ \ \smatr{a}{b}{c}{d} \in \SL_2(\Z)$$
and 
$f(z) = O(y^A) \ \ \text{ as } \ \ y \to \infty \ \ \text{ for some } A.$
Moreover, if $f(z) = O(y^{-k/2})$ as $y\to \infty$ $f$ is called a cusp form.  
\end{definition}
The set of all holomorphic modular forms is a vector space, denoted $M_k$. 
Moreover, let $S_k$ denote the space of all such cusp forms. With our convention
on half-integer harmonic depth we have
%\begin{remark}
$M_k =V_k^{\frac{1}{2}}$.
%\end{remark}

For $z\in \H$, an even integer $k$,  and $Re(s) >1$ define for $Re(s) > 1-k$ 
the (non-holomorphic) Eisenstein series
\begin{equation}\label{eqn:E_k_def_I}
E_k(z, s) := \frac{1}{2} \left( \sum_{\gamma = \smatr{a}{b}{c}{d}  \in \Gamma_\infty \backslash \SL_2(\Z)} \psi_s \mid_k \gamma(z)  \right)=
\frac{1}{2} \zeta(2s) \left(\sum_{{(c,d) \in \Z^2} \atop{ (c,d)=1}} \frac{y^s}{(cz+d)^{k}|cz+ d|^{2s}}\right),
\end{equation}
where  $\psi_s(z) = \im{z}^s$.  For $k \ge 4$ an even integer the value $s=0$ 
gives the unnormalized
holomorphic Eisenstein series
\begin{equation}\label{holo-ES}
E_k(z, 0) :=\frac{1}{2}  \sum_{(m,n) \in \ZZ^2 \backslash (0,0)} \frac{1}{(mz+n)^k}.
\end{equation}
The  Eisenstein series $E_k(z, s)$ meromorphically continues in the $s$-variable with a functional equation given
in terms of the  {\em completed non-holomorphic Eisenstein series}
\begin{equation}
%E_k^{*}
\htE_k(z, s) := \pi^{-(s + \frac{k}{2})} \Gamma(s +\frac{k}{2} + \frac{|k|}{2}) E_k(z, s).
\end{equation}
The functional equation is
\begin{equation*}
%E_k^{\ast}
\htE_k(z, s) = \htE_k(z, 1-k-s),
\end{equation*}
which has critical line $Re(s) = \frac{1-k}{2}.$ The function $\htE_k(z, s)$
is an entire function of $s$ for $k \ne 0$ and for $k=0$ it has simple poles at $s=0, 1$,
see Theorem \ref{th38}.

Define the {\em doubly completed (non-holomorphic) Eisenstein series}
\begin{equation}\label{eqn:E_k_def}
\dhtE_k(z,s) := 
(s+\frac{k}{2}) (s +\frac{k}{2}-1) \htE_k(z, s)
\end{equation}
which is an entire function of $s$.
By convention we denote the Taylor coefficients at $s=0$ for $k \in 2\ZZ$ by 
\begin{equation}\label{eqn:E_k_taylor_coefficients}
%E_k^{**}
\dhtE_k(z,s) = \begin{cases} \sum_{n=0}^\infty F_{n, k} (z) s^n  & \mbox{for weights} \,\,  k \le 0, \\ 
 \sum_{n=0}^\infty G_{n,k}(z) s^n & \mbox{for weights} \,\,  k \ge 2.\end{cases}
 \end{equation}
 For  weights $0$ and $2$ we have
  $F_{n,0}(z) = F_n(z)$ and $G_{n, 2}(z) = G_n(z)$ as given in
 Section \ref{sec:Intro}.
The cases $k=0, 2$ are distinguished by the property  that  
 the factor $(s+\frac{k}{2})(s+ \frac{k}{2}-1)$ in $\dhtE_k(z, s)$ has a zero at $s=0$,
leading to different behavior than the general case; this is one
reason we have treated them separately.  The  ``ramp" and  ``tower" structure is slightly
altered in the general case, as pictured below.

The following two  theorems contains results for general even integer weights $k \ne 0, 2$ which parallel Theorems \ref{thm:wt0VectorSpace}, 
\ref{thm:wt2VectorSpace}, and \ref{thm:ladder}. 

%%%%%%%%%%%
% Theorem 2.2
%%%%%%%%%%%%
\begin{theorem}\label{thm:arbitraryweight}
Let $m \ge 1$ be an integer, and $k \in 2\ZZ$ an even integer, with $k \ne 0$ or $2$.
\begin{enumerate}
\item[(1)]
 For an even integer $k\le -2$, $V_{k}^m(0)$ is $m$-dimensional. Moreover, 
$\{ F_{0, k}(z), \cdots, F_{m-1, k}(z)\}$ is a basis for $V_{k}^m(0)$.
In this case $V_k^{1/2}(0) = \{0\}$
and
$\dim(V_k^m(0)) = \dim (V_k^{m+ \frac{1}{2}}(0))$
for all $m \ge 0$.
\item[(2)]
For an even integer weight $k \ge 4$, 
$$V_k^m(0) = E_k^m(0)  + S_k $$
where $E_k^m(0)$ is an $m$-dimensional subspace spanned by 
$\{ G_{0, k}(z), \cdots, G_{m-1, k}(z)\}$. 
Moreover, $S_k$ consists of cusp forms 
and has dimension $\max([k/12]-1,0)$ 
if $k\equiv 2\pmod{12}$ and $[k/12] $ if $k\not\equiv 2 \pmod{12}$ and $[x]$ 
is the largest integer less than or equal to $x$. 
In this case
$V_k^{1/2}(0) = M_k$ has positive dimension, 
 containing the holomorphic Eisenstein series $G_{0,k}(z)$, and
 $\dim(V_k^m(0)) = \dim (V_k^{m- \frac{1}{2}}(0))$
 for all $m \ge 1$. 
\end{enumerate}
\end{theorem}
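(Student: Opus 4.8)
The plan is to prove Theorem~\ref{thm:arbitraryweight} by establishing the two building blocks in the order suggested by the companion structural results. First I would establish the basic structure of the Fourier expansion of any $f \in V_k^m(0)$ with moderate growth at the cusp. Applying $\Delta_k^m f = 0$ together with separation of variables on each Fourier mode $a_n(y)e^{2\pi i n x}$ reduces the constant term ($n=0$) to a solution of an $m$-fold iterated ODE whose moderate-growth solutions span a finite-dimensional space, and forces each nonzero mode to decay; this is the content of the Fourier-expansion results of Section~\ref{sec:newsec4}, which I would quote. The upshot is $\dim V_k^m(0) < \infty$, so the problem becomes exhibiting an explicit spanning set of the right size. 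The doubly-completed Eisenstein series $\dhtE_k(z,s)$ being entire (equation~\eqref{eqn:E_k_def}) and satisfying $\Delta_k \dhtE_k(z,s) = (\text{shift in } s)\,\dhtE_k(z,s)$-type relations — made precise in Section~\ref{sec:TS} — shows that the Taylor coefficients $F_{n,k}(z)$ (resp. $G_{n,k}(z)$) lie in $V_k^{n+1}(0)$; this gives the lower bound on dimension, provided these coefficients are linearly independent, which follows by inspecting their constant terms (the leading growth rates $y^s$ and $y^{1-k-s}$ differentiate at $s=0$ into distinct powers of $\log y$).

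For part~(1), $k \le -2$: here there are no holomorphic modular forms ($M_k = S_k = 0$ for $k<0$), so $V_k^{1/2}(0)=\{0\}$, and the moderate-growth ODE analysis of the constant term yields exactly an $m$-dimensional solution space — the factor $(s+\tfrac{k}{2})(s+\tfrac{k}{2}-1)$ in $\dhtE_k$ does not vanish at $s=0$ when $k\le -2$, so $\{F_{0,k},\dots,F_{m-1,k}\}$ are $m$ linearly independent elements of the $m$-dimensional space $V_k^m(0)$, hence a basis. The equality $\dim V_k^m(0) = \dim V_k^{m+1/2}(0)$ is then immediate from the absence of holomorphic forms: the half-step up the ladder (passing through $\xi_{2-k}$ applied to a weight $2-k$ form landing in a holomorphic form of weight $k$) contributes nothing.

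For part~(2), $k \ge 4$: decompose using the operator $\xi_k$ and its interaction with $\Delta_k = \xi_{2-k}\xi_k$ from \eqref{eqn:factor}. Any $f \in V_k^m(0)$ has $\xi_k f \in V_{2-k}^{m-1/2}(0)$ (Section~\ref{sec:newsec5} guarantees moderate growth is preserved), and since $2-k \le -2$, part~(1) controls that target space. The kernel of $\xi_k$ on $V_k^{1/2}(0)$ is exactly $M_k$. The key point — and I expect this to be the main obstacle — is showing that the cusp-form part $S_k$ sits as a genuine direct summand that is \emph{not} hit from above: this is precisely Proposition~\ref{prop:liftOfCuspForms}, asserting no nonzero cusp form is $\xi_{2-k}$ of a depth-$3/2$ form, equivalently not $\Delta_k$ of a depth-$2$ form. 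Granting that, one shows by induction on $m$ that $V_k^m(0) = E_k^m(0) \oplus S_k$ where $E_k^m(0) = \mathrm{span}\{G_{0,k},\dots,G_{m-1,k}\}$: the $G$'s climb the Eisenstein tower via the $\partial/\partial s$ mechanism and account for the non-cuspidal part at every level, while $S_k$ appears only at the bottom (depth $1/2$) and is carried up trivially but never enlarged, by Proposition~\ref{prop:liftOfCuspForms}. The dimension count for $S_k$ is the classical formula for $\dim S_k$. Finally $\dim V_k^m(0) = \dim V_k^{m-1/2}(0)$ because the half-step down from depth $m$ to depth $m-1/2$ lands in $M_k = V_k^{1/2}(0)$ at the bottom, which already contains the full $S_k$ and the bottom Eisenstein series $G_{0,k}$, matching the count $m + \dim S_k = (m) + \dim S_k$ on both sides once one checks the Eisenstein contribution is the same $m$-dimensional piece. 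I would close by assembling these into the stated formulas, deferring the detailed Fourier-coefficient bookkeeping to Sections~\ref{sec:TS}--\ref{sec:new9} and the full argument to Section~\ref{sec:Proofs2}.
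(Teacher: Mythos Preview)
Your overall architecture for part~(2) is sound and matches the paper: use $\xi_k$ to descend to weight $2-k$, invoke Proposition~\ref{prop:liftOfCuspForms} to prevent cusp forms from being hit, and induct. But there is a genuine gap in your treatment of part~(1).

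You claim that for $k\le -2$ ``the moderate-growth ODE analysis of the constant term yields exactly an $m$-dimensional solution space,'' and then match this against the $m$ linearly independent $F_{j,k}$. This is not correct as stated. Theorem~\ref{th41}(2a) gives a $2m$-dimensional space of admissible constant terms (the span of $(\log y)^j$ and $(\log y)^j y^{1-k}$ for $0\le j\le m-1$), not an $m$-dimensional one, and in any case bounding the constant term does not by itself bound $\dim V_k^m(0)$. The upper bound has to come from somewhere else. Reducing inductively via $\Delta_k$ (kernel $V_k^1(0)$) one needs $\dim V_k^1(0)=1$ for $k\le -2$, and \emph{this} base case is where the real work is; it does not follow from the Fourier/ODE analysis alone.

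The paper obtains that base case by reversing the order you propose: it first establishes part~(2) at depth $m=1$, i.e.\ $V_k^1(0)=M_k$ for $k\ge 4$, using only that there are no nonzero holomorphic forms of negative weight (apply $\xi_k$ to $f\in V_k^1(0)$, land in $V_{2-k}^1(0)$, observe the image is holomorphic of weight $2-k\le -2$, hence zero, so $f$ is holomorphic). Only then does it attack $V_{2-k}^1(0)$: for $f\in V_{2-k}^1(0)$ one has $\xi_{2-k}f\in V_k^1(0)=M_k=E_k^1(0)\oplus S_k$; subtracting a multiple of $F_{0,2-k}$ kills the Eisenstein part, leaving $\xi_{2-k}(h)=g\in S_k$, and Proposition~\ref{prop:liftOfCuspForms} forces $g=0$, whence $h$ is holomorphic of negative weight, so $h=0$. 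Thus $\dim V_{2-k}^1(0)=1$. In other words, Proposition~\ref{prop:liftOfCuspForms} is essential for part~(1) as well as part~(2), and the two parts must be proved together at the base level. Your proposal treats part~(1) as independent of Proposition~\ref{prop:liftOfCuspForms} and of part~(2), and that is the missing idea.
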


The next result describes the ``ramp" and ``tower" structure for weights $k \ge 4$
paired with dual weights $2-k \le -2$ using modified basis functions.
%%%%%%%%%%%
% Theorem 2.3
%%%%%%%%%%%%
\begin{theorem}\label{thm:arbitrary-ramp}
Let $m \ge 1$ be an integer, and $k \in 2\ZZ$ an even integer, with $k \ge 4$.
Then
\begin{enumerate}
\item There are modified basis functions  $\wt{G}_{n,k}(z)$ and $\wt{F}_{n,2-k}(z)$ such that the following
are true for each even integer $k\ge 2$.
\begin{enumerate}
\item {\rm (``Ramp" relations)} 
$$\xi_{k} \wt{G}_{n,k}(z) =(k-1) \wt{F}_{n-1,k}(z) \quad \mbox{and} \quad \xi_{2-k} \wt{F}_{n, 2-k}(z)= \wt{G}_{n, 2-k}(z).$$ 
\item {\rm  (``Tower" relations)} 
$$
\Delta_k \wt{G}_{n, k} (z)= (k-1)\wt{G}_{n-1,k}(z) \quad \mbox{and} \quad
 \Delta_{2-k} \wt{F}_{n, 2-k}(z) =  (k-1)\wt{F}_{n-1, 2-k}(z).
 $$
\end{enumerate}
\item One choice of modified functions takes the form
\begin{eqnarray*}
\wt{G}_{n,k}(z) &= & G_{n, k}(z) +\sum_{\ell=1}^n \frac{1}{(k-1)^{\ell}}{{n+\ell}\choose{n}}
G_{n-\ell,k}(z)\\
\wt{F}_{n,2-k}(z) & = & (-1)^n \Big( F_{n, k}(z) + 
(-1)^n \sum_{\ell=1}^n (-1)^{\ell}
\frac{1}{(k-1)^{\ell}}{{n+\ell}\choose{n}}F_{n-\ell,k}(z)\Big).
\end{eqnarray*}
\end{enumerate}
\end{theorem}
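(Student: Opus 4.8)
The plan is to obtain all four relations at once by realizing $\widetilde G_{n,k}$ and $\widetilde F_{n,2-k}$ as the Taylor coefficients at $s=0$ of suitably reparametrized generating functions built from $\dhtE_k$ and $\dhtE_{2-k}$, and then reading the relations off from how these transform under $\Delta_k$ and $\xi_k$. The two inputs I would assemble first are: (i) from \eqref{eqn:E_k_def} and the elementary identity $\Delta_k y^s=s(s+k-1)y^s$, extended in $s$ by the analytic continuation of Theorem~\ref{th38}, the fact that $\dhtE_k(z,s)$ is a $\Delta_k$-eigenfunction in $z$ with eigenvalue $\lambda_k(s)=s(s+k-1)=(k-1)s+s^2$, and likewise $\dhtE_{2-k}$ with eigenvalue $\lambda_{2-k}(s)=s\bigl(s-(k-1)\bigr)$; and (ii) from the term-by-term computation of $\xi_k$ on the Eisenstein series of Section~\ref{sec:7} together with the functional equation $\dhtE_{2-k}(z,u)=\dhtE_{2-k}(z,k-1-u)$ of Theorem~\ref{th38}, identities
\[
\xi_k\,\dhtE_k(z,s)=R_k(s)\,\dhtE_{2-k}(z,-s),\qquad
\xi_{2-k}\,\dhtE_{2-k}(z,s)=R_{2-k}(s)\,\dhtE_k(z,-s)
\]
for explicit scalar functions with $R_k(0)=0$, $R_k'(0)=k-1$, $R_{2-k}(0)=1$; the factorization $\Delta_k=\xi_{2-k}\xi_k$ of \eqref{eqn:factor} then forces the compatibility relation $R_k(s)R_{2-k}(-s)=\lambda_k(s)$. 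Expanding these identities in powers of $s$ reproduces the recursions among the $G_{n,k}$ and $F_{n,2-k}$ that Section~\ref{sec:TS} records.

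Next I would carry out the reparametrization. Put $b=k-1$, let $\psi(s)=s+O(s^2)$ be the unique power series with $\psi(0)=0$ solving $\lambda_k(\psi(s))=bs$ (for instance $\psi(s)=\frac b2\bigl(-1+\sqrt{1+4s/b}\,\bigr)$), and note that then automatically $\lambda_{2-k}(-\psi(s))=\psi(s)^2+b\psi(s)=bs$. For any power series $\phi$ with $\phi(0)=1$, put
\[
\widetilde{\mathcal G}_k(z,s):=\phi(s)\,\dhtE_k\bigl(z,\psi(s)\bigr),\qquad
\widetilde{\mathcal F}_{2-k}(z,s):=\frac{\phi(s)\,R_k(\psi(s))}{bs}\,\dhtE_{2-k}\bigl(z,-\psi(s)\bigr)
\]
(the second is a power series because $R_k(\psi(s))=bs+O(s^2)$), and let $\widetilde G_{n,k}$, $\widetilde F_{n,2-k}$ be the coefficients of $s^n$. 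Applying $\xi_k$ to $\widetilde{\mathcal G}_k$ and using input (ii) gives $\xi_k\widetilde{\mathcal G}_k=\phi(s)R_k(\psi(s))\,\dhtE_{2-k}(z,-\psi(s))=bs\,\widetilde{\mathcal F}_{2-k}$; applying $\xi_{2-k}$ to $\widetilde{\mathcal F}_{2-k}$ and using the compatibility relation at $\psi(s)$ gives $\xi_{2-k}\widetilde{\mathcal F}_{2-k}=\widetilde{\mathcal G}_k$; and the two tower relations then follow from \eqref{eqn:factor}, or directly from the eigenvalue equations since $\lambda_k(\psi(s))=\lambda_{2-k}(-\psi(s))=bs$. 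Comparing coefficients of $s^n$ turns these into the four relations of part~(1). The overall sign $(-1)^n$ in the $\widetilde F$'s is exactly the reflection $\psi\mapsto-\psi$ forced by the functional equation, and the replacement of $b=k-1$ by $1-k$ in the dual-weight formula reflects $\lambda_{2-k}(v)=v^2-bv$.

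For part~(2) I would then pin down the particular $\phi$ producing the displayed triangular formulas: expanding $\phi$ and $\psi$ and collecting the coefficient of $G_{j,k}$ in $\widetilde G_{n,k}$ reduces the assertion to a binomial (Catalan/ballot-type) summation identity, checked directly, and the same computation with $-\psi$ and $b\mapsto 1-k$ handles $\widetilde F_{n,2-k}$; any $\phi$ with $\phi(0)=1$ makes part~(1) hold, so part~(2) merely exhibits one choice. Since each $\widetilde G_{n,k}$ is $G_{n,k}$ plus a combination of $G_{j,k}$ with $j<n$ (and similarly for the $\widetilde F$'s), the change of basis is unitriangular; combined with Theorem~\ref{thm:arbitraryweight} this shows $\{\widetilde G_{0,k},\dots,\widetilde G_{m-1,k}\}$ spans the Eisenstein subspace $E_k^m(0)\subseteq V_k^m(0)$ and $\{\widetilde F_{0,2-k},\dots,\widetilde F_{m-1,2-k}\}$ is a basis of $V_{2-k}^m(0)$. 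The cusp forms in $S_k$ are holomorphic, hence annihilated by both $\xi_k$ and $\Delta_k$, and so play no role in any of the relations.

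I expect the genuine obstacle to be input (ii): computing $\xi_k$ explicitly on the completed Eisenstein series and, in particular, verifying the compatibility relation $R_k(s)R_{2-k}(-s)=\lambda_k(s)$ with the normalizations $R_k'(0)=k-1$, $R_{2-k}(0)=1$, since this is where the functional equation, the identity $\Delta_k=\xi_{2-k}\xi_k$, and the precise $\Gamma$-, $\pi$- and $\zeta$-factors hidden in the double completion $\dhtE_k$ all have to be reconciled. Once that identity is in hand, the remainder is formal power-series bookkeeping plus the unitriangularity observation.
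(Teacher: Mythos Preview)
Your proposal is correct and takes a genuinely different route from the paper. The paper's proof is a two-line reduction to Theorem~\ref{thm:76}: that theorem shows, by directly manipulating the two-term recursions of Proposition~\ref{lem:xiDelta_arbWeight_Taylor} (e.g.\ $\Delta_k G_{n,k}=(k-1)G_{n-1,k}+G_{n-2,k}$), that any triangular combination with coefficients $c_{n,k,\ell}$ obeying the ``connection equations'' $c_{n,k,\ell}=\frac{1}{k-1}c_{n,k,\ell-1}+c_{n-1,k,\ell}$ satisfies the ramp and tower relations; the proof of Theorem~\ref{thm:arbitrary-ramp} then just checks that $c_{n,k,\ell}=\frac{1}{(k-1)^\ell}\binom{n+\ell}{\ell}$ solves this recursion. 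Your approach instead linearizes the eigenvalue: you substitute $s\mapsto\psi(s)$ so that $\lambda_k(\psi(s))=(k-1)s$, whence $\Delta_k$ acts on $\phi(s)\dhtE_k(z,\psi(s))$ as multiplication by $(k-1)s$, and the single-step tower relation is immediate upon extracting coefficients; the ramp relations follow the same way from Proposition~\ref{pr71}, which is exactly your input~(ii) with $R_k(s)=s(s+k-1)$ and $R_{2-k}\equiv 1$.

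What each approach buys: the paper's is entirely elementary---no power-series inversion, just checking a Pascal-type recursion---and it makes the family of admissible triangular bases explicit as solutions of the connection equations (cf.\ the Catalan and binomial boundary conditions in Tables~\ref{tab71} and~\ref{tab72}). Yours is more structural: it explains \emph{why} a one-step recursion exists (the quadratic $\lambda_k$ can be reparametrized to a linear function of the new variable), and it makes the freedom in part~(1) transparent as the arbitrary prefactor $\phi$. One caveat: $\xi_k$ is conjugate-linear, so when you write $\xi_k\bigl(\phi(s)\dhtE_k(z,\psi(s))\bigr)$ you are implicitly using that $\phi$, $\psi$, and $R_k$ have real coefficients (equivalently, restricting to real $s$); this is harmless here since the $c_{n,k,\ell}$ are real, but it is worth saying explicitly where you invoke it.
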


The modified basis functions are not unique; their form is classified in Theorem \ref{thm:76}.  Figure  
\ref{fig2}
pictures schematically the ``tower" and ``ramp" structure for 
 weights $k \ge 4$ paired with  dual weights $2-k \le -2$.
 
\begin{figure}
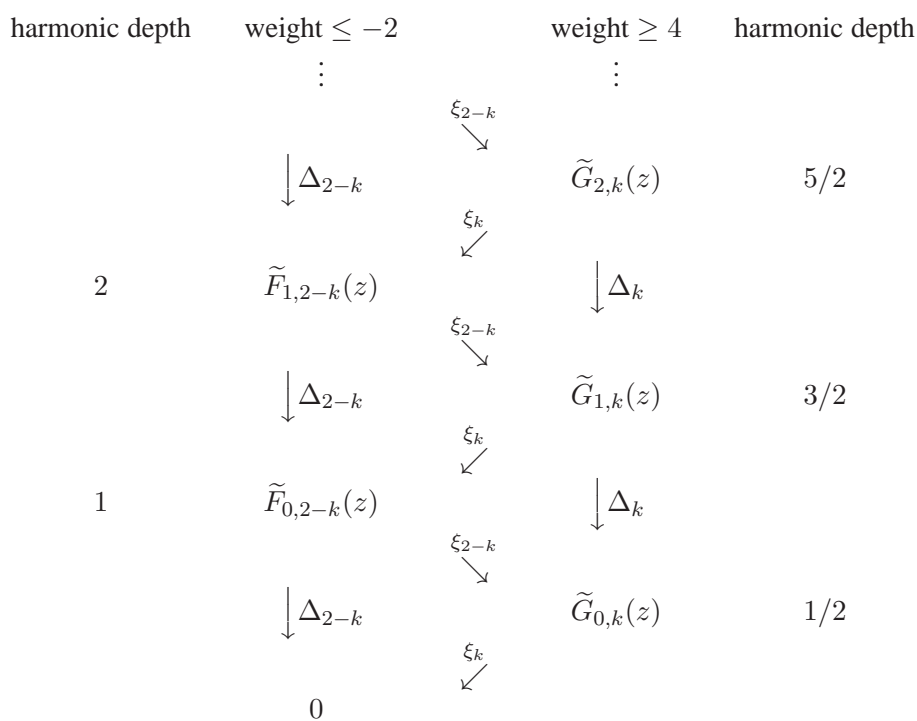

{\small 
\begin{equation*}\label{eqn:ladder2}
\begin{array}{ccccccccc}
\mbox{harmonic depth}&&\mbox{weight  $ \le -2$} && &&\mbox{weight $\ge 4$} && \mbox{harmonic depth}\\
&&\vdots &&  && \vdots &&\\
&& && \stackrel{\xi_{2-k}}{  \searrow}  &&&&\\ 
&&\Big \downarrow \Delta_{2-k} & & & & \wt{G}_{2,k}(z) && 5/2\\
&&&& \stackrel{\xi_k}{  \swarrow} &&&&\\
2 &&\widetilde{F}_{1, 2-k}(z) &&  &&  \Big \downarrow \Delta_k&&\\
&& && \stackrel{\xi_{2-k}}{  \searrow}  &&&&\\ 
&&\Big \downarrow \Delta_{2-k} & & & & \wt{G}_{1,k}(z) && 3/2\\
&&&& \stackrel{\xi_k}{  \swarrow}  &&&&\\
1 &&\widetilde{F}_{0, 2-k}(z) &&  &&  \Big \downarrow \Delta_k &&\\
&& && \stackrel{\xi_{2-k}}{  \searrow}  &&&&\\ 
&&\Big \downarrow \Delta_{2-k} & & & & \wt{G}_{0,k}(z) && 1/2\\
&&&& \stackrel{\xi_k}{  \swarrow}  &&&&\\ 
 && 0\,\, &&  &&&& \\
&&  & & & &  && \\
\end{array}
\end{equation*}
}
\caption{Tower and ramp structure for weights $k$ and $2-k$ for $ k \ge 4$}\label{fig2}
\end{figure}

For even weights $k \ge 4$  the  holomorphic Eisenstein series 
$\wt{G}_{0,k}(z) = c_k E_k(z)$
occurs at harmonic depth $1/2$.
This  picture is  reversed from the case of weight $k=2$, paired with dual weight $0$.
These results are  proved in Section \ref{sec:Proofs2}.

%%%%%%%%%%%%%%%%%
%%                                         %%
%%     3   BACKGROUND       %%
%%                                         %%
%%%%%%%%%%%%%%%%%
\section{Background} \label{sec:Background}

This section recalls 
well known results on holomorphic modular forms for $\SL_2(\Z)$,
 and presents results  on Maass's Eisenstein series as discussed in 
Maass \cite{Maa83}, and  parallel properties for the
non-holomorphic Eisenstein series $E_k(z, s)$ considered here.

%%%%%%%%%%%%%%%%%
%%                                         %%
%%     3.1   Holomorphic Modular forms       %%
%%                                         %%
%%%%%%%%%%%%%%%%%

\subsection{Classical Holomorphic Modular Forms for $\SL_2(\Z)$}\label{sec:Holomorphic}
This section collects some well known results concerned with 
holomorphic modular forms for $\SL_2(\Z)$. 
We reference Zagier's article 
\cite{Za08}, however there are many good references for these results. 

%%%%%%%%%
% Theorem 3.1 (formerly 3.3)
%%%%%%%%%%
\begin{theorem}[Corollary p.15 of \cite{Za08}]\label{thm:classicalDimension}
For $k < 0$ and for $k$ odd, $\dim(M_k) = 0$. 
For $k\ge 0$ even 
$$\dim(M_k) = \begin{cases} \left[ \frac{k}{12} \right] + 1 & k \not\equiv 2 \pmod{12} \\ 
     \left[ \frac{k}{12} \right]  & k \equiv 2 \pmod{12} \end{cases}$$
where $[x]$ is the largest integer less than or equal to $x$. 
\end{theorem}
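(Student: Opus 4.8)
The statement is the classical dimension formula for $M_k$; the natural plan is the standard one, resting on the valence (``$k/12$'') formula followed by an induction in steps of $12$. I would first clear the trivial cases. For odd $k$, applying the weight-$k$ modularity relation to $-I \in \SL_2(\Z)$ gives $f(z) = (-1)^{-k}f(z) = -f(z)$, so $f \equiv 0$ and $\dim M_k = 0$. For $k = 0$, modular invariance together with holomorphy and moderate growth forces $f$ to descend to a bounded holomorphic function on the compact Riemann surface $\SL_2(\Z)\backslash\H^{*}$, hence to be constant, so $\dim M_0 = 1 = [0/12]+1$, agreeing with the asserted formula.

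The engine of the proof is the valence formula: for every nonzero $f \in M_k$,
$$
\ord_\infty(f) + \tfrac{1}{2}\ord_i(f) + \tfrac{1}{3}\ord_\rho(f) + \sum_{P}\ord_P(f) = \frac{k}{12},
$$
where $\rho = e^{2\pi i/3}$ and the last sum runs over the remaining $\SL_2(\Z)$-orbits of points of $\H$. I would prove this by integrating $\tfrac{1}{2\pi i}\tfrac{f'}{f}$ around the boundary of the standard fundamental domain, truncated by a horizontal segment high in the cusp and with small circular arcs excised about $i$, $\rho$, and $\rho+1$: the horizontal segment contributes $\ord_\infty(f)$ via the $q$-expansion, the two vertical sides cancel under $z \mapsto z+1$, the two unit-circle arcs combine via $f(-1/z) = z^{k}f(z)$ to yield the term $k/12$ in the limit of vanishing excised arcs, and the small arcs about the elliptic points of orders $2$ and $3$ produce the fractional terms $\tfrac12\ord_i(f)$ and $\tfrac13\ord_\rho(f)$. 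This contour computation---and the careful bookkeeping at the elliptic fixed points and the cusp---is the one genuinely non-routine step; everything after it is linear algebra and arithmetic.

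From the valence formula I would extract four consequences. (i) For $k < 0$ the left side is a sum of non-negative numbers equal to $k/12 < 0$, so $M_k = 0$; likewise $M_2 = 0$, because $k/12 = 1/6$ cannot be written as such a sum (each term is $0$ or at least $1/3$). (ii) For even $k \ge 4$ the holomorphic Eisenstein series $E_k = E_k(z,0)$ of \eqref{holo-ES} lies in $M_k$ with nonzero constant term $\zeta(k)$; since a holomorphic modular form of positive weight is a cusp form exactly when its $q$-expansion has zero constant term, this gives $M_k = \C E_k \oplus S_k$ and hence $\dim M_k = 1 + \dim S_k$. (iii) The discriminant $\Delta(z) = e^{2\pi i z}\prod_{n \ge 1}(1 - e^{2\pi i n z})^{24}$ lies in $S_{12}$ with $\ord_\infty(\Delta) = 1$, so the valence formula applied to $\Delta$ forces its divisor to be supported only at the cusp; thus $\Delta$ is non-vanishing on $\H$ and $g \mapsto g\Delta$ is an isomorphism $M_{k-12} \xrightarrow{\ \sim\ } S_k$ (injective since $\Delta \ne 0$; surjective because for $h \in S_k$ the quotient $h/\Delta$ is holomorphic on $\H$ of weight $k-12$ with $\ord_\infty(h/\Delta) = \ord_\infty(h) - 1 \ge 0$). (iv) For $k < 12$ one has $S_k = 0$, since a nonzero cusp form has $\ord_\infty \ge 1$, forcing $k/12 \ge 1$.

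Combining (ii)--(iv) yields the recursion $\dim M_k = 1 + \dim M_{k-12}$ for every even $k \ge 4$, with $\dim M_{k-12} := 0$ when $k - 12 < 0$. Together with the base values $\dim M_0 = 1$, $\dim M_2 = 0$, and $\dim M_4 = \dim M_6 = \dim M_8 = \dim M_{10} = 1$ (each is $\ge 1$ by (ii) and $\le 1$ since $S_k = 0$ for $k < 12$), a finite induction in steps of $12$ gives the closed form: the residue $k \bmod 12$ is preserved by the recursion and $[k/12] = 1 + [(k-12)/12]$, so it suffices to check the formula for the two residue classes $k \equiv 2$ and $k \not\equiv 2 \pmod{12}$ on the base values and then propagate. (An alternative is to prove $\bigoplus_k M_k = \C[E_4, E_6]$ with $E_4, E_6$ algebraically independent and then count $\#\{(a,b) \in \Z_{\ge 0}^{2} : 4a + 6b = k\}$; this route also rests on the valence formula.) The sole real obstacle is the valence formula itself.
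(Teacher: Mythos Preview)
Your proof is correct and follows the standard route via the valence formula, exactly as in Zagier's exposition \cite{Za08}. Note, however, that the paper itself does not give a proof of this theorem at all: it is stated as a quotation (``Corollary p.15 of \cite{Za08}'') in a section explicitly devoted to collecting well-known background results, and no argument is supplied. So there is nothing in the paper to compare your proposal against; you have simply reproduced the standard proof that the authors are citing.
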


A source of holomorphic modular forms are the Eisenstein series. For $k \in 2 \ZZ$ with $k \ge 4$
the (normalized) holomorphic Eisenstein series is
\begin{equation}\label{eqn:holomorphic_eisenstein}
E_{k}(z) 
= \frac{1}{2} \sum_{\begin{subarray}{c} c, d \in \Z \\ (c,d) = 1 \end{subarray}}  \frac{1}{(cz+d)^{k}}.
\end{equation}
Clearly $E_{k}(z) = 1 + o(1) \ne 0$ as $y\to \infty$. 

%%%%%%
% Prop. 3.2 (formerly 3.4)
%%%%%%%
\begin{proposition}[Proposition 4, p.16 of \cite{Za08}]\label{prop:classicalFreelyGenerated}
The ring of holomorphic modular forms $M_\star  = \bigoplus_{k \in 2\ZZ} M_k$
for $SL(2, \ZZ)$  is freely  generated 
under multiplication by $E_4(z)$ and $E_6(z)$. 
Moreover, $M_k = E_k + S_k$ with $S_k$ being the space of cusp forms and  $\dim(E_k) = 1$
for $k=0$ and $k \ge 4$ and is zero otherwise. 
\end{proposition}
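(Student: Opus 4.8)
The plan is to reduce the ring structure of $M_\star$ to the structure of the cusp subspaces $S_k$ via multiplication by the discriminant $\Delta(z) = e(z)\prod_{n\ge 1}(1 - e(nz))^{24}$, and to combine this with the dimension formula of Theorem~\ref{thm:classicalDimension}. First I would settle the decomposition $M_k = E_k + S_k$ with $\dim E_k$ as claimed. Applying the weight $k$ modular relation to $\gamma = -I$ shows $M_k = 0$ for odd $k$; a holomorphic function on $\SL_2(\Z)\backslash\H$ that is bounded at the cusp is constant, so $M_0 = \C$ and $S_0 = 0$; and the series $E_k(z)$ of \eqref{eqn:holomorphic_eisenstein} is, for even $k \ge 4$, absolutely convergent, weight $k$ modular, and equal to $1 + o(1)$ as $y \to \infty$. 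Hence $E_k \notin S_k$, and for $f \in M_k$ with constant Fourier coefficient $a_0$ the form $f - a_0 E_k$ has vanishing constant term, so it is $O(e(z)) = O(y^{-k/2})$ and lies in $S_k$. Therefore $M_k = \C E_k \oplus S_k$ for even $k \ge 4$, with $M_0 = \C E_0$, $M_2 = S_2$, and $M_k = S_k$ for the other weights; in particular $\dim E_k = 1$ precisely for $k = 0$ and for even $k \ge 4$, and $\dim E_k = 0$ otherwise.

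Next I would note that each factor $1 - e(nz)$ is nonzero on $\H$ (since $|e(nz)| < 1$) and $e(z) \ne 0$, so $\Delta$ is a weight $12$ cusp form that vanishes nowhere on $\H$. Consequently, for every even $k$, multiplication by $\Delta$ is an isomorphism $M_{k-12} \to S_k$: injectivity is clear, and given $f \in S_k$ the quotient $f/\Delta$ is holomorphic on $\H$, transforms with weight $k - 12$, and --- since $f = O(e(z))$ while $\Delta = e(z)\,(1 + O(e(z)))$ near the cusp --- extends holomorphically to $\infty$, so $f/\Delta \in M_{k-12}$.

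Now I would prove $M_\star = \C[E_4, E_6]$ by induction on the weight. For $k < 4$ the claim is immediate from Theorem~\ref{thm:classicalDimension} ($M_k = 0$, or $M_0 = \C$). For even $k \ge 4$, choose nonnegative integers $a, b$ with $4a + 6b = k$ (solvable for every such $k$); then $E_4^a E_6^b \in M_k$ takes the value $1$ at the cusp, so $f - a_0 E_4^a E_6^b \in S_k = \Delta\, M_{k-12}$, and writing $\Delta = \tfrac{1}{1728}(E_4^3 - E_6^2)$ --- the standard relation, forced by $\dim S_{12} = 1$ together with a comparison of $e(z)$-coefficients --- the inductive hypothesis for $M_{k-12}$ shows $f$ is a polynomial in $E_4$ and $E_6$. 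For algebraic independence, the number of weight $k$ monomials $E_4^a E_6^b$, i.e. of nonnegative integer solutions of $4a + 6b = k$, has generating function $\bigl((1-t^2)(1-t^3)\bigr)^{-1}$ in the half-weight variable $t$, which equals $\sum_{n \ge 0} \dim M_{2n}\, t^n$ by Theorem~\ref{thm:classicalDimension}; hence in each weight the spanning set of monomials has cardinality $\dim M_k$ and is therefore a basis, leaving no nontrivial linear relation among monomials of a fixed weight. A general polynomial relation $P(E_4, E_6) = 0$ decomposes into its weighted-homogeneous parts, each lying in a single $M_k$, so each part and hence $P$ vanishes; thus $E_4$ and $E_6$ are algebraically independent and generate $M_\star$ freely.

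The only genuinely substantive ingredient is the valence ($k/12$) formula underlying Theorem~\ref{thm:classicalDimension}, proved by contour integration around the standard fundamental domain with care at the elliptic points $i$ and $\rho = e^{2\pi i/3}$ and at the cusp; since that theorem is available to us, everything else is bookkeeping. The mildest remaining subtlety is to check uniformly over $k \bmod 12$ that $4a + 6b = k$ is solvable in nonnegative integers for all even $k \ge 4$ and that the monomial count agrees with $\dim M_k$.
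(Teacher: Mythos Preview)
Your argument is correct and is essentially the standard proof given in the cited reference \cite{Za08}. The paper itself does not supply a proof of this proposition; it is quoted as a classical result from Zagier's lectures, so there is nothing in the paper to compare against beyond the citation. Your reduction via $M_{k-12}\xrightarrow{\;\cdot\Delta\;}S_k$, the inductive spanning argument using a monomial $E_4^aE_6^b$ of the right weight, and the monomial-counting proof of algebraic independence all match Zagier's treatment; the only cosmetic difference is that Zagier deduces independence more quickly from the observation that $E_4$ and $E_6$ have a common zero-free point (namely $i\infty$) together with the fact that $E_4(\rho)=0\ne E_6(\rho)$, whereas you use the dimension formula and a generating-function count---both arguments are valid and equally short.
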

To parallel computations in later sections we record the Fourier series of $E_k(z)$. 

%%%%%%
% Prop. 3.3 (formerly 3.5)
%%%%%%%
\begin{proposition}[Proposition 5, p.16 of \cite{Za08}]
The Fourier expansion of the Eisenstein series $E_k(z)$ for an even integer $k \ge 4$  is given by
$$E_k(z) = \frac{(2\pi i)^k}{\zeta(k) (k-1)!} \( - \frac{B_k}{2k} + \sum_{n=1}^\infty \sigma_{k-1}(n) q^n\),$$
where $\sigma_\ell(n) = \sum_{d\mid n, d>0} d^\ell$, $\zeta$ is the Riemann zeta function, 
and $B_k$ is the $k$th Bernoulli number defined by the generating function $\sum_{k=0}^\infty B_k \frac{x^k}{k!} = \frac{x}{e^x -1}$. 
\end{proposition}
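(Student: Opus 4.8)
The plan is to derive this from the Lipschitz summation formula applied to the lattice form of the Eisenstein series. First I would pass to the unnormalized series $G_k(z) := \sum_{(m,n) \in \ZZ^2 \setminus \{(0,0)\}} (mz+n)^{-k}$, which for $k \ge 4$ converges absolutely and locally uniformly on $\H$; sorting each nonzero pair $(m,n)$ according to its positive $\gcd$ and factoring out the coprime part gives $G_k(z) = 2\zeta(k)\, E_k(z)$, so it is enough to compute the Fourier expansion of $G_k(z)$ and divide by $2\zeta(k)$ at the end. Splitting off the terms with $m = 0$ contributes $2\zeta(k)$ (using $k$ even), and what remains is to evaluate $\sum_{m \ne 0}\sum_{n \in \ZZ}(mz+n)^{-k}$.

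The key input is the Lipschitz formula: for $\Im(w) > 0$ and any integer $k \ge 2$,
\[
\sum_{n \in \ZZ} \frac{1}{(w+n)^k} \;=\; \frac{(-2\pi i)^k}{(k-1)!}\sum_{d=1}^\infty d^{k-1} e^{2\pi i d w}.
\]
I would prove this by starting from the partial-fraction expansion $\pi \cot(\pi w) = \frac1w + \sum_{n \ge 1}\bigl(\frac{1}{w+n} + \frac{1}{w-n}\bigr)$, rewriting the left-hand side for $\Im(w)>0$ as $\pi i - 2\pi i \sum_{d \ge 0} e^{2\pi i d w}$, and differentiating both sides $k-1$ times in $w$; termwise differentiation is legitimate because both series converge locally uniformly on $\H$. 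Applying this with $w = mz$ for $m \ge 1$ and combining the contributions of $m$ and $-m$ (allowed since $k$ is even, so $(-1)^k = 1$) yields
\[
\sum_{m \ne 0}\sum_{n \in \ZZ}\frac{1}{(mz+n)^k} \;=\; \frac{2(-2\pi i)^k}{(k-1)!}\sum_{m=1}^\infty \sum_{d=1}^\infty d^{k-1} q^{md}, \qquad q := e^{2\pi i z}.
\]
Grouping the absolutely convergent double series by $n = md$ turns it into $\sum_{n \ge 1}\sigma_{k-1}(n)\,q^n$, and since $k$ is even one may replace $(-2\pi i)^k$ by $(2\pi i)^k$, giving $G_k(z) = 2\zeta(k) + \frac{2(2\pi i)^k}{(k-1)!}\sum_{n \ge 1}\sigma_{k-1}(n)q^n$.

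Finally I would use the Euler evaluation $\zeta(k) = -\frac{(2\pi i)^k B_k}{2\, k!}$ for even $k \ge 2$, equivalently $2\zeta(k) = \frac{(2\pi i)^k}{(k-1)!}\bigl(-\frac{B_k}{k}\bigr)$, to rewrite the constant term; this gives $G_k(z) = \frac{2(2\pi i)^k}{(k-1)!}\bigl(-\frac{B_k}{2k} + \sum_{n \ge 1}\sigma_{k-1}(n)q^n\bigr)$, and dividing by $2\zeta(k)$ produces exactly the stated formula for $E_k(z)$. The hard part is the Lipschitz formula itself — establishing the cotangent partial-fraction identity and justifying both the termwise differentiation and the convergence of the resulting $q$-series; once that is in place, the remainder is bookkeeping: the rearrangement of an absolutely convergent double sum and the substitution of the closed form for $\zeta(k)$.
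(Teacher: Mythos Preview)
Your argument is correct and is the classical derivation via the Lipschitz summation formula. Note that the paper itself does not supply a proof of this proposition: it is quoted as background from Zagier \cite{Za08}, and your approach is precisely the standard one given there, so there is nothing to compare.
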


Finally we recall the notion of a  \emph{weakly holomorphic modular form}.
%%%%%%
% Defn. 3.4 (formerly 3.6)
%%%%%%% 
\begin{definition}\label{de34}
A {\em weakly holomorphic modular form} $f:\H \to \C$ on $\SL_2(\Z)$  of weight $k$ 
is a holomorphic  function  satisfying 
$f\mid_k \gamma(z) = f(z)$ for all $\gamma \in \SL_2(\Z)$ such that $f$ has at most linear exponential growth 
as $y\to \infty$ (i.e. $f(iy) = O(e^{A y})$ for some $A$ as $y\to\infty$). 
We let  $M_k^{!}$ denote the vector space
of weakly holomophic modular forms of weight $k \in 2\ZZ$ for $SL(2, \ZZ)$.
\end{definition}

Such functions may have poles at the cusp.
A basic example of a weakly holomorphic modular form of weight $0$ 
having such a pole is the modular invariant 
$j(z) =  1728\frac{E_{4}(z)^3}{E_4(z)^3 - E_6(z)^2}$. 
The vector space  $M_k^{!}$ is infinite-dimensional for all $k \ge 0$.

%%%%%%%%%%%%%%%%%
%%                                         %%
%%     3.3   Non-holomorphic Eisenstein series       %%
%%                                         %%
%%%%%%%%%%%%%%%%%

\subsection{Properties of  Non-holomorphic Eisenstein series}\label{sec:NHE}

 Recall from \eqref{eqn:E_k_def_I} that
for $2k \in 2\ZZ$  the  nonholomorphic Eistenstein series $E_k(z, s)$ is
\begin{equation}\label{Eis}
E_{k}(z, s) :=\frac{1}{2} \sum_{(m,n) \in \ZZ^2 \backslash  (0,0)} \frac{y^s}{|mz+n|^{2s} (mz+n)^{k}}.
\end{equation}
This series converges absolutely for  $Re(s)  > 1-k$. The resulting function 
transforms under  elements of $SL(2, \ZZ)$ as 
$$
E_{k}\left(\frac{az+b}{cz+d}, s\right) =  (cz+d)^{2k} E_k(z, s) \text{ when } \matr{a}{b}{c}{d} \in \SL(2, \Z)
$$
For  even weights $k \ge 4$ at $s=0$  
this function specializes to  an
unnormalized version\footnote{With our scaling $E_{k}(z, 0) = \frac{1}{2}G_k(z)$,
where $G_k(z)$ is the usual unnormalized holomorphic Eisenstein series in 
Serre \cite{Se73}. }
of the  holomorphic Eisenstein series $E_k(z)$, 
given in Section \ref{sec:Holomorphic}, with
$$
E_{k}(z, 0) = \frac{1}{2} \sum_{(m,n) \in \ZZ^2 \backslash 0} \frac{1}{(mz+n)^2}
= \frac{1}{2} \zeta(k) E_k(z).
$$

Using results on Maass's nonholomorphic Eisenstein series $G(a, \ol{z}; \alpha, \beta)$, as
treated in \cite{LR15S}, we obtain for 
the completed non-holomorphic Eisenstein series
 $\htE_k(z, s)= \pi^{- \frac{s+k}{2}}\Gamma(s+ \frac{k}{2} + |\frac{k}{2}|) E_k(z, s)$
the following result.
%***************************************************
%
% Prop. 3.5
%
%***************************************************

\begin{proposition}\label{prop:FourierExpansionArbitrary}
{\rm (Fourier expansion of $\htE(z,s)$)}
For $k\in 2\ZZ$,
% $k\ne 0,2$, 
the completed nonholomorphic Eisenstein series 
$\htE(z, s)$
has the Fourier expansion 
\begin{align*}
\htE_k(z,s) = C_{0}(y,s) 
& + (-1)^{\frac{k}{2}} (\sqrt{2} \pi)^{2s+k} \pi^{-s-\frac{k}{2}} \Gamma\( s 
+\frac{k}{2}
+ \frac{\abs{k}}{2}\)   \\
&\ \ \  \times \sum_{n\in \Z \backslash \{0\}} 
\frac{\sigma_{2s+k-1}(n)}{\Gamma\( s+\frac{k}{2}(1 + \sgn{n}) \) } (2\pi \abs{n}y)^{-\frac{k}{2}} 
 W_{\frac{1}{2} \sgn{n} k , s+\frac{k-1}{2} } (4\pi \abs{n} y)e^{2\pi i n x},
 \end{align*}
in which $W_{\kappa, \mu}(z)$ denotes the Whittaker $W$-function and 
the constant term is
\begin{align*}
C_0(y,s) =& \left(   \frac{\Gamma\(s+\frac{k}{2} + \frac{\abs{k}}{2}\)}{\Gamma\(s+\frac{k}{2}\)}  \htz(2s+k)\, y^s\right. \\
& \ \ \ \left.+  (-1)^{\frac{k}{2}} \frac{\Gamma\( s+ \frac{k}{2}\) \Gamma \( s+ \frac{k}{2} + \frac{\abs{k}}{2}\)}{\Gamma( s+ k) \Gamma(s) } \htz(2-2s-k)\, y^{1-s-k}\right).
    \end{align*}
 and $\sigma_s(n) = \sum_{d |n, d>0} d^{s}.$
\end{proposition}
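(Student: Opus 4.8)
The plan is to derive the Fourier expansion of $\htE_k(z,s)$ by unfolding the Eisenstein series in its $\Gamma_\infty\backslash\SL_2(\Z)$ form and applying Poisson summation in the $x$-variable, following the classical Maass/Selberg method. First I would write $E_k(z,s)$ using the coset decomposition \eqref{eqn:E_k_def_I}, split off the $c=0$ terms (which contribute the purely $y^s$ piece carrying the $\htz(2s+k)$ factor after multiplying by $\zeta(2s)$, i.e. the first summand of $C_0(y,s)$), and then for $c\neq 0$ group the sum over $d$ into residue classes mod $c$ and apply Poisson summation to $\sum_{d\in\Z} (cz+d)^{-k}|cz+d|^{-2s}$. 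The resulting Fourier integral in each frequency $n$ is a standard one evaluating to a power of $y$ times a Bessel/Whittaker function; for $n=0$ this produces the second term of $C_0(y,s)$ (with the $\htz(2-2s-k)$ and $y^{1-s-k}$), and for $n\neq 0$ the Kloosterman-type sums over $c$ collapse (because here the group is the full modular group with trivial multiplier) to the divisor sum $\sigma_{2s+k-1}(n)$ after an elementary manipulation with $\zeta(2s)$.

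Alternatively — and this is the route the paper signals with "Using results on Maass's nonholomorphic Eisenstein series $G(a,\ol z;\alpha,\beta)$, as treated in \cite{LR15S}" — I would simply invoke the known Fourier expansion of Maass's $G$-function and perform a change of variables/normalization to match our $E_k(z,s)$. Concretely, Maass \cite{Maa83} gives the Fourier expansion of $G(a,\ol z;\alpha,\beta)$ in terms of confluent hypergeometric (Whittaker) functions and the divisor-type coefficients $\sigma$; one identifies $E_k(z,s)$ (or rather its completion) with a specialization $G(a,\ol z;\alpha,\beta)$ for appropriate $\alpha,\beta$ depending on $s$ and $k$, reads off the constant term and the $n\neq 0$ terms, and then multiplies through by the completing factor $\pi^{-(s+k)/2}\Gamma(s+\tfrac k2+\tfrac{|k|}2)$ and tracks how it interacts with the $\Gamma$-factors and the $\htz$'s to arrive at the symmetric-looking form displayed. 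The sign $(-1)^{k/2}$ and the powers of $\sqrt2\,\pi$ come out of the normalization of the Whittaker function and the gamma-function duplication/reflection identities applied during this bookkeeping.

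The key steps, in order, are: (i) separate the $c=0$ contribution and recognize it as the $\htz(2s+k)y^s$ part of $C_0$; (ii) apply Poisson summation (or cite Maass) to the $c\neq 0$ part; (iii) evaluate the $n=0$ Fourier integral to get the $\htz(2-2s-k)y^{1-s-k}$ part of $C_0$, using the beta-integral $\int_\R (1+t^2)^{-s}(\,\cdot\,)\,dt$ and the functional equation of $\htz$; (iv) evaluate the $n\neq 0$ Fourier integrals, producing the Whittaker function $W_{\frac12\sgn{n}k,\,s+\frac{k-1}2}(4\pi|n|y)$ and the $(2\pi|n|y)^{-k/2}$ factor; (v) sum over $c$ to collapse the $c$-sum against $\zeta(2s)$ into $\sigma_{2s+k-1}(n)$; (vi) multiply by the completion factor and simplify all $\Gamma$-quotients, checking the case $\sgn{n}=\pm1$ against $\Gamma(s+\tfrac k2(1+\sgn n))$. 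I expect step (iv)–(v), namely matching the contour/asymptotics of the Fourier integral to the specific Whittaker normalization and correctly collapsing the $c$-sum with the right powers of $2$ and $\pi$ and the sign $(-1)^{k/2}$, to be the main obstacle — it is where all the normalization conventions must be reconciled. If instead one cites \cite{LR15S}/Maass directly, the main obstacle shifts to carefully translating between Maass's $(\alpha,\beta,a)$ parametrization and our $(s,k)$ parametrization of $E_k(z,s)$ and verifying the completion factors agree; this should be routine but bookkeeping-intensive, and I would organize it as a short lemma matching the two normalizations before stating the expansion.
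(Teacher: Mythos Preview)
Your proposal is correct and in fact more detailed than the paper's own treatment: the paper's proof consists entirely of a citation to the companion paper \cite{LR15S} (together with references for the Whittaker function), which is precisely the second route you describe. Your outline of the direct Poisson-summation computation is sound and would constitute an independent proof, but the paper does not carry it out here.
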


\begin{proof}
This result is established 
 in \cite{LR15S}. 
The Whittaker $W$-function is described in Whittaker and Watson \cite[Sect. 16.12]{WW27},
and   \cite{LR15S},
as well as the handbook \cite{AS64} and its sequel \cite{OLBC10}.
%%%%%%%%%%%%%%%%%%
% Theorem to cite in {LR15S}.
%%%%%%%%%%%%%%%%%%%
\end{proof}

We now collect various analytic properties of the Eisenstein series.
%***************************************************
%
% Theorem 3.6 (formerly 2.1)
%
%***************************************************

\begin{theorem}\label{th38}
{\rm (Properties of $E_k(z,s)$)}
Let $k \in 2\ZZ$.

(1) {\rm (Analytic Continuation)} For fixed $z \in \HH$, the  completed weight $k$ Eisenstein series  
$$
\htE_k(z, s) := \pi^{-(s+\frac{k}{2})}\Gamma(s+\frac{k}{2}+ \frac{|k|}{2}) E_k(z,s)
$$
analytically continues to the $s$-plane as
 a meromorphic function. For $k=0$ 
its has two singularities, which are 
simple poles at $s=0$ and $s=1$
with residues $-\frac{1}{2}$ and $\frac{1}{2}$, respectively.
For $ k \ne 0$ it is an entire function.

(2) {\rm (Functional Equation)} For fixed $z \in \HH$, the completed weight
$k$ Eisenstein series
satisfies the functional equation
\begin{equation}~\label{213aa}
%%%%%%%%%%%%
% abbreviation for E_0^{\ast}
%%%%%%%%%%%%%%%%
\htE_k(z, s)= \htE_k(z, 1-k-s).
\end{equation}
The doubly-completed series
$$\dhtE_k(z,s) :=(s+ \frac{k}{2})(s +\frac{k}{2}-1) \htE_k(z,s)$$
 is an entire function of $s$ for all $k \in 2\ZZ$
and  satisfies the same functional equation
\begin{equation}~\label{213b}
%%%%%%%%%%%%%%%%
% abbreviation for E_0^{\ast\ast}
%%%%%%%%%%%%%%%%%%
\dhtE_k(z, s)= \dhtE_k(z, 1-k-s).
\end{equation}
The center line of these functional equations is $Re(s) = \frac{1-k}{2}.$

(3) {\rm ( $\Delta_k$-Eigenfunction )} $E_k(z, s)$ is a (generalized) eigenfunction
of the non-Euclidean Laplacian operator $\Delta_k$ with eigenvalue $\lambda= s(s+k-1).$ 
That is, for all $s \in \CC$,
\begin{equation}\label{209a}
\Delta_k E_{k}(z, s) = s(s+k-1) E_{k}(z, s).
\end{equation}
This eigenfunction property holds for 
the completed functions $\htE_k(z, s)$ and $\dhtE_k(z, s).$
\end{theorem}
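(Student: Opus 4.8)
My plan is to treat part (3) by a direct computation, and to read off parts (1)--(2) from the Fourier expansion of Proposition~\ref{prop:FourierExpansionArbitrary} (itself imported from \cite{LR15S} via Maass's Eisenstein series $G(a,\ol{z};\alpha,\beta)$). For (3), I would first record the one-line identity $\Delta_k(y^s) = s(s+k-1)\,y^s$, obtained by substituting $\psi_s(z) = y^s$ into $\Delta_k = y^2(\partial_x^2+\partial_y^2)-iky(\partial_x + i\partial_y)$ and using $\partial_x\psi_s = 0$. Combining this with the standard fact that $\Delta_k$ commutes with the weight-$k$ slash operator gives $\Delta_k(\psi_s\mid_k\gamma) = s(s+k-1)(\psi_s\mid_k\gamma)$ for every $\gamma$. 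Since the series \eqref{eqn:E_k_def_I} converges locally uniformly together with its termwise first and second $z$-derivatives on $\re{s}>1-k$, differentiation under the sum is justified there and yields $\Delta_k E_k(z,s) = s(s+k-1)E_k(z,s)$; this identity then extends to all $s$ by the analytic continuation of (1). Because $\htE_k$ and $\dhtE_k$ differ from $E_k$ only by a factor depending on $s$ alone, and $\Delta_k$ acts only in the $z$-variables, they satisfy the same eigenvalue equation, completing (3).

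For (1), I would observe that in the Fourier expansion of $\htE_k(z,s)$ the nonconstant coefficients are, after cancelling the overall $\Gamma$-factor against the reciprocal $\Gamma$ inside each term, entire functions of $s$ (a polynomial in $s$ times $W_{\kappa,\mu}$ times $\sigma_{2s+k-1}(n)$), and they decay rapidly in $n$ locally uniformly in $s$ by the asymptotics of $W_{\kappa,\mu}$; hence for each fixed $z$ the nonconstant part is an entire function of $s$. All singularities therefore come from the constant term $C_0(y,s)$, a sum of two pieces proportional to $\htz(2s+k)\,y^s$ and $\htz(2-2s-k)\,y^{1-s-k}$. For $k=0$ these give simple poles at $s=0,1$, and computing residues from those of $\htz$ (namely $-1$ at $0$ and $1$ at $1$) yields the claimed values $-\tfrac12$ and $\tfrac12$. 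For $k\ne 0$ the accompanying Gamma quotients vanish at the would-be poles off the center line: for $k\ge 2$ the factor $\Gamma(s+\tfrac{k}{2}+\tfrac{|k|}{2})/\Gamma(s+\tfrac{k}{2})$ is a polynomial with a zero at $s=-\tfrac{k}{2}$ killing the pole of $\htz(2s+k)$, and $1/\Gamma(s)$ kills the pole of $\htz(2-2s-k)$ at $s=1-\tfrac{k}{2}$, while the remaining potential pole at the center $s=\tfrac{1-k}{2}$ is common to both pieces and cancels; the case $k\le -2$ is symmetric. Hence $\htE_k(z,s)$ is entire for $k\ne 0$.

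For the functional equation \eqref{213aa} in (2), I would either transport the corresponding symmetry of Maass's $G(a,\ol{z};\alpha,\beta)$ through the parameter dictionary, or verify it term-by-term: $s\mapsto 1-k-s$ interchanges the two constant-term pieces $\htz(2s+k)\,y^s$ and $\htz(2-2s-k)\,y^{1-s-k}$, acts on the divisor sums via $\sigma_{\nu}(n) = n^{\nu}\sigma_{-\nu}(n)$, and sends the Whittaker index $\mu = s+\tfrac{k-1}{2}$ to $-\mu$, under which $W_{\kappa,\mu}=W_{\kappa,-\mu}$ is unchanged. Finally, for the doubly-completed series: when $k=0$ the prefactor $s(s-1)$ cancels exactly the two poles, making $\dhtE_0(z,s)$ entire, and when $k\ne 0$ there is nothing left to cancel; in every case a direct check shows $(s+\tfrac{k}{2})(s+\tfrac{k}{2}-1)$ is invariant under $s\mapsto 1-k-s$, so \eqref{213b} follows from \eqref{213aa}. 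The main obstacle I anticipate is the bookkeeping in part (1): pinning down uniformly in the sign of $k$ how the Gamma factors cancel the zeta poles, together with the locally-uniform decay estimate needed to conclude the nonconstant Fourier part is entire. The functional equation itself is cleanest if quoted from Maass rather than reverified by hand.
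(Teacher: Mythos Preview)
Your proposal is correct, and in fact supplies substantially more than the paper does: the paper's proof of this theorem is simply the one-line citation ``This result is given in \cite{LR15S}.'' Your outline is essentially how one would unpack that citation. Part~(3) via $\Delta_k(y^s)=s(s+k-1)y^s$ together with slash-invariance of $\Delta_k$ is the standard argument; parts~(1)--(2) are correctly read off from the Fourier expansion of Proposition~\ref{prop:FourierExpansionArbitrary} (which is itself imported from \cite{LR15S}), and your termwise verification of the functional equation using $\sigma_\nu(n)=n^\nu\sigma_{-\nu}(n)$ and $W_{\kappa,\mu}=W_{\kappa,-\mu}$ is the natural route.

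One point deserves a line of actual computation rather than the phrase ``cancels'': for $k\ne 0$ the two constant-term pieces each carry a simple pole at the center $s=\tfrac{1-k}{2}$, and their residues are proportional to $\Gamma\bigl(\tfrac{1+k}{2}\bigr)/\Gamma\bigl(\tfrac{1}{2}\bigr)$ and $(-1)^{k/2}\Gamma\bigl(\tfrac{1}{2}\bigr)/\Gamma\bigl(\tfrac{1-k}{2}\bigr)$ respectively. Their cancellation is equivalent to $\Gamma\bigl(\tfrac{1+k}{2}\bigr)\Gamma\bigl(\tfrac{1-k}{2}\bigr)=(-1)^{k/2}\pi$, which is the reflection formula $\Gamma(z)\Gamma(1-z)=\pi/\sin(\pi z)$ at $z=\tfrac{1+k}{2}$. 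The functional equation alone does not force this (a simple pole at the fixed point of $s\mapsto 1-k-s$ is compatible with the symmetry), so this check is genuinely needed.
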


\begin{proof} 
This result is given 
%%%%%
%as Theorem number [TBA] 
%%%%%
in \cite{LR15S}.
\end{proof}

We note an interesting consequence:   we may obtain the doubly-completed
Eisenstein series from the singly-completed one by applying a
differential operator, a  shifted Laplacian. This supplies an analytic construction
of the ``double completion". 
%***************************************************
%
% Corollary 3.7
%
%***************************************************

\begin{corollary}\label{cor39}
%%%%%%%%%%%%%%%%%%%%%%%%%%%%%%%%%%%%%%%%%%%%%%%%%
%{\rm (Obtaining doubly-completed Eisenstein series from completed ones applying a Laplacian)}
%%%%%%%%%%%%%%%%%%%%%%%%%%%%%%%%%%%%%%%%%%%%%%%%%
For $k \in 2\ZZ$
the doubly-completed non-holomorphic Eisenstein series $\htE(z, s)$
is obtainable from the completed Eisenstein series $\htE(z, s)$ by
\begin{equation}\label{209cc}
(\Delta_k +\frac{k^2}{4}) \htE_{k}(z, s) = \dhtE_k(z, s).
\end{equation}
\end{corollary}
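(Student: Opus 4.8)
The plan is to start from the eigenfunction property \eqref{209a} of Theorem~\ref{th38}(3), which says $\Delta_k E_k(z,s) = s(s+k-1)E_k(z,s)$, and observe that it passes to the completed Eisenstein series $\htE_k(z,s)$ since $\htE_k(z,s)$ differs from $E_k(z,s)$ only by a function of $s$ alone (the Gamma and power-of-$\pi$ factors commute with $\Delta_k$, which acts only in the $z$-variable). Thus
\begin{equation}\label{eqn:cor-proof-1}
\Delta_k \htE_k(z,s) = s(s+k-1)\,\htE_k(z,s).
\end{equation}
Adding $\frac{k^2}{4}\htE_k(z,s)$ to both sides gives
\begin{equation}\label{eqn:cor-proof-2}
\Big(\Delta_k + \frac{k^2}{4}\Big)\htE_k(z,s) = \Big(s(s+k-1) + \frac{k^2}{4}\Big)\htE_k(z,s).
\end{equation}

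The remaining step is the purely algebraic identity
\begin{equation}\label{eqn:cor-proof-3}
s(s+k-1) + \frac{k^2}{4} = \Big(s + \frac{k}{2}\Big)\Big(s + \frac{k}{2} - 1\Big),
\end{equation}
which one checks by expanding the right-hand side: $(s+\frac{k}{2})(s+\frac{k}{2}-1) = s^2 + (k-1)s + \frac{k}{2}(\frac{k}{2}-1) = s^2 + (k-1)s + \frac{k^2}{4} - \frac{k}{2}$. This does not quite match; the correct bookkeeping is $(s+\frac{k}{2})(s+\frac{k}{2}-1) = s^2 + ks - s + \frac{k^2}{4} - \frac{k}{2} = s(s+k-1) + \frac{k^2}{4} - \frac{k}{2}$, so in fact one needs the shift operator $\Delta_k + \frac{k^2}{4} - \frac{k}{2}$, or equivalently the statement of the corollary should read with $\frac{k^2-2k}{4}$ in place of $\frac{k^2}{4}$; I would first recheck the normalization of $\dhtE_k$ in \eqref{eqn:E_k_def} against the eigenvalue in \eqref{209a} to pin down the exact constant before writing the final line. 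Once the constant $c_k$ with $s(s+k-1)+c_k = (s+\frac{k}{2})(s+\frac{k}{2}-1)$ is fixed, combining \eqref{eqn:cor-proof-2} with this identity and the definition $\dhtE_k(z,s) = (s+\frac{k}{2})(s+\frac{k}{2}-1)\htE_k(z,s)$ from \eqref{eqn:E_k_def} yields the claimed formula immediately.

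The only genuine subtlety — and the step I would treat most carefully — is the interchange: that $\Delta_k$, a differential operator in $x,y$, commutes with multiplication by the $s$-dependent normalizing factor and with analytic continuation in $s$. The first is immediate since the normalizing factor is independent of $z$; the second follows because for each fixed $z$ the continuation is through a meromorphic function of $s$ and $\Delta_k$ acts continuously (indeed, the Fourier expansion in Proposition~\ref{prop:FourierExpansionArbitrary} converges locally uniformly together with its $z$-derivatives on compacta in $\H$, uniformly for $s$ in compact subsets avoiding the poles), so the eigenfunction identity, valid for $\Re(s)$ large, persists by the identity theorem. For $k=0$ the factor $(s+\frac{k}{2})(s+\frac{k}{2}-1) = s(s-1)$ vanishes exactly at the poles $s=0,1$ of $\htE_0(z,s)$, so $\dhtE_0(z,s)$ is entire and the identity \eqref{209cc} holds there by continuity as well.
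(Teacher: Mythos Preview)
Your approach is exactly the paper's: apply the eigenfunction relation of Theorem~\ref{th38}(3) to $\htE_k(z,s)$, add the constant, and recognize the resulting quadratic in $s$ as the factor defining $\dhtE_k$. The paper's proof is the one-line computation
\[
(\Delta_k + \tfrac{k^2}{4})\htE_k(z,s) = \big(s(s+k-1) + \tfrac{k^2}{4}\big)\htE_k(z,s) = (s+\tfrac{k}{2})(s+\tfrac{k}{2}-1)\htE_k(z,s) = \dhtE_k(z,s),
\]
with no further justification of the interchange.

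Your hesitation about the constant is well founded, and you should trust your algebra rather than the printed statement. Expanding gives
\[
(s+\tfrac{k}{2})(s+\tfrac{k}{2}-1) = s(s+k-1) + \tfrac{k^2-2k}{4},
\]
so with the eigenvalue $s(s+k-1)$ from \eqref{209a} and the definition \eqref{eqn:E_k_def} the correct shift is $\tfrac{k(k-2)}{4}$, not $\tfrac{k^2}{4}$. The paper's proof contains the same algebraic slip in its middle equality. (For the weights $k=0$ and $k=2$ treated in detail in the paper the two constants coincide, which is presumably why the discrepancy went unnoticed.) Your discussion of the analytic-continuation issue is more careful than what the paper provides, but it is not needed for the argument as the paper presents it.
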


\begin{proof}
By Theorem \ref{th38} (3) we  have
$$
(\Delta_k +\frac{k^2}{4}) \htE_{k}(z, s)= 
\big(s(s+k-1) + \frac{k^2}{4}\big) \htE(z, s) =
 (s+ \frac{k}{2}) (s+ \frac{k}{2}-1)\htE(z, s) =  \dhtE(z, s).
$$
\end{proof}

%%%%%%%%%%%%%%%%%
%%                                         %%
%%     4   POLYHARMONIC       %%
%%                                         %%
%%%%%%%%%%%%%%%%%
\section{Polyharmonic Fourier Series } \label{sec:newsec4}

We formulate results on the form of Fourier series of (shifted) polyharmonic functions, some taken
from \cite{LR15S}. We  give results  Fourier series for general $\lambda=s(s+k-1)$,
 although this paper considers only  $\lambda=0$,  because our results require taking derivatives
 with respect to $s$.

%%%%%%%%%%%%%%%%%%%%%%%%%%%%%%%%%%%%%%%%%%%%%%%%%%%%%%%%%%%%%%%%%%%
%%%%%%
% Subsection 4.1 Harmonic FOURIER Coefficients (formerly 3,4)
%%%%%%%
\subsection{Harmonic Fourier coefficients}\label{sec:40Harm}

The following result gives
the allowable functional  form of the Fourier coefficients for periodic functions in the hyperbolic plane
that satisfy $(\Delta_k- \lambda) h_n(z) e^{2\pi i n x}  = 0$
and have moderate growth at the cusp, meaning  $O(y^c)$ for some finite $c$ as $y \to \infty$.  This result is due to Maass \cite[Hilfssatz 6]{Maa53}
%(or earlier)
and involves Whittaker $W$-functions, as treated in \cite[Sect. 16.12]{WW27}.

%%%%%%%%%%%%%%%%%%%
% THEOREM 4.1
%%%%%%%%%%%%%%%%%%%
\begin{theorem}\label{th40}
{\rm (Harmonic Fourier Coefficients of Moderate Growth)}
Let  $k \in 2 \ZZ$ and suppose  that $f_n(z) = h_n(y) e^{2 \pi i nx}$  is a
shifted-harmonic  function  for $\Delta_k$ on $\HH$ with eigenvalue $\lambda \in \CC$, i.e.
it satisfies
$$
(\Delta_k- \lambda) f_n(z) =0 \quad \mbox{for all} \quad z=x+iy \in \HH.
$$
Write $\lambda = s_0(s_0+k-1)$ for some $s_0 \in \CC$ (there are generally two choices for $s_0$.)
Suppose also that $f_n(z)$ has at most polynomial growth in $y$ at the cusp.
Then the  complete set of such functions $h_n(y)$, are given in the following list.
\begin{enumerate}
\item[(1)]
Suppose  $n \ne 0$, and let  $\epsilon = \sgn{n} \in \{\pm 1\}$. Then
$$
h_n(y) = y^{-\frac{k}{2}}  
 \Big(a_0 (W_{\frac{1}{2}\epsilon k, s_0+ \frac{k-1}{2} } )(4 \pi |n| y)\Big)
 %\Big)
$$
for some constant $a_0 \in \CC$, with $W_{\kappa, \mu} (z)$ 
denoting a  $W$-Whittaker function. 
\item[(2a)]
Suppose $n =0$ with $s_0 \ne \frac{1-k}{2}$ (equivalently, with  $\lambda \ne -(\frac{1-k}{2})^2$). Then
$$ 
h_n(y) =  a_0^{+}  y^{s_0}+ a_0^{-} y^{1-k-s_0}
$$
for some constants $a_0^{+}, a_0^{-} \in \CC$.
\item[(2b)]
Suppose $n =0$ with $s_0 = \frac{1-k}{2}$ (equivalently,   with $\lambda = -(\frac{1-k}{2})^2$).
Then
$$
h_n(y) =  \sum_{j=0}^1 \frac{\partial^j}{\partial s^j} (y^{s})|_{s=s_0}
= \sum_{j=0}^{1} a_j (\log y)^j y^{\frac{k-1}{2}}.
$$
for some constants $a_0, a_1\in \CC$.
\end{enumerate}
\end{theorem}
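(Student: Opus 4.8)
The plan is to reduce the PDE to an ODE in the single variable $y$ and then analyze that ODE, with particular care taken at the cusp. First I would substitute the ansatz $f_n(z) = h_n(y) e^{2\pi i nx}$ into the eigenvalue equation $(\Delta_k - \lambda)f_n = 0$. Using the explicit form $\Delta_k = y^2(\partial_x^2 + \partial_y^2) - iky(\partial_x + i\partial_y)$, the $x$-derivatives act on $e^{2\pi i n x}$ producing factors of $2\pi i n$, and the result is a second-order linear ODE for $h_n(y)$ of the shape
\begin{equation*}
y^2 h_n''(y) - (4\pi^2 n^2 y^2 + 2\pi n k y) h_n(y) + k y h_n'(y) - \lambda h_n(y) = 0
\end{equation*}
(up to sign conventions I would need to pin down). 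The key move is the change of variables: for $n\ne 0$ set $w = 4\pi|n|y$ and $h_n(y) = w^{-k/2} u(w)$ (or some similar normalization absorbing the $y^{-k/2}$ prefactor); this transforms the ODE into Whittaker's equation $u'' + \left(-\tfrac14 + \tfrac{\kappa}{w} + \tfrac{1/4 - \mu^2}{w^2}\right) u = 0$ with parameters $\kappa = \tfrac12 \epsilon k$ and $\mu = s_0 + \tfrac{k-1}{2}$, after writing $\lambda = s_0(s_0+k-1)$. Matching the parameters is the bookkeeping heart of case (1).

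Next, for $n\ne 0$, Whittaker's equation has a two-dimensional solution space spanned by $M_{\kappa,\mu}(w)$ and $W_{\kappa,\mu}(w)$ (or, in degenerate parameter cases, by $W_{\kappa,\mu}(w)$ and $W_{-\kappa,\mu}(-w)$). The moderate-growth hypothesis is what cuts this down to one dimension: as $y\to\infty$, $W_{\kappa,\mu}(w) \sim w^\kappa e^{-w/2}$ decays exponentially, while $M_{\kappa,\mu}(w) \sim \Gamma(\ldots) w^{-\kappa} e^{w/2}$ grows exponentially, violating the $O(y^c)$ bound. So only the $W$-solution survives, giving $h_n(y) = a_0\, y^{-k/2} W_{\frac12\epsilon k,\, s_0 + \frac{k-1}{2}}(4\pi|n|y)$. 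I would need to be slightly careful to confirm the exponentially growing solution is genuinely present and not accidentally suppressed for special parameter values, but since we only need an \emph{upper} bound on the solution space this direction is safe; the reverse inclusion (that this $h_n$ does satisfy the ODE and has moderate growth) is immediate from the asymptotics.

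For $n = 0$ the ODE degenerates: the $4\pi^2 n^2 y^2$ and $2\pi n k y$ terms vanish, leaving $y^2 h_0'' + ky h_0' - \lambda h_0 = 0$, an Euler (equidimensional) equation. Its indicial equation is $r(r-1) + kr - \lambda = 0$, i.e. $r^2 + (k-1)r - \lambda = 0$, whose roots are $r = s_0$ and $r = 1-k-s_0$ precisely because $\lambda = s_0(s_0+k-1)$. When these roots are distinct — equivalently $s_0 \ne 1-k-s_0$, i.e. $s_0 \ne \frac{1-k}{2}$ — the general solution is $a_0^+ y^{s_0} + a_0^- y^{1-k-s_0}$, which is case (2a); no growth condition is needed here since both solutions are powers of $y$ and hence automatically of moderate growth (polynomial growth is allowed in both directions). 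When the roots coincide, $s_0 = \frac{1-k}{2}$, the repeated-root theory of Euler equations gives the second solution as $(\log y)\, y^{(k-1)/2}$, so $h_0(y) = a_0 y^{(k-1)/2} + a_1 (\log y) y^{(k-1)/2}$, which is exactly the $\sum_{j=0}^1 \frac{\partial^j}{\partial s^j}(y^s)|_{s=s_0}$ expression in case (2b) — the $\partial/\partial s$ interpretation being the standard device for producing the logarithmic solution by differentiating the family of power solutions in the exponent.

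I expect the main obstacle to be purely computational rather than conceptual: getting the coefficients in the $x$-reduced ODE exactly right (signs, factors of $2\pi$, the interaction between the first-order term $-iky\partial_y$ in $\Delta_k$ and the substitution $h = w^{-k/2}u$) and then verifying that the Whittaker parameters come out to be \emph{exactly} $\kappa = \frac12\epsilon k$ and $\mu = s_0 + \frac{k-1}{2}$ rather than something off by a shift. A secondary subtlety is the careful identification of the degenerate parameter cases of Whittaker's equation (when $2\mu \in \mathbb{Z}$, the pair $M_{\kappa,\mu}, M_{\kappa,-\mu}$ may fail to be independent), but since in all cases $W_{\kappa,\mu}$ together with an exponentially growing companion spans the solution space, and moderate growth kills the companion, the statement of the theorem is unaffected. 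Alternatively, rather than redoing this computation I would simply cite Maass \cite[Hilfssatz 6]{Maa53}, from which the result is quoted, and limit the written proof to indicating the reduction to the ODE and the role of the moderate-growth hypothesis in selecting the $W$-Whittaker solution.
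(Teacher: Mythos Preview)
Your proposal is correct and is in fact more detailed than the paper's own proof, which consists solely of a citation to Maass \cite[Lemma 6, Chap.\ 4, p.\ 181]{Maa83} for the case $n\ne 0$ and the phrase ``it is a simple calculation'' for $n=0$. Your write-up spells out precisely the argument that citation is pointing to: reduce to an ODE in $y$, recognize Whittaker's equation after the substitution $h_n(y)=y^{-k/2}u(4\pi|n|y)$, and use the asymptotics $W_{\kappa,\mu}(w)\sim w^\kappa e^{-w/2}$ versus exponential growth of the independent solution to enforce the moderate-growth constraint; and for $n=0$ solve the Euler equation by its indicial polynomial. Your closing remark that one could simply cite Maass is exactly what the paper does.
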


\begin{proof}
For $n \ne 0$ this follows from  Maass \cite[Lemma 6, Chap. 4, p. 181]{Maa83}.
For $n =0$ it is a simple calculation.
\end{proof}

%%%%%%%%%%%%%%%%%%%%%%%%%%%%%%%%%%%%%%%%%%%%%%%%%%%%%%%%%%%%%%%%%%%
%%%%%%
% Subsection 4.2 Polyharmonic FOURIER Coefficients (formerly 3,4)
%%%%%%%%%%%%%%%%%%%%%%%%%%%%%%%%%%%%%%%%%%%%
\subsection{Polyharmonic Fourier coefficients}\label{sec:FourierExp}

We state  a result proved in  \cite{LR15S} which specifies the allowable form of
 individual Fourier coefficients $h_n(z)$ which satisfy $(\Delta_k- \lambda)^m h_n(z)e^{2\pi i n x} = 0$
 and  have moderate growth at the cusp. 
 This result is  based on the fact  that polyharmonic functions in $z$ 
in the Fourier coefficients 
are obtainable by repeated partial derivatives
 $\frac{\partial}{\partial s}$ of the eigenfunctions with respect to the eigenvalue parameter $s$. 
 This fact holds because
 although the operator $\frac{\partial}{\partial s}$ commutes
 with the Laplacian $\Delta_k$, it
 does not commute with $\Delta - s(s+k-1)I$ but instead satisfies the commutator
 identity
 $$
 [\Delta - s(s+k-1)I,\frac{\partial}{\partial s}] = (1-k-2s)I.
 $$
 of Heisenberg type. 
 %%%%%%%%%%%% %%%%%%%%%%%%%
 %This property is the source of  the
 % movement by $\frac{\partial}{\partial s}$ up the 
% tower structure for the Taylor coefficients of Eisenstein series 
%given in  figure 1 the introduction.
%%%%%%%%%%%%%%%%%%%%%%%%%%%

%%%%%%%%%%%%%%%%%%%
% THEOREM 4.2
%%%%%%%%%%%%%%%%%%%
\begin{theorem}\label{th41}
{\rm (Polyharmonic Fourier Coefficients)}
Let  $k \in 2 \ZZ$ and suppose  that $f_n(z) = h_n(y) e^{2 \pi i nx}$  is a
shifted-polyharmonic  function  for $\Delta_k$ on $\HH$ with eigenvalue $\lambda \in \CC$, i.e.
it satisfies
$$
(\Delta_k- \lambda)^m f_n(z) =0 \quad \mbox{for all} \quad z=x+iy \in \HH.
$$
Write $\lambda = s_0(s_0+k-1)$ for some $s_0 \in \CC$ (there are generally two choices for $s_0$.)
Suppose also that $f_n(z)$ has at most polynomial growth in $y$ at the cusp.
Then the  complete set of such functions $h_n(y)$, are given in the following list.
\begin{enumerate}
\item[(1)]
Suppose  $n \ne 0$, and let  $\epsilon = \sgn{n} \in \{\pm 1\}$. Then
$$
h_n(y) = y^{-\frac{k}{2}} \Big( 
\sum_{j=0}^{m-1} a_j  \frac{\partial^j}{\partial s^j} (W_{\frac{1}{2}\epsilon k, s+ \frac{k-1}{2} } ) \big|_{s=s_0}(4 \pi |n| y)
\Big)
$$
for some constants $a_j \in \CC$, with $W_{\kappa, \mu} (z)$ 
denoting the $W$-Whittaker function. 
\item[(2a)]
Suppose $n =0$ with $s_0 \ne \frac{1-k}{2}$ (equivalently, with  $\lambda \ne -(\frac{1-k}{2})^2$). Then
$$ 
h_n(y) = \sum_{j=0}^{m-1} a_j^{+} \frac{\partial^j}{\partial s^j}( y^{s}) \big|_{s=s_0}+
 \sum_{j=0}^{m-1} a_j^{-} \frac{\partial^j}{\partial s^j} (y^{1-k-s}) \big|_{s=s_0}
 = \sum_{j=0}^{m-1} a_j^{+}  (\log y)^j y^{s_0} + \sum_{j=0}^{m-1}  a_j^{-}  (-\log y)^j y^{1-k-s_0}
$$
for some constants $a_j^{+}, a_j^{-} \in \CC$.
\item[(2b)]
Suppose $n =0$ with $s_0 = \frac{1-k}{2}$ (equivalently,   with $\lambda = -(\frac{1-k}{2})^2$).
Then
$$
h_n(y) = \sum_{j=0}^{2m-1} a_j \frac{\partial^j}{\partial s^j} (y^{s}) \big|_{s=s_0}
= \sum_{j=0}^{2m-1} a_j (\log y)^j y^{\frac{k-1}{2}}.
$$
for some constants $a_j \in \CC$.
\end{enumerate}
\end{theorem}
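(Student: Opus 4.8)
\textbf{Proof proposal for Theorem \ref{th41}.}

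The plan is to reduce the polyharmonic case to the harmonic case (Theorem \ref{th40}) by analyzing the operator $\Delta_k - \lambda$ acting on each space of admissible Fourier coefficients, using the structure of generalized eigenspaces. Fix a nonzero Fourier index $n$ (the case $n=0$ is parallel but uses the explicit $y^{s}$, $y^{1-k-s}$ solutions instead of Whittaker functions). Writing $f_n(z) = h_n(y)e^{2\pi i n x}$, the equation $(\Delta_k-\lambda)^m f_n = 0$ becomes an ordinary differential equation in $y$; substituting $h_n(y) = y^{-k/2} w(4\pi|n|y)$ turns $\Delta_k - \mu$ into (a constant multiple of) the Whittaker operator shifted by the spectral parameter, so that $w$ solves $(\mathcal{L} - \lambda)^m w = 0$ where $\mathcal{L}$ is the relevant second-order operator whose eigenfunction at parameter $s$ is $W_{\frac{1}{2}\epsilon k,\, s+\frac{k-1}{2}}$. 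The key point is that $\mathcal{L}$ has, for each eigenvalue $\lambda = s_0(s_0+k-1)$, a one-dimensional space of solutions of moderate growth (this is exactly the content of Theorem \ref{th40}(1)), the other solution being exponentially large.

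The heart of the argument is then the following linear-algebra observation about the operator $\frac{\partial}{\partial s}$. Set $V(s) := W_{\frac{1}{2}\epsilon k,\, s+\frac{k-1}{2}}(4\pi|n|y)$, viewed as a function of $y$ depending smoothly on the parameter $s$. Since $(\Delta_k - s(s+k-1))V(s) = 0$ identically in $s$, differentiating $j$ times with respect to $s$ and using the Heisenberg-type commutator identity
$$
\Bigl[\Delta_k - s(s+k-1)I,\ \frac{\partial}{\partial s}\Bigr] = (1-k-2s)I
$$
quoted before the theorem, one shows by induction that $\partial_s^{j} V(s)\big|_{s=s_0}$ lies in the kernel of $(\Delta_k-\lambda)^{j+1}$ but not of $(\Delta_k-\lambda)^{j}$. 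Hence the functions $\{\partial_s^{j}V(s)|_{s=s_0}\}_{j=0}^{m-1}$ are $m$ linearly independent elements of the moderate-growth solution space of $(\Delta_k-\lambda)^m f_n = 0$. To see they span everything, one argues that the full solution space of the $2m$-th order ODE $(\mathcal{L}-\lambda)^m w = 0$ is $2m$-dimensional, spanned by $\partial_s^j$ of each of the two eigensolutions ($W$-Whittaker and $M$-Whittaker, say) for $j = 0,\dots,m-1$; the moderate-growth condition kills the $M$-Whittaker branch and all its $s$-derivatives (each $\partial_s^j M$ still grows like $e^{2\pi|n|y}$ up to polynomial factors), leaving precisely the $m$-dimensional span claimed. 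Assembling the pieces over all $n$ gives the stated form.

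For the $n=0$ cases one replaces the Whittaker substitution by the direct observation that $y^{s}$ and $y^{1-k-s}$ are the two eigensolutions of $\Delta_k$ restricted to $x$-independent functions with eigenvalue $s(s+k-1)$, and $\partial_s^{j}(y^s) = (\log y)^j y^s$; in case (2a) the two eigensolutions are distinct so one gets an $m$-dimensional contribution from each, and in the degenerate case (2b), $s_0 = \frac{1-k}{2}$, the two solution families merge (the characteristic "eigenvalue" $s(s+k-1)$ has a double root there as a function of $s$), so the generalized eigenspace of $(\Delta_k-\lambda)^m$ is $2m$-dimensional and spanned by $(\log y)^j y^{(k-1)/2}$ for $j = 0,\dots,2m-1$; all of these have polynomial growth, so none are excluded. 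I expect the main obstacle to be the bookkeeping in the Whittaker case: verifying cleanly that every $s$-derivative of the large ($M$-Whittaker) solution still violates moderate growth, and that no accidental linear combination of $\partial_s^j M$ and $\partial_s^j W$ terms can cancel the exponential growth — this requires a careful look at the asymptotics of $\partial_s^j W_{\kappa,\mu}(y)$ and $\partial_s^j M_{\kappa,\mu}(y)$ as $y\to\infty$, which can be extracted from the known asymptotic expansions in \cite[Sect. 16.12]{WW27}. Once that growth dichotomy is pinned down, the dimension count and the inductive commutator argument are routine.
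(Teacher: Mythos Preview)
Your approach is precisely the one the paper sketches: the paper does not give a proof here but defers it to the companion paper \cite{LR15S}, motivating it with the very commutator identity $[\Delta_k - s(s+k-1)I,\tfrac{\partial}{\partial s}] = (1-k-2s)I$ you use, together with the observation that $s$-derivatives of eigenfunctions produce polyharmonic functions (see the paragraph preceding the theorem and Remark~\ref{remark:43}). Your dimension count via the $2m$-dimensional ODE solution space split into $W$- and $M$-branches, with moderate growth killing the $M$-branch, is the intended mechanism, and your identification of the asymptotic bookkeeping for $\partial_s^j M_{\kappa,\mu}$ as the one genuinely technical step is accurate.

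One small caution: your inductive claim that $\partial_s^j V|_{s_0}$ lies in $\ker(\Delta_k-\lambda)^{j+1}\setminus\ker(\Delta_k-\lambda)^j$ uses $(\Delta_k-\lambda)\partial_sV|_{s_0}=(2s_0+k-1)V(s_0)$, which vanishes exactly when $s_0=\tfrac{1-k}{2}$. At that point $W_{\kappa,\mu}$ is even in $\mu$, so odd $s$-derivatives collapse and the $\{\partial_s^jW\}_{j=0}^{m-1}$ are no longer independent; one must instead parametrize by $\lambda$-derivatives (or take $s$-derivatives up to order $2m-1$) to recover the full $m$-dimensional moderate-growth space. This is a known degeneracy (cf.\ Remark~\ref{remark:43}(1)) and does not affect the cases actually used later in the paper, but you should flag it when writing out the argument in full.
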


%%%%%%%%%%%%%%%%
% Remark 4.3
%%%%%%%%%%%%%%%%%%%
\begin{remark}\label{remark:43}
\begin{enumerate}
\item[(1)]  For each  $\lambda \in \CC$ 
there are two choices of $s$ except $\lambda = -(\frac{1-k}{2})^2$,
where there is a unique choice. These  choices
of $s$  correspond to the variable switch $(\kappa, \mu)$ to $(\kappa, - \mu)$
which corresponds to $s \mapsto 1-(k+s)$. Either one of the choices  leads
to a basis  of  the same vector space of functions. 

\item[(2)] If no growth conditions are imposed on $f_n(z)$ at the cusp then, when $n \ne 0$, 
 additional terms are allowed in the Fourier expansion, which involve
   a suitable linearly independent Whittaker function,  call it
 $\tM_{\frac{\epsilon k}{2}, s + \frac{k-1}{2}}(y)$, and its derivatives with respect to $s$, 
    see \cite{LR15S} for further discussion.
  %%%%%%%%%%%%%%%%%%%%%%%%%
  % Theorem number in{LR15S} to be added.
  %%%%%%%%%%%%%%%%%%%%%%%
  The Whittaker $W$-functions $W_{\kappa, \mu}(y)$ have modulus going to $0$ exponentially fast 
  as $y \to \infty$, while the indpendent functions $\tM_{\kappa, \mu}(y)$ have modulus growing
  exponentially fast in $y$ as the real variable $y \to \infty$.
 \end{enumerate}
\end{remark}

%%%%%%%%%%%%%%%%%
%%%%%%
% Subsection 4.3 FOURIER expansions
%%%%%%%
\subsection{Polyharmonic Fourier series expansions}\label{sec42}

%%%%%%%%%%%%%%

Theorem \ref{th41} 
 implies a  Fourier expansion formula valid for  all $m$-harmonic Maass forms
 with shifted eigenvalue $\lambda$.  We 
 introduce  a new notation for these functions. For $n \ne 0$ set $\epsilon = \frac{n}{|n|} \in  \{ \pm 1\}$,
 and for each $m \ge 0$  set
 \begin{equation}\label{u-emm2}
u_{\epsilon, k,|n|}^{[m], -}(y; s_0) :=
y^{-\frac{k}{2}} \frac{\partial^{m}}{\partial s^{m}}(W_{\kappa, s+ \frac{k-1}{2}})(4 \pi |n|y)|_{s=s_0}.
\end{equation}
(Here the $-$ superscript refers to this function having rapid decay at the cusp;  there is
an independent  solution $u_{\epsilon, k,|n|}^{[m], +}(y; s_0)$ having rapid growth at the cusp, cf. \cite{LR15S}.)
For $n=0$ with $s_0 \ne \frac{1-k}{2}$,
for each $m \ge 0 $ set
\begin{eqnarray*}
u_{ k,0}^{[m], +}(y; s_0) &:=& \frac{\partial^{m}}{\partial s^{m}}y^s \big|_{s=s_0}= \quad (\log y)^m\, y^{s_0}\\
u_{ k, 0}^{[m],-}(y; s_0) &:= &\frac{\partial^{m}}{\partial s^{m}}y^{1-k-s} \big|_{s=s_0} 
= (-1)^m (\log y)^m y^{1-k-s_0}.
\end{eqnarray*}
For $n=0$ with $s_0 = \frac{1-k}{2}$  set
 \begin{eqnarray*}
u_{k,0}^{[m], +} (z; \frac{1-k}{2}) &=&(\log y)^{2m-2} y^{\frac{1-k}{2}},\quad\mbox{and} \quad
u_{k,0}^{[m], -} (z; \frac{1-k}{2}) = (\log y)^{2m-1} y^{\frac{1-k}{2}}.
\end{eqnarray*}
Then we have the following result.

%****************************
%
% Theorem 4.2 (3.12)
%
%****************************

\begin{theorem}\label{lem:abstractFourier}
{\rm (Fourier expansion in $V_k^m(\lambda)$)}
Let $f(z) \in V_k^m(\lambda) $ for some $k \in 2\ZZ$.
Let  $m \ge 1$,
and fix an $s \in \CC$ with $\lambda= s(s+k-1)$. 
Then the Fourier expansion of $f(z)$ exists and  has the form
$$
f(z) =  \sum_{j=0}^{m-1} \big(c_{0, j}^{+} u_{k, 0}^{[j],+}(y)  + c_{0,j}^{-} u_{k, 0}^{[j],-}(y)  \big)
  + \sum_{\epsilon \in\{ \pm 1\}}\Big( \sum_{n=1}^{\infty} \sum_{j=0}^{m-1}  c_{\epsilon, j, n}^{-} u_{\epsilon, k, n}^{[j], -}(y) e^{ 2\pi i (\epsilon n)x}\Big),
  $$
 in which  $c_{\epsilon, j}^{\pm}$ and $c_{\epsilon, j, n}^{-}$ are constants. 
 This Fourier expansion converges absolutely and uniformly to $f(z)$ on compact subsets of $\HH$.
\end{theorem}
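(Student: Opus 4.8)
The plan is to prove Theorem~\ref{lem:abstractFourier} by reducing the statement about $f(z)$ to a collection of statements about its individual Fourier coefficients, each of which is governed by Theorem~\ref{th41}. The key preliminary point is that $f(z)$, being of moderate growth and satisfying $f|_k\smatr{1}{1}{0}{1}=f$ (which follows from the modular invariance condition (1) applied to the translation matrix), admits a Fourier expansion $f(z)=\sum_{n\in\Z} f_n(y)e^{2\pi i n x}$ with $f_n(y)=\int_0^1 f(x+iy)e^{-2\pi i n x}\,dx$, converging nicely on compacta of $\HH$; this is a standard fact for periodic real-analytic functions, and moderate growth of $f$ gives a uniform-in-$x$ bound $f(x+iy)=O(y^c)$ that passes to each $f_n(y)$.

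The heart of the argument is to show each $f_n(y)e^{2\pi i n x}$ individually satisfies $(\Delta_k-\lambda)^m f_n(y)e^{2\pi i n x}=0$. First I would observe that $\Delta_k$ commutes with the translation $z\mapsto z+1$, hence $\Delta_k$ acts on the Fourier expansion term-by-term: $\Delta_k\big(f_n(y)e^{2\pi i n x}\big)=\big((\Delta_k)_n f_n\big)(y)e^{2\pi i n x}$ where $(\Delta_k)_n$ is the ordinary second-order ODE operator in $y$ obtained by substituting $\partial/\partial x\mapsto 2\pi i n$. Applying this $m$ times and using that $(\Delta_k-\lambda)^m f=0$ with uniqueness of Fourier coefficients yields $\big((\Delta_k)_n-\lambda\big)^m f_n=0$ for every $n$. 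Then Theorem~\ref{th41} identifies, for each $n$ and each sign $\epsilon=\sgn{n}$, the finite-dimensional space of candidate functions $h_n(y)$; the moderate growth hypothesis on $f$ forces exactly the decaying/polynomially-bounded solutions, i.e.\ it kills the $u^{[j],+}_{\epsilon,k,n}$ (the exponentially growing Whittaker $M$-pieces discussed in Remark~\ref{remark:43}(2)) for $n\neq0$ while retaining the $u^{[j],-}_{\epsilon,k,n}$, and for $n=0$ retains both $u^{[j],\pm}_{k,0}$ since those are only polynomial in $y$. Collecting the resulting expansions over all $n$ and relabeling gives precisely the claimed formula, with the stated constants $c^{\pm}_{0,j}$ and $c^{-}_{\epsilon,j,n}$.

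Finally, for the convergence claim I would argue that absolute and locally uniform convergence of the original Fourier series of the real-analytic function $f$ is automatic, and that since each coefficient $f_n(y)$ has now been shown to lie in a fixed finite-dimensional span of the explicit functions $u^{[j],\pm}$, the termwise description matches a convergent series; one standard way to make this airtight is to estimate $f_n(y)$ via integration by parts in $x$ (using smoothness of $f$) to get rapid decay in $n$ for each fixed $y$, with the bounds uniform on compact $y$-intervals, which yields the stated absolute and uniform convergence on compact subsets of $\HH$.

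I expect the main obstacle to be the bookkeeping around the degenerate cases, specifically ensuring the $n=0$ term is handled correctly when $\lambda=-(\tfrac{1-k}{2})^2$ (case (2b) of Theorem~\ref{th41}), where the exponents coincide and logarithmic terms of degree up to $2m-1$ appear rather than the generic $m$ terms of each of two types; one must check that the notation $u^{[j],\pm}_{k,0}(y;\tfrac{1-k}{2})$ as defined above is set up so that the sum $\sum_{j=0}^{m-1}(c^+_{0,j}u^{[j],+}_{k,0}+c^-_{0,j}u^{[j],-}_{k,0})$ really does range over all of $\operatorname{span}\{(\log y)^\ell y^{(1-k)/2}:0\le\ell\le 2m-1\}$. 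A secondary subtlety is justifying the termwise action of $\Delta_k$ on the Fourier series — differentiating under the integral sign in $f_n(y)=\int_0^1 f(x+iy)e^{-2\pi inx}\,dx$ — which follows from smoothness of $f$ and dominated convergence on compacta but should be stated. Everything else is routine given Theorem~\ref{th41}.
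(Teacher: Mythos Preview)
Your proposal is correct and follows essentially the same approach as the paper: establish periodicity from the $\smatr{1}{1}{0}{1}$-invariance, pass moderate growth to each Fourier coefficient, use separation of variables to see that each coefficient is individually annihilated by $(\Delta_k-\lambda)^m$, invoke Theorem~\ref{th41}, and appeal to real-analyticity for the convergence statement. If anything, your version is more careful about the technical subtleties (the degenerate constant term, differentiating under the integral) than the paper's own proof.
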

\begin{proof}
A weight $k$ modular form on $PSL(2, \ZZ)$ 
has $f(z) = f(z+1)$, using $\gamma =\smatr{1}{1}{0}{1}$,
%%%%%%%%%%%%%%%%%%%%%%%%%%%%%%%
%$f( \smatr{1}{1}{0}{1} \cdot z) = f(x+1+iy) = f(x+iy)$,  $f(c)$
%%%%%%%%%%%%%%%%%%%%%%%%%%%%%%%%%
whence it has a Fourier expansion of the form 
$f(z) \sim \sum_{n\in \Z} h_n(y) e^{2\pi i n x}$
with coefficient functions
$$h_n(y) = \int_{0}^1 f(z) e^{-2\pi i n x} dx.$$ 
Because  $f(z)$ has moderate growth of order $O(y^K)$ as $y\to \infty$,
for some fixed finite $K$, 
we conclude that each $h_n(y)$ separately has moderate growth of the same order. Additionally,  
the shifted polyharmonic condition $(\Delta_k -\lambda)^m f(z) = 0$  implies that 
 each of its Fourier coefficients separately satisfy
$$
(\Delta_k- \lambda)^m \( h_n(y) e^{2\pi i n x} \) =0.
$$
This fact holds by separation of variables, since
the application of   $(\Delta_k- \lambda)$ to any function  $f_1(y) e^{2\pi i n x}$ yields
another function $f_2(y) e^{2 \pi i n x}$, whence 
$(\Delta_k- \lambda)^m\(h_n(y) e^{2\pi i n x} \)= \tilde{h}_n(y) e^{2 \pi i nx}$, whence
$$
0= (\Delta_k- \lambda)^m (f(x+iy)) = \sum_{n \in \ZZ} (\Delta_k- \lambda)^m\(h_n(y) e^{2\pi i n x} \) 
= \sum_{n \in \ZZ} \tilde{h}_n(y) e^{2 \pi i nx}.
$$
The uniqueness of Fourier series expansions of a real-analytic function then implies that all $\tilde{h}_n(y) =0$.
Finally 
Theorem \ref{th41} applies to each $f_n(z)$ separately to give an expansion of the given form.

Finally the absolute and uniform convergence on compact subsets follows from the 
known real-analyticity
of such $f(z)$.
\end{proof}

%%%%%%%%%%%%%%%%%%%%%%%%%%%%%
%
% Subsection 4.4
%%%%%%%%%%%%%%%%%%%%%%%%%%%
\subsection{$1$-harmonic  Fourier expansions}\label{sec43}
We   give a  second version of the Fourier expansion for the 
special case $m=1$ and eigenvalue $\lambda=0$,
involving incomplete Gamma functions.  It  is a special case of a Fourier
expansion for  $1$-harmonic Maass forms that appears in the literature.
We relate it to the  Fourier expansion version given above 
in Theorem \ref{lem:abstractFourier} in terms of Whittaker functions.

%****************************
%
% Lemma 4.3 (3.13)
%
%****************************

\begin{lemma}\label{lem:FourierExp1Harmonic} 
For any $k \in 2 \ZZ$, if  $f(z) \in V_k^1(0)$, then it has a Fourier expansion of form
\begin{equation}\label{gamma-fourier}
f(z) = \sum_{n=1}^{\infty}  b_{-n} \Gamma(1-k, 4 \pi |n|y) e^{-2\pi i nz}
+ \big( b_0 y^{1-k} + a_0\big) + \sum_{n =1}^{\infty} a_n e^{2 \pi i n z}
\end{equation}
in which $\Gamma(\kappa, y) = \int_{y}^\infty t^{\kappa -1} e^{-t} dt$ denotes the  incomplete Gamma function.  
\end{lemma}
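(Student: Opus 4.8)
The plan is to derive this as the $m=1$, $\lambda=0$ specialization of the general Fourier expansion in Theorem~\ref{lem:abstractFourier}, and then convert the Whittaker-function building blocks $u^{[0],\pm}_{\bullet}$ into incomplete Gamma functions using standard special-function identities. First I would invoke Theorem~\ref{lem:abstractFourier} with $m=1$: since the sums over $j$ collapse to $j=0$ and we may take $s=s_0=0$ (so that $\lambda = s(s+k-1)=0$), every $f(z)\in V_k^1(0)$ has an expansion
$$
f(z) = c_{0,0}^+ u_{k,0}^{[0],+}(y) + c_{0,0}^{-} u_{k,0}^{[0],-}(y)
+ \sum_{\epsilon\in\{\pm1\}}\sum_{n=1}^\infty c_{\epsilon,0,n}^{-}\, u_{\epsilon,k,n}^{[0],-}(y)\, e^{2\pi i(\epsilon n)x}.
$$
Here $s_0 = 0 \ne \tfrac{1-k}{2}$ for $k\in 2\ZZ$ (even $k$ never equals an odd integer), so the $n=0$ part is in case (2a): $u_{k,0}^{[0],+}(y) = y^{0} = 1$ and $u_{k,0}^{[0],-}(y) = y^{1-k}$. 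This already produces the middle term $b_0 y^{1-k} + a_0$ of \eqref{gamma-fourier}, with $a_0 = c_{0,0}^+$ and $b_0 = c_{0,0}^-$.

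Next I would treat the $n\ne 0$ terms. From \eqref{u-emm2} with $m=0$, $s_0=0$, and $\kappa = \tfrac{1}{2}\epsilon k$, we have
$$
u^{[0],-}_{\epsilon,k,n}(y;0) = y^{-k/2}\, W_{\frac{1}{2}\epsilon k,\ \frac{k-1}{2}}(4\pi|n|y).
$$
The key special-function fact is the identification of this particular Whittaker $W$-function with an incomplete Gamma function. Using the standard formula $W_{\kappa,\mu}(y)$ when $\tfrac12 - \mu + \kappa$ is a nonpositive integer (or more directly $\Gamma(a,y) = y^{(a-1)/2}e^{-y/2}W_{(a-1)/2,\,a/2}(y)$, see \cite[Sect.~16.12]{WW27} or \cite{OLBC10}), one gets two cases according to $\epsilon = \pm 1$. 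For $\epsilon = +1$ (so $n>0$, $\kappa = k/2$, $\mu = \tfrac{k-1}{2}$): here $\tfrac12 - \mu + \kappa = 1$, and the Whittaker function degenerates to an elementary exponential, $y^{-k/2}W_{k/2,(k-1)/2}(4\pi n y) = (4\pi n)^{?}\, e^{-2\pi n y}$ up to a power of $y$ that combines with $e^{2\pi i n x}$ to give a holomorphic term $a_n e^{2\pi i n z}$; this yields the third sum in \eqref{gamma-fourier}. For $\epsilon = -1$ (so $n$ replaced by $|n|$, the term $e^{-2\pi i |n| x}$, $\kappa = -k/2$): here $y^{-k/2}W_{-k/2,(k-1)/2}(4\pi|n|y)$ is, up to a constant and a factor $e^{2\pi|n|y}$, exactly $\Gamma(1-k,4\pi|n|y)$, producing the first sum $\sum b_{-n}\Gamma(1-k,4\pi|n|y)e^{-2\pi i n z}$. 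Absorbing all the resulting multiplicative constants into redefined coefficients $a_n$, $b_{-n}$ gives the stated form.

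The main obstacle I anticipate is bookkeeping in the special-function step: getting the parameter matches exactly right (which of $\kappa = \pm k/2$, $\mu = \tfrac{k-1}{2}$ goes with $n>0$ versus $n<0$), tracking the powers of $y$ and the factors of $4\pi|n|$ that arise when converting between $W_{\kappa,\mu}$-normalization and $\Gamma(\kappa,y)$-normalization, and confirming that the exponential factors $e^{\mp 2\pi|n|y}$ combine correctly with $e^{\pm 2\pi i n x}$ to produce the clean forms $e^{2\pi i n z}$ and $e^{-2\pi i n z}$ in \eqref{gamma-fourier}. None of this is conceptually hard, but it requires care with conventions; a sign or normalization slip would be easy to make. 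I would also note that this lemma is essentially the classical Fourier expansion of weight $k$ harmonic Maass forms (holomorphic part plus nonholomorphic part built from incomplete Gammas), so as a sanity check the final expansion should match the standard form in the harmonic Maass forms literature, e.g. \cite{BF04}, \cite{onoCD}.
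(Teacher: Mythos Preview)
Your proposal is correct and follows essentially the same route as the paper: specialize Theorem~\ref{lem:abstractFourier} to $m=1$, $s_0=0$, then convert the Whittaker blocks via standard identities---the paper uses \cite[(13.18.5)]{NIST} for the $\epsilon=-1$ terms (giving the incomplete Gamma) and \cite[(13.4.31), (13.18.17)]{NIST} with the Laguerre polynomial $L_0^{(\alpha)}=1$ for the $\epsilon=+1$ terms (giving the pure exponential), and then relabels the constants exactly as you suggest. The bookkeeping you flag as the main obstacle is indeed the only content, and the paper carries it out explicitly with the same parameter choices $\kappa=\pm k/2$, $\mu=\tfrac{1-k}{2}$ (equal to your $\tfrac{k-1}{2}$ by the symmetry $W_{\kappa,\mu}=W_{\kappa,-\mu}$).
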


\begin{proof}
Since $f(z)$ has moderate growth
the Fourier expansion of  Theorem \ref{lem:abstractFourier} applies, taking $m=1$.
Thus we have
$$
f(z) = y^{-\frac{k}{2}} \Big( \sum_{n=1}^{\infty} \tilde{b}_{-n} W_{-\frac{k}{2}, \frac{1-k}{2}}(4 \pi |n|y) e^{-2 \pi i n x}
+ \big( \tilde{b}_0 y^{1-k} + \tilde{a}_0\big) +
\sum_{n=1}^{\infty} \tilde{a}_n W_{\frac{k}{2}, \frac{1-k}{2}}( 4 \pi |n| y) e^{2 \pi i n x} \Big).
$$
for certain complex constants $\tilde{a}_n, \tilde{b}_{-n}$ for all $n \ge 0$.

We assert that for $n \le -1$, 
\begin{equation}\label{claim1}
(4 \pi |n|y)^{-\frac{k}{2}}W_{-\frac{k}{2}, \frac{1-k}{2}}(4 \pi |n|y)= 
\Gamma(1-k, 4 \pi |n|y) e^{- 2 \pi n y}=
\Gamma(1-k, 4 \pi |n|y) e^{2 \pi |n| y}.
\end{equation}
To prove the assertion we use the identity \cite[(13.18.5)]{NIST}
$$
z^{\frac{1}{2} - \mu} e^{\frac{z}{2}}\Gamma(2 \mu, z)= W_{\mu - \frac{1}{2}, \mu}(z),
$$
in which we take $\mu= \frac{1-k}{2}$ and $z= 4 \pi |n|y$ to obtain
$$
W_{-\frac{k}{2},\frac{1-k}{2}}(4 \pi |n|y)= (4 \pi |n|y)^{\frac{k}{2}} e^{2 \pi |n| y} \Gamma (1- k, 4 \pi |n|y),
$$
from which \eqref{claim1} follows.
%%%%%%%%%%%%%%%%%%%%%%%%%%
%(One may note that $W_{\kappa, \mu) (y) \sim e^{-\frac{y}{2}} y^k$ as $y \to \infty$
%whence $\Gamma(1-k, 4 \pi |n|y)$ \sim e^{-4 \pi |n|y} (4 \pi |n|y)^{k/2}$ has rapid decrease as $y \to \infty$.)
%%%%%%%%%%%%%%%%%%%%%

We next assert that  for $n \ge 1$ we have
\begin{equation}\label{claim2}
(4 \pi ny)^{-\frac{k}{2}}W_{\frac{k}{2}, \frac{1-k}{2}}(4 \pi ny)= e^{- 2 \pi n y}.
\end{equation}
To show this  we use the 
identity \cite[(13.4.31)]{NIST}
$$
W_{\kappa, \mu} (z) = W_{\kappa, -\mu}(z).
$$
and the identity \cite[(13.18.17)]{NIST} valid for integer $n \ge 0$ and integer $\alpha$ that
$$
W_{\frac{\alpha+1}{2} +n, \frac{\alpha}{2}}(z) = (-1)^n n! e^{-\frac{z}{2}} z^{\frac{\alpha +1}{2}}L_n^{(\alpha)}(z),
$$
with $L_n^{(\alpha)}(z)$ being a (modified) Laguerre polynomial of degree $n$. We choose $n=0$ to obtain
$$
W_{\frac{\alpha+1}{2}, -\frac{\alpha}{2}}(z)=W_{\frac{\alpha+1}{2}, \frac{\alpha}{2}}(z)=  e^{-\frac{z}{2}} z^{\frac{\alpha +1}{2}},
$$
since $L_0^{(\alpha)}(z) =1.$ 
We take $\alpha=k-1$ and $z= 4 \pi n y$ to obtain \eqref{claim2}.

Combining the two assertions, the 
coefficients of the new Fourier expansion \eqref{gamma-fourier} are
related to the old by $a_0 = \tilde{a}_0$,
$b_0 = \tilde{b}_0$, and for all $n \ge 1$,  
\begin{equation}\label{fourier-convert}
a_n = (4 \pi n)^{\frac{k}{2}}\, \tilde{a}_n \quad \mbox {and} \quad
 b_{-n} = (4 \pi |n|)^{\frac{k}{2}} \,\tilde{b}_{-n}. 
\end{equation}
\end{proof}
%
%%%%%%%%%%%%%%
% REmark 
% %% %%%%%%%%%%%%%%%%%%%
\begin{remark}
A more general form of this formula for weak Maass forms appears in 
  Bruinier and Funke \cite{BF04}, see Section \ref{sec61}.
(Compare also  the discussion in
  the introduction of \cite{DIT13}.)
  Bruinier and Funke 
note in their equations (3.2a) and (3.2b)  and the sentence before those equations 
 that   any $f(z) \in M_k^{!}$
 %%%%%%%%%%%%%%%%%%%%%%%%%%%%%%%%%%%%%%%%%
 %, That is, in the space of  weight $k \in 2 \ZZ$ harmonic Maass forms  with at most linear exponential growth 
% as $y\to \infty$ at the cusp,
%%%%%%%%%%%%%%%%%%%%%%%%%%%%%%%%%%%%%%%%%
 has a Fourier expansion of the form
\begin{equation}\label{eq413}
f(z) = b_0y^{1-k} + 
\sum_{n\gg 0} a_n e^{2\pi i n z} + \sum_{n \ll 0} b_n \Gamma(1-k, 4\pi \abs{n} y) e^{2\pi i n z}.
\end{equation}
The condition of at most polynomial growth made above
imposes the strengthened restrictions on the ranges of summation in the two parts.
%(Compare  also the discussion in the introduction of \cite{DIT13}.)
\end{remark}

%%%%%%%%%%%%%%%%%%%%%%%%%%%%%%%%%%%%%%%%%%%%%%%%%%%
%%                                                %%
%%      Subsection  5 XI OPERATOR           %%
%%                                                %%
%%%%%%%%%%%%%%%%%%%%%%%%%%%%%%%%%%%%%%%%%%%%%%%%%%%
\section{The $\xi$-Operator} \label{sec:newsec5}

%%%%%%%%%%%%%%%%%
%%      Subsection  5.1 Properties of XI OPERATOR           %%
%%                                                %%
%%%%%%%%%%%%%%%%%
\subsection{ Properties of the differential operators $\xi_k$}\label{sec:DifferentialOperators}

Bruinier and Funke \cite[Proposition 3.2]{BF04} introduce the operator
$$
\xi_k(f)(z) = 2i y^k \overline{\frac{\partial}{\partial \bar{z}}f(z)},
$$
which is related to the Maass raising and lowering operators.

We state  basic lemmas concerning  the differential operators $\xi_k$,
mentioned in the introduction to \cite{DIT13}.

%****************************
%
% Lemma  5.1 (former 4.5)
%
%****************************

\begin{lemma}\label{lem:xi_commute}
Let $f : \H \to \CC$ be a $C^{\infty}$-function of two
real variables $(x, y)$, and set $z=x+iy$. Then  for any $\gamma = \smatr{a}{b}{c}{d} \in \SL_2(\R)$ 
and for any integer $k$, 
$$
\xi_k \( (cz+d)^{-k} f( \gamma \cdot z)\) = (cz+d)^{k-2} (\xi_k f)(\gamma \cdot z).
$$
\end{lemma}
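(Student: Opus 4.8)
The statement is a covariance (or ``weight-change'') property of the operator $\xi_k = 2iy^k \overline{\partial/\partial\bar z}$ under the $\mathrm{SL}_2(\R)$-action. The natural approach is a direct computation using the chain rule for the Wirtinger derivatives together with the standard identities for the factor of automorphy $j(\gamma,z) = cz+d$. First I would set $w = \gamma\cdot z = \frac{az+b}{cz+d}$ and record the two facts I will need: $\frac{\partial w}{\partial z} = (cz+d)^{-2}$ (and hence $\frac{\partial w}{\partial\bar z}=0$, i.e.\ $w$ is holomorphic in $z$), and $\Im(w) = \frac{y}{|cz+d|^2}$. The holomorphy of $w$ is the key structural input: it means that when I apply $\frac{\partial}{\partial\bar z}$ to $(cz+d)^{-k} f(w)$, the factor $(cz+d)^{-k}$ is antiholomorphic-independent — more precisely $\frac{\partial}{\partial\bar z}(cz+d)^{-k}=0$ since $cz+d$ is holomorphic — so only the $f(w)$ factor is differentiated, and by the chain rule for Wirtinger derivatives $\frac{\partial}{\partial\bar z}\big(f(w)\big) = \big(\frac{\partial f}{\partial\bar w}\big)(w)\cdot\overline{\frac{\partial w}{\partial z}} = \big(\frac{\partial f}{\partial\bar w}\big)(w)\cdot \overline{(cz+d)^{-2}}$, using that $\frac{\partial w}{\partial\bar z}=0$.

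Putting this together,
\[
\frac{\partial}{\partial\bar z}\Big((cz+d)^{-k} f(w)\Big) = (cz+d)^{-k}\,\Big(\frac{\partial f}{\partial\bar w}\Big)(w)\,\overline{(cz+d)}^{-2}.
\]
Now I multiply by $2iy^k$ and take the complex conjugate of the $\overline{\partial/\partial\bar z}$-part as required by the definition of $\xi_k$. Conjugating the displayed line gives $\overline{(cz+d)}^{-k}\,\overline{\big(\frac{\partial f}{\partial\bar w}\big)(w)}\,(cz+d)^{-2}$, and then
\[
\xi_k\Big((cz+d)^{-k}f(w)\Big) = 2iy^k\,\overline{(cz+d)}^{-k}\,(cz+d)^{-2}\,\overline{\Big(\frac{\partial f}{\partial\bar w}\Big)(w)}.
\]
On the other side, $(\xi_k f)(w) = 2i\,(\Im w)^k\,\overline{\big(\frac{\partial f}{\partial\bar w}\big)(w)} = 2i\,\frac{y^k}{|cz+d|^{2k}}\,\overline{\big(\frac{\partial f}{\partial\bar w}\big)(w)}$. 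So the claimed identity $\xi_k\big((cz+d)^{-k}f(\gamma\cdot z)\big) = (cz+d)^{k-2}(\xi_k f)(\gamma\cdot z)$ reduces to the purely algebraic identity $y^k\,\overline{(cz+d)}^{-k}(cz+d)^{-2} = (cz+d)^{k-2}\cdot \frac{y^k}{|cz+d|^{2k}}$, which holds since $|cz+d|^{2k} = (cz+d)^k\,\overline{(cz+d)}^{k}$. Comparing both sides confirms the lemma.

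\textbf{Where the care is needed.} There is no deep obstacle here; the only thing to be careful about is bookkeeping with complex conjugates — the operator $\xi_k$ conjugates \emph{after} differentiating, so one must track which occurrences of $cz+d$ are conjugated and which are not, and likewise apply the conjugated chain rule correctly to $f(w)$. I would also note, for cleanliness, that since $f$ is merely $C^\infty$ (not holomorphic) it is cleanest to regard $f$ as a function of the two real variables and write $\frac{\partial}{\partial\bar z} = \frac12(\frac{\partial}{\partial x}+i\frac{\partial}{\partial y})$, verifying the chain-rule step $\frac{\partial}{\partial\bar z}(f\circ\gamma) = (f_{\bar w}\circ\gamma)\,\overline{w_z} + (f_w\circ\gamma)\,w_{\bar z}$ and then using $w_{\bar z}=0$; this is the standard Wirtinger chain rule and requires only that $w$ be a holomorphic function of $z$, which it is. The identity as stated is for $\gamma\in\mathrm{SL}_2(\R)$, so I do not even use integrality of the entries, only $ad-bc=1$ (implicitly, via $w_z = (ad-bc)(cz+d)^{-2} = (cz+d)^{-2}$).
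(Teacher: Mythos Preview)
Your proof is correct and is precisely the direct calculation the paper alludes to (the paper itself simply states ``This is a calculation'' and defers to the companion paper). Your bookkeeping with the Wirtinger chain rule, the holomorphy of $w=\gamma\cdot z$, and the identity $\Im(\gamma\cdot z)=y/|cz+d|^2$ is accurate, and the final algebraic reduction using $|cz+d|^{2k}=(cz+d)^k\overline{(cz+d)}^k$ is exactly what is needed.
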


\begin{proof}
This is a  calculation, see for example \cite{LR15S}. 
%%%%%%%%%%%%%%%%%%%%%
% Theorem ref. in }LR15S} to be added
%%%%%%%%%%%%%%%%%%%%%%%%%
\end{proof}

Lemma \ref{lem:xi_commute} implies that if $f(z)$ is a weight $k$ 
(holomorphic or non-holomorphic) modular form for a discrete subgroup $\Gamma$
of $SL(2, \ZZ)$, with no growth conditions imposed on any cusp, then $\xi_k f$ is a
weight $2-k$  modular form for $\Gamma$, again imposing no growth conditions at any cusp.

%****************************
%
% Lemma 5.2 [formerly 3.8]
%
%****************************
\begin{lemma}\label{lem:DeltaFactor}
The operator $\Delta_k=y^2 \( \frac{\partial^2}{\partial x^2} + \frac{\partial^2}{\partial y^2}\) - i ky\(\frac{\partial}{\partial x} + i \frac{\partial}{\partial y}\)$
 factorizes as
$$
\Delta_k = \xi_{2-k} \xi_k. 
$$
Thus $\Delta_{2-k} = \xi_{k} \xi_{2-k}$.
\end{lemma}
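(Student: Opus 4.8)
The plan is to verify the factorization $\Delta_k = \xi_{2-k}\xi_k$ by a direct computation in the real coordinates $(x,y)$, using the definition $\xi_k(f) = 2iy^k\,\overline{\partial_{\bar z} f}$ with the Wirtinger derivative $\partial_{\bar z} = \tfrac12(\partial_x + i\partial_y)$. First I would unwind the composition: given a smooth $f$, set $g := \xi_k f = 2iy^k\,\overline{\partial_{\bar z} f}$, and then compute $\xi_{2-k} g = 2iy^{2-k}\,\overline{\partial_{\bar z} g}$. The key observation making this tractable is that complex conjugation turns $\partial_{\bar z}$ into $\partial_z = \tfrac12(\partial_x - i\partial_y)$, so that $\overline{\partial_{\bar z} g} = \partial_z(\bar g)$, and $\bar g = -2iy^k\,\partial_{\bar z} f$ (since $y$ is real). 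Hence $\xi_{2-k}\xi_k f = 2iy^{2-k}\,\partial_z\big(-2iy^k\,\partial_{\bar z} f\big) = 4y^{2-k}\,\partial_z\big(y^k\,\partial_{\bar z} f\big)$.

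The next step is to expand $\partial_z(y^k\,\partial_{\bar z} f)$ by the product rule. Since $\partial_z y = \tfrac12(\partial_x - i\partial_y)y = -\tfrac{i}{2}$, we get $\partial_z(y^k) = -\tfrac{i}{2}k y^{k-1}$, so
$$
\xi_{2-k}\xi_k f = 4y^{2-k}\Big( y^k\,\partial_z\partial_{\bar z} f - \tfrac{i}{2}k y^{k-1}\,\partial_{\bar z} f\Big) = 4y^2\,\partial_z\partial_{\bar z} f - 2iky\,\partial_{\bar z} f.
$$
It then remains to identify these two terms with the two pieces of $\Delta_k$. The standard identity $\partial_z\partial_{\bar z} = \tfrac14(\partial_x^2 + \partial_y^2)$ gives $4y^2\,\partial_z\partial_{\bar z} f = y^2(\partial_x^2 + \partial_y^2)f$, matching the first part of $\Delta_k$. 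And $2iky\,\partial_{\bar z} f = 2iky\cdot\tfrac12(\partial_x + i\partial_y)f = iky(\partial_x + i\partial_y)f$, so $-2iky\,\partial_{\bar z} f = -iky(\partial_x + i\partial_y)f$, matching the second part of $\Delta_k$ exactly as written in the definition. This proves $\Delta_k = \xi_{2-k}\xi_k$.

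Finally, the identity $\Delta_{2-k} = \xi_k\xi_{2-k}$ follows immediately by substituting $k \mapsto 2-k$ in the formula just proved, since $2-(2-k) = k$. I do not expect any genuine obstacle here; the only thing to watch carefully is the bookkeeping of the factors of $i$ and $\tfrac12$ coming from the two conjugations and the two Wirtinger derivatives, together with remembering that $y$ is real so conjugation acts only on the derivative operator and on $f$. (One should also note that $\Delta_k$ as written does not commute past the conjugations in an obvious way, which is precisely why the computation is phrased in terms of $\partial_z$ and $\partial_{\bar z}$ rather than $\partial_x,\partial_y$ throughout.)
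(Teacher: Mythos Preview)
Your proof is correct and is precisely the ``easy calculation'' the paper alludes to without writing out; you have simply supplied the details of the direct verification via Wirtinger derivatives, which is the intended approach.
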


\begin{proof}
This is an easy calculation. 
\end{proof}

%%%%%%%%%%%%%%%%%
%%                                                %%
%%      Subsection  5.2 Fourier coefficients          %%
%%                                                %%
%%%%%%%%%%%%%%%%%
\subsection{Action of $\xi_k$ on Fourier coefficient functions}\label{sec52}

One can directly compute $\xi_k$-operator action on the Fourier 
coefficient functions  in Theorem \ref{th40}.
The  operator $\xi_k$ maps weight $k$ forms  to weight $2-k$ forms, takes  Fourier coefficient $n$ to Fourier coefficient $-n$,
and converts the holomorphic parameter  $s$ to the anti-holomorphic parameter $-\bar{s}$.

% %%%%%%%%%%%%%%%%%%%%%%%%%%%%%%
%Note that the $\xi_k$-operator has an anti-holomorphic action!
%%%%%%%%%%%%%%%%%%%%%%%%%%%%%%%

%****************************
%
% Lemma  5.3 [NEW, formerly 5.4]
%
%****************************

\begin{lemma}\label{le54}

(1) Let $n \le -1$. Then
$$
\xi_k\big( y^{-\frac{k}{2}} W_{-\frac{k}{2}, s + \frac{k-1}{2}}(4 \pi |n|y) e^{2 \pi i n x}\big) =
- y^{-(\frac{2-k}{2})} W_{\frac{2-k}{2}, -\bar{s} + \frac{1-k}{2}}(4 \pi |n|y) e^{-2\pi i nx}.
$$

(2) Let $n \ge 1$. Then
$$
\xi_k\big( y^{-\frac{k}{2}} W_{\frac{k}{2}, s + \frac{k-1}{2}}(4 \pi ny) e^{2 \pi i n x}\big) =
\overline{s}(1-k - \overline{s}) y^{-(\frac{2-k}{2})} W_{-(\frac{2-k}{2}), -\bar{s} + \frac{1-k}{2}}(4 \pi |n|y) e^{-2\pi i n x}.
$$

(3) There holds
$$ 
\xi_k( y^s) = \overline{s} y^{-1+k +\overline{s}}
\quad
\mbox{and} \quad
\xi_k( y^{1-(s+k)}) = (1 -k- \overline{s}) y^{-\overline{s}}.
$$
\end{lemma}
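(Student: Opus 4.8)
The plan is to compute $\xi_k$ acting on each of the three types of Fourier-coefficient building blocks directly from the definition $\xi_k(f) = 2i y^k \overline{\partial_{\bar z} f}$, using the Wirtinger derivative $\partial_{\bar z} = \tfrac12(\partial_x + i\partial_y)$. In each case the $x$-dependence enters only through $e^{2\pi i n x}$ (or is trivial in part (3)), so $\partial_x$ contributes a factor $2\pi i n$ and $\partial_y$ acts on the $y$-part; after conjugation the exponential $e^{2\pi i n x}$ becomes $e^{-2\pi i n x}$, explaining the sign flip $n \mapsto -n$, and the parameter $s$ inside the Whittaker function becomes $\bar s$ while $\kappa = \pm k/2$ and the order $\mu = s + \tfrac{k-1}{2}$ transform as indicated.

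For part (3), which I would do first as a warm-up, the computation is elementary: $\partial_{\bar z}(y^s) = \tfrac12(\partial_x + i\partial_y)(y^s) = \tfrac{i}{2} s\, y^{s-1}$, so $\xi_k(y^s) = 2i y^k \overline{\tfrac{i}{2} s y^{s-1}} = 2i y^k \cdot (-\tfrac{i}{2}) \bar s\, y^{s-1} = \bar s\, y^{k-1+\bar s}$ (using that $y$ is real), and similarly for $y^{1-(s+k)}$, giving the factor $1-k-\bar s$. The only subtlety is bookkeeping of the conjugation of the constant $i$ versus the (real) powers of $y$ and the (genuinely complex) parameter $s$.

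For parts (1) and (2) I would write the $y$-part as $g(y) = y^{-k/2} W_{\kappa, \mu}(4\pi |n| y)$ with $\kappa = \mp k/2$ (for $n$ negative resp.\ positive) and $\mu = s + \tfrac{k-1}{2}$, compute $\partial_{\bar z}\big(g(y) e^{2\pi i n x}\big) = \big(\tfrac{2\pi i n}{2} g(y) + \tfrac{i}{2} g'(y)\big) e^{2\pi i n x}$, conjugate, and multiply by $2i y^k$. The resulting combination $-(2\pi n) g(y) - g'(y)$ (times $y^k$, times the flipped exponential, with the $i$'s canceling) must be recognized as a constant multiple of $y^{-(2-k)/2} W_{\kappa', \mu'}(4\pi|n|y)$ with the new indices. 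The cleanest route is to invoke the standard contiguous/derivative relations for Whittaker functions (e.g.\ $W_{\kappa,\mu}'(z) = (\tfrac12 - \tfrac{\kappa}{z})W_{\kappa,\mu}(z) - \tfrac{1}{z}W_{\kappa+1,\mu}(z)$, together with the relation lowering $\mu$ or raising $\kappa$), which collapse the expression $\big(\tfrac{z}{2} + z\partial_z\big)W_{\kappa,\mu}(z)$ into a single Whittaker function of shifted first index. The eigenvalue-type prefactor $\bar s(1-k-\bar s)$ appearing in part (2), versus the bare $-1$ in part (1), comes out of this recursion because raising $\kappa$ from $-k/2$ to $(2-k)/2$ is "free" (it is the direction the derivative relation naturally goes) whereas in part (2) one needs to lower $\kappa$ from $k/2$ to $-(2-k)/2 = (k-2)/2$, which by $W_{\kappa,\mu} = W_{\kappa,-\mu}$ and the second contiguous relation picks up the factor $(\mu - \kappa + \tfrac12)(\mu + \kappa - \tfrac12) = \bar s(1-k-\bar s)$ after specializing $\kappa = k/2$, $\mu = \bar s + \tfrac{k-1}{2}$ (the conjugation having already been applied).

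The main obstacle I anticipate is purely this last identification step: matching the differential combination $\big(\tfrac{z}{2} + z\tfrac{d}{dz}\big)W_{\kappa,\mu}(z)$ against the target Whittaker function and getting the constant exactly right, including the signs, the conjugation bars on $s$, and the shift $\mu = s + \tfrac{k-1}{2} \mapsto -\bar s + \tfrac{1-k}{2}$ (note $-(\bar s + \tfrac{k-1}{2}) = -\bar s + \tfrac{1-k}{2}$, so this is just $\mu \mapsto -\bar\mu$, consistent with $W_{\kappa,\mu} = W_{\kappa,-\mu}$). Since $\xi_k$ is known (Lemma~\ref{lem:xi_commute}) to send weight $k$ forms to weight $2-k$ forms, the power $y^{-k/2} \mapsto y^{-(2-k)/2}$ is forced, which is a useful consistency check; and one can cross-check the constants against the factorization $\Delta_k = \xi_{2-k}\xi_k$ of Lemma~\ref{lem:DeltaFactor} together with the eigenvalue $s(s+k-1)$ of $\Delta_k$ on these functions, since applying $\xi_{2-k}$ to the output of part (2) should return $\bar s(1-k-\bar s) \cdot (\overline{\bar s}(\overline{\bar s}+k-1)) \cdots$ — in fact the product of the two constants from parts (1) and (2) (with appropriately conjugated arguments) must equal the Laplacian eigenvalue, which pins everything down.
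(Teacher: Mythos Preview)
Your proposal is correct and follows essentially the approach the paper indicates: the paper's own proof is merely ``These results follow by a calculation, given in detail in \cite{LR15S},'' so your explicit computation via the Wirtinger derivative plus the Whittaker contiguous relations (raising $\kappa$ for free in part~(1), lowering $\kappa$ at the cost of the factor $(\kappa-\tfrac12-\mu)(\kappa-\tfrac12+\mu)$ in part~(2)) is exactly the kind of calculation being deferred. One small point to watch: with $\kappa=k/2$ and $\mu=\bar s+\tfrac{k-1}{2}$ your displayed product $(\mu-\kappa+\tfrac12)(\mu+\kappa-\tfrac12)$ actually evaluates to $\bar s(\bar s+k-1)=-\bar s(1-k-\bar s)$, so the missing sign is absorbed by the sign in the raising/lowering relation itself --- precisely the bookkeeping you flagged as the main obstacle.
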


\begin{proof}
These results follow by a calculation, given in detail in \cite{LR15S}.
\end{proof}

% %%%%%%%%%%%%%%%%%%
%Note that the $\xi_k$-operator has an anti-holomorphic action.
%%%%%%%%%%%%%%%%%

%%%%%%%%%%%%%%%%%
%%                                                %%
%%      Subsection 5.3(formerly 3.5, 5.3)  Moderate Growth          %%
%%                                                %%
%%%%%%%%%%%%%%%%%
\subsection{$\xi_k$ preserves moderate growth}\label{sec54}

%****************************
%
% Lemma 5.7 [formerly 4.7 or 4.8]
%
%****************************
\begin{lemma}\label{lem42}\label{lem:subspaceXi}
The action of the $\xi$ operator on shifted polyharmonic vector spaces preserves
the moderate growth property for all $\lambda$. That is, 
\[
\xi_k( V_k^m(0)) \subseteq  V_{2-k}^m(0) .
\]
\end{lemma}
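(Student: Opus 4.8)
The plan is to reduce the statement to a coefficient-by-coefficient check using the Fourier expansion machinery already developed. First I would invoke Lemma \ref{lem:xi_commute}, which shows that $\xi_k$ carries any weight $k$ modular form for $\PSL(2,\ZZ)$ to a weight $2-k$ modular form for the same group, with no growth conditions needed; this handles the modular invariance part of the claim automatically. Next, since $\Delta_k = \xi_{2-k}\xi_k$ by Lemma \ref{lem:DeltaFactor}, we get $\Delta_{2-k}^m \,\xi_k f = \xi_k\, \Delta_k^m f = 0$ whenever $f \in V_k^m(0)$ (here I would note that $\xi_{2-k}\xi_k$ applied $m$ times equals $\Delta_{2-k}^{m-1}\xi_k\Delta_k = \cdots$ by repeatedly commuting, using $\Delta_k = \xi_{2-k}\xi_k$ and $\Delta_{2-k}=\xi_k\xi_{2-k}$, so that $\xi_k \Delta_k^m = \Delta_{2-k}^m \xi_k$ as operators). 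So $\xi_k f$ is automatically $m$-polyharmonic of weight $2-k$; the only thing left to verify is that it has moderate growth at the cusp.

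For the growth, I would take $f \in V_k^m(0)$ and use the explicit Fourier expansion of Theorem \ref{lem:abstractFourier} (with $\lambda = 0$), writing $f(z)$ as a finite sum over $j < m$ of the constant-term pieces $u_{k,0}^{[j],\pm}(y)$ plus a sum over $n \ne 0$ of terms $u_{\epsilon,k,n}^{[j],-}(y)e^{2\pi i n x}$, these last built from derivatives in $s$ of Whittaker $W$-functions. Since $\xi_k$ acts termwise on an absolutely-and-uniformly-convergent Fourier expansion (differentiation under the sum is justified by the local uniform convergence noted in Theorem \ref{lem:abstractFourier} together with the real-analyticity), I would apply Lemma \ref{le54} to each term. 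Parts (1) and (2) of that lemma show each $W$-Whittaker Fourier term of $f$ is sent to another $W$-Whittaker Fourier term (for the opposite sign of $n$, with parameter $s \mapsto -\bar s$), and part (3) shows the constant-term monomials $y^s$, $y^{1-k-s}$ go to $y^{-1+k+\bar s}$, $y^{-\bar s}$ — in all cases staying inside the list of allowed moderate-growth Fourier coefficients from Theorem \ref{th41} for weight $2-k$. For the $s$-derivative terms present when $m > 1$, I would differentiate the identities of Lemma \ref{le54} in $s$ (legitimate since both sides are holomorphic/antiholomorphic in $s$), which produces finite linear combinations of $\partial_s^i W$-terms and $(\log y)^i$-monomials, again of exactly the form permitted by Theorem \ref{th41}; in particular the $W$-Whittaker functions and their $s$-derivatives all decay (no worse than polynomially times exponentially small) as $y \to \infty$, and the monomial pieces are polynomial in $y$ up to a bounded power of $\log y$. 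Hence $\xi_k f$ has moderate growth, so $\xi_k f \in V_{2-k}^m(0)$.

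\textbf{Main obstacle.} The routine part is Lemma \ref{lem:xi_commute} and the operator identity $\xi_k\Delta_k^m = \Delta_{2-k}^m\xi_k$; the genuinely delicate point is justifying the termwise application of $\xi_k$ to the Fourier series and controlling the growth of the image uniformly. Concretely, one must check that after applying $\xi_k$ the resulting series still converges to $\xi_k f$ (not merely formally) and that the bound $O(y^c)$ survives — this is where the explicit asymptotics of $W_{\kappa,\mu}(y)$ and its $s$-derivatives as $y\to\infty$ are needed, together with the fact that only the rapidly-decaying $u^{[j],-}$ solutions (and not the $u^{[j],+}$ ones) occur in the expansion of an $f$ of moderate growth. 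Once that asymptotic bookkeeping is in place the inclusion $\xi_k(V_k^m(0)) \subseteq V_{2-k}^m(0)$ follows. I would remark that essentially the same argument, carried out with general $\lambda$ via Theorem \ref{th41}, gives the statement ``for all $\lambda$'' asserted in the lemma.
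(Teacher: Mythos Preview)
Your proposal is correct and follows essentially the same approach as the paper: establish the polyharmonicity of $\xi_k f$ via the operator identity $\xi_k\Delta_k^m = \Delta_{2-k}^m\xi_k$ coming from the factorization $\Delta_k=\xi_{2-k}\xi_k$, then verify moderate growth by expanding $f$ in the Fourier series of Theorem~\ref{lem:abstractFourier}, applying Lemma~\ref{le54} termwise, and differentiating those identities in $s$ to handle the higher-depth terms. If anything you are slightly more explicit than the paper about the modular invariance step (via Lemma~\ref{lem:xi_commute}) and the constant-term case (via part (3) of Lemma~\ref{le54}).
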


\begin{proof}
Now $f(z) \in V_k^m(0)$ has $(\Delta_k)^m f(z) =0.$
Applying Lemma \ref{lem:DeltaFactor}
 we have 
$$
0 = \xi_k(\Delta_{2-k})^m f(z)  = \xi_k (\xi_{2-k} \xi_k)^m f(z) = (\xi_k \xi_{2-k})^m (\xi_{k} f(z))= (\Delta_{2-k})^m (\xi_{k} f(z)).
$$
It remains to show that $\xi_k f(z)$ has moderate growth. 
We expand the polyharmonic function $f(z)$ in Fourier series,
using Theorem \ref{lem:abstractFourier},  noting that 
the Fourier series coefficients for $n \ne 0$ only involves $s$-derivatives of $W$-Whittaker functions.
We apply $\xi_k$ term by term to the resulting Fourier series.
By Lemma \ref{le54},  we have  for $n \le -1$ that 
$$
\xi_k\big( y^{-\frac{k}{2}} W_{-\frac{k}{2}, s + \frac{k-1}{2}}(4 \pi |n|y) e^{2 \pi i n x}\big) =
- y^{-(\frac{2-k}{2})} W_{\frac{2-k}{2}, -\bar{s} + \frac{1-k}{2}}(4 \pi |n|y) e^{-2\pi i nx}.
$$
while for $n \ge 1$, 
$$
\xi_k\big( y^{-\frac{k}{2}} W_{\frac{k}{2}, s + \frac{k-1}{2}}(4 \pi ny) e^{2 \pi i n x}\big) =
\overline{s}(1-k - \overline{s}) y^{-(\frac{2-k}{2})} W_{-(\frac{2-k}{2}), -\bar{s} + \frac{1-k}{2}}(4 \pi |n|y) e^{-2\pi i n x}.
$$
Differentiating repeatedly with respect to $s$, and noting that
$$
\frac{\partial}{\partial s}  \xi_k = \xi_k \frac{\partial}{\partial \ol{s}} \quad\mbox{and} \quad\frac{\partial}{\partial \ol{s}} \xi_k = \xi_k \frac{\partial}{\partial s},
$$
 we obtain, for  $n \le -1$, 
$$
\xi_k\Big( \frac{\partial^j}{\partial s^{j}}\big( y^{-\frac{k}{2}} W_{-\frac{k}{2}, s + \frac{k-1}{2}}(4 \pi |n|y) e^{2 \pi i n x}\big) \Big)=
- y^{-(\frac{2-k}{2})} \frac{\partial^j}{\partial \bar{s}^{j}}
W_{\frac{2-k}{2}, -\bar{s} + \frac{1-k}{2}}(4 \pi |n|y) e^{-2\pi i nx}.
$$
For the case $n \ge 1$ the repeated $s$-derivatives give many more terms, but they all involve
polynomials in $\bar{s}$ times $\frac{\partial^k}{\partial \bar{s}^k} W_{\frac{2-k}{2}, -\bar{s} + \frac{1-k}{2}}(4 \pi |n|y)$
with $0 \le k \le j$.   All the resulting $s$-derivatives of the Whittaker W-functions have rapid decay
at the cusp (uniformly for a fixed value $s=s_0$, in our case $s=0$).
This may be shown by differentiating an integral representation of the Whittaker $W$-function with
respect to the $s$-parameter, see \cite{LR15S} for details.
The resulting Fourier series expansion has moderate growth at the cusp, certifying that $\xi_k f(z) \in V_{2-k}^{m}(0)$.
\end{proof}

%****************************
%
% Remark Section 5.4
%
%****************************
\begin{remark}

(1) This argument is  the special case $\lambda=0$ of a result proved in \cite{LR15S}.

(2) There  exist
weight $k$ real-analytic modular forms $f(z)$ {\em not} having moderate growth at the cusp with the property that
$\xi_k f(z)$ has moderate growth at the cusp. The simplest examples are members of $M_k^{!} \backslash M_k$
which have linear exponential growth at the cusp but
 are annihilated by $\xi_k$ since they are holomorphic functions.
\end{remark}

%%%%%%%%%%%%%%%%%
%%                                                %%
%%      Ssection  6Nonliftability           %%
%%                                                %%
%%%%%%%%%%%%%%%%%
\section{Polyharmonic Non-Liftability of Holomorphic Cusp Forms} \label{sec:newsec6}

%%%%%%%%%%%%%%%%%
%%                                                %%
%%      Subsection  6.1 (formerly 4.3)           %%
%%                                                %%
%%%%%%%%%%%%%%%%%

\subsection{Weak Maass Forms}\label{sec61}

We consider the liftability problem in the context of larger spaces of weak
Maass forms, in which lifts do exist.  Such spaces were originally introduced in a more general  context
in Bruinier and Funke \cite{BF04}, see also  Bruggeman \cite{Brug14}.

%%%%%%%%
% Definition 6.1 
%%%%%%%%
\begin{definition}\label{de61}
For $k \in 2\ZZ$ 
 the space $H_k(0)$ of  weight $k$ {\em Harmonic weak Maass forms}
are those $f: \HH \to \CC$, such that
\begin{enumerate}
\item[(1)] For all $\gamma \in SL(2, \ZZ)$,
$f(\gamma z) = (cz+d)^k f(z).$
\item[(2)] $\Delta_k f(z) = 0$
\item[(3)]{\rm ( Linear exponential growth at the cusp)}
There is a positive constant $A= A(f) < \infty$ such that
$|f(z)| \le e^{Ay}$ for all $y \ge y_0$.
\end{enumerate}
\end{definition}

The vector space $H_k= H_k(0)$ is  infinite dimensional.
By definition the  holomorphic functions in the space 
$H_k$ comprise the space $M_k^{!}$ of {\em weakly holomorphic
 weight $k$ modular forms}, cf.  Definition \ref{de34}.

%%%%%%
% Proposition 6.2 
%%%%%%
\begin{proposition}\label{pr62} {\rm (Bruinier and Funke \cite[Prop. 3.2]{BF04})}
For $\lambda=0$ and all $k \in 2\ZZ$ the map $f(z) \mapsto \xi_k f(z)$ defines
a conjugate-linear mapping
$$ \xi_k: H_k \to M_{2-k}^{!}. $$
Its kernel is $M_k^{!}$.
%%%%%%%%%%%%%%
% the space of weakly holomorphic weight $k$ modular forms.
%%%%%%%%%%%%%%%
\end{proposition}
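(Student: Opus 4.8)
The plan is to verify the three assertions of Proposition \ref{pr62} separately: (a) that $\xi_k$ maps $H_k$ into $M_{2-k}^{!}$, (b) that the map is conjugate-linear, and (c) that its kernel is exactly $M_k^{!}$.

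\textbf{Conjugate-linearity.} This is immediate from the definition $\xi_k(f) = 2iy^k \overline{\partial_{\bar z} f}$: the outer complex conjugation makes $\xi_k(\alpha f) = \bar\alpha\, \xi_k(f)$ for $\alpha \in \CC$, while additivity is clear since $\partial_{\bar z}$ is additive and conjugation is additive. So this step is a one-line observation.

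\textbf{Image lands in $M_{2-k}^{!}$.} First I would use Lemma \ref{lem:xi_commute} to see that if $f$ satisfies the weight $k$ modular transformation law then $\xi_k f$ satisfies the weight $2-k$ one. Next, $\xi_k f$ must be shown holomorphic: by Lemma \ref{lem:DeltaFactor} we have $\Delta_k = \xi_{2-k}\xi_k$, and since $\xi_{2-k}(g) = 2iy^{2-k}\overline{\partial_{\bar z} g}$ vanishes precisely when $\partial_{\bar z} g = 0$, the hypothesis $\Delta_k f = 0$ forces $\partial_{\bar z}(\xi_k f) = 0$, i.e. $\xi_k f$ is holomorphic in $z$. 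Finally I must check the growth condition: $f$ has at most linear exponential growth at the cusp, and I need $\xi_k f$ to have at most linear exponential growth as well (so that it lies in $M_{2-k}^{!}$ rather than some larger space). For this I would invoke the Fourier expansion \eqref{eq413} for $f \in H_k$ from the Bruinier--Funke discussion, apply $\xi_k$ term-by-term using the computations in Lemma \ref{le54} (with $s_0$ corresponding to $\lambda = 0$) — noting that $\xi_k$ sends the incomplete-Gamma ``non-holomorphic'' terms $b_n\Gamma(1-k,4\pi|n|y)e^{2\pi i nz}$ to exponentials $e^{-2\pi i nz}$ and annihilates the purely holomorphic part up to the $b_0 y^{1-k}$ term, which goes to a constant — and observe that the resulting $q$-expansion has only finitely many negative-index terms, hence linear exponential growth. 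This growth bookkeeping is the step I expect to be the main obstacle, since one must be careful that no term of $\xi_k f$ grows faster than allowed and that the holomorphic $q$-expansion one obtains is genuinely meromorphic at the cusp.

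\textbf{Kernel.} If $\xi_k f = 0$ then, unwinding the definition, $\overline{\partial_{\bar z} f} = 0$ on $\HH$ (since $y^k \neq 0$), so $\partial_{\bar z} f = 0$ and $f$ is holomorphic; combined with the weight $k$ modularity and the linear exponential growth from condition (3) of Definition \ref{de61}, this says exactly $f \in M_k^{!}$ by Definition \ref{de34}. Conversely, any $f \in M_k^{!}$ is holomorphic, hence $\partial_{\bar z} f = 0$, hence $\xi_k f = 0$; and $M_k^{!} \subseteq H_k$ since holomorphic functions satisfy $\Delta_k f = \xi_{2-k}\xi_k f = 0$ automatically. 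This establishes $\ker \xi_k = M_k^{!}$ and completes the proof.
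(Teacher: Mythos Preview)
The paper does not actually prove this proposition: it is stated as a citation to Bruinier--Funke \cite[Prop.~3.2]{BF04}, with no argument given. So there is no ``paper's own proof'' to compare against; your write-up supplies what the paper omits.

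Your argument is correct in outline and in its details for parts (b) (modularity and holomorphy) and (c) (kernel). One remark on the growth step: you invoke the Fourier expansion \eqref{eq413} for $f \in H_k$, but in the paper that expansion is itself attributed to Bruinier--Funke (and the displayed reference to $M_k^{!}$ there appears to be a typo for $H_k$). So you are using one Bruinier--Funke result to prove another; this is fine logically, since in the original source the Fourier expansion is established first and Proposition~3.2 is deduced from it, but you should be aware that the expansion for $H_k$ (linear exponential growth, not just moderate growth) is a genuine input and not a triviality---it requires knowing that the $M$-Whittaker terms are excluded by the exponential bound, extending the argument of Lemma~\ref{lem:FourierExp1Harmonic}. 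Once that expansion is in hand, your term-by-term computation of $\xi_k f$ via Lemma~\ref{le54} is exactly right and shows the image is a $q$-series with finite principal part, hence lies in $M_{2-k}^{!}$.
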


This mapping was shown to be surjective by Bruggeman \cite[Theorem 1.1]{Brug14}.
The members of $H_k$  have a Fourier expansion of shape
$$
f(z) =\sum_{n \ll \infty}  c_f^{-}(n) \Gamma(1-k, 4 \pi |n|y) q^n 
+ \sum_{n \gg - \infty} c_f^{+}(n) q^n,
$$
with $q= e^{2 \pi i z}$ and  $z=x+iy$.
Here $n \gg -\infty$ means
that $n$ is bounded below, $n \ll \infty$ means $n$ is bounded above. 
 We call
$$
f^{+}(z) := \sum_{n \gg - \infty} c_f^{+}(n) q^n
$$
the {\em holomorphic part} of $f(z)$, and
$$
f^{-}(z) = \sum_{n \ll \infty}  c_f^{-}(n) \Gamma(k-1, 4 \pi |n|y) q^n
$$
the {\em non-holomorphic part} of $f(z)$. We also call the finite sum
$$
P(f)(z) := \sum_{n \le 0} c_f^{+}(n) q^n
$$
the {\em principal part} of $f(z)$ (or of $f^{+}(z)$.)

The inclusion $V_k^{1}(0) \subset H_k$ immediately follows
by considering the Fourier series of members of $V_k^1(0)$
given in Lemma \ref{lem:FourierExp1Harmonic}.

%%%%%%%%
% Definition 6.3 [NEW]
%%%%%%%%
\begin{definition}\label{de63}
For $k \in 2\ZZ$ the space $H_k^{+}$ of  weight $k$
 {\em harmonic weak Maass forms}\footnote{The terminology is used in
 Bruinier, Ono and Rhoades \cite{BOR08}, who  call this space $H_k$,
 omitting the $+$,
 see \cite[Remark 6]{BOR08}.} 
of eigenvalue $\lambda=0$ are those $f \in H_k^{+}$, whose Fourier expansion
has non-holomorphic part of form
$$
f^{-}(z) =\sum_{n <0}  c_f^{-}(n) \Gamma(k-1, 4 \pi |n|y) q^n,
$$
That is, $f^{-}{z}$ has rapid decay as $y \to \infty$.
\end{definition}

The subspace $H_k^{+}$ has an alternate characterization as the set of all
$f(z) \in H_k$ such that $\xi_k f(z) \in S_k$ is a holomorphic cusp form.

%%%%%%
% Proposition 6.4 [NEW]
%%%%%%
\begin{proposition}\label{pr64} {\rm (Bruinier and Funke \cite[Theorem 1.1]{BF04})}
For $k \in 2\ZZ$ the space $H_k^{+}$ contains $M_k^{!}$ and
there is an exact sequence  (regarded as $\RR$-vector spaces)
$$
0 \rightarrow M_k^{!} \rightarrow H_k^{+} \rightarrow S_{2-k} \rightarrow 0,
$$
in which the third map is $\xi_k$ (which is conjugate-linear).
Moreover, there is a well-defined bilinear form $\{\cdot, \cdot\}$ defined for
$f(z) \in H_k^{+}, g(z) \in S_{2-k}$ by
$$
\{ g, f\} := ( g, \xi_k(f) )_{2-k},
$$
in which $(\cdot, \cdot)_{2-k}$ denotes the Petersson inner  product on modular
forms of weight $2-k$ (at least one a cusp form). This bilinear form gives a
non-degenerate pairing of $S_{2-k}$ with $H_k^{+}\slash M_k^{!}.$
\end{proposition}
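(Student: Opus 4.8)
The plan is to reproduce the structure of Bruinier and Funke's argument, with the conjugate-linear operator $\xi_k$ as the central object, and to dispose first of the formal parts. The inclusion $M_k^{!} \subseteq H_k^{+}$ is immediate from the definitions: a weakly holomorphic form $f$ is holomorphic, so $\xi_k f = 0$ and a fortiori $\Delta_k f = \xi_{2-k}\xi_k f = 0$ by Lemma \ref{lem:DeltaFactor}, while $f$ has no non-holomorphic part and so the rapid-decay condition in Definition \ref{de63} holds vacuously. Exactness at $M_k^{!}$ is then just injectivity of an inclusion map.

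Next I would verify that $\xi_k$ carries $H_k^{+}$ into $S_{2-k}$, and not merely into $M_{2-k}^{!}$. By Lemma \ref{lem:xi_commute} the image $\xi_k f$ is a weight $2-k$ modular form, and it is holomorphic because $\xi_{2-k}(\xi_k f) = \Delta_k f = 0$ by Lemma \ref{lem:DeltaFactor}, and $\xi_{2-k}$ annihilates a $C^{\infty}$ function precisely when that function is holomorphic. To upgrade ``holomorphic'' to ``cusp form'' I would apply Lemma \ref{le54} termwise to the Fourier expansion of $f \in H_k^{+}$: the constant term of $f$ is a genuine constant, since the $y^{1-k}$ contribution is absent by the definition of $H_k^{+}$ (cf. Theorem \ref{th40}(2a) with $s_0 = 0$), and $\xi_k$ kills constants by Lemma \ref{le54}(3), while the non-holomorphic exponential terms with $n < 0$ are sent to positive-index holomorphic terms. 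Hence $\xi_k f$ has vanishing constant term and lies in $S_{2-k}$. For exactness at $H_k^{+}$ it then suffices to invoke Proposition \ref{pr62}: the kernel of $\xi_k$ on the larger space $H_k$ is $M_k^{!}$, so the kernel on $H_k^{+}$ is $M_k^{!} \cap H_k^{+} = M_k^{!}$.

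The substantive step is surjectivity of $\xi_k : H_k^{+} \to S_{2-k}$, and this is the main obstacle. Here I would construct, for each integer $m \ge 1$, a weight $k$ harmonic weak Maass form $F_{k,m}(z) \in H_k^{+}$ whose holomorphic part has principal part $q^{-m}$ and whose image $\xi_k F_{k,m}$ is an explicit nonzero multiple of the weight $2-k$ cuspidal Poincaré series $P_{2-k,m}$; since the $P_{2-k,m}$ span $S_{2-k}$, surjectivity follows. Producing these Maass--Poincaré series --- establishing convergence (with a Hecke-type regularization when $2-k$ is small), computing the Fourier expansion, and identifying the $\xi_k$-image --- is the only place where genuine analysis, as opposed to formal manipulation, is required. (Alternatively, one could deduce surjectivity from Bruggeman's surjectivity of $\xi_k : H_k \to M_{2-k}^{!}$ together with an argument controlling principal parts, but the Poincaré-series route is the most self-contained.)

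Finally I would treat the bilinear form. The integral $\{g,f\} = (g, \xi_k f)_{2-k}$ converges absolutely because $g \in S_{2-k}$ is cuspidal, so the Petersson integrand decays rapidly; it is additive in each slot, linear in $g$, and, since $\xi_k$ is conjugate-linear, $\RR$-bilinear in the stated sense, and it descends to $H_k^{+}/M_k^{!}$ in the second variable because $\xi_k$ annihilates $M_k^{!}$. For non-degeneracy: if $\{g,f\} = 0$ for every $g \in S_{2-k}$, then $\xi_k f$ is Petersson-orthogonal to $S_{2-k}$ while lying in $S_{2-k}$, hence $\xi_k f = 0$ and $f \in M_k^{!}$ represents zero in the quotient; conversely, if $\{g,f\} = 0$ for every $f \in H_k^{+}$, then by the surjectivity just established $g$ is orthogonal to all of $S_{2-k} = \xi_k(H_k^{+})$, so $g = 0$. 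This yields the asserted non-degenerate pairing of $S_{2-k}$ with $H_k^{+}/M_k^{!}$.
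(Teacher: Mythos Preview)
The paper does not supply a proof of this proposition: it is stated as a citation of Bruinier--Funke \cite[Theorem~1.1]{BF04}, followed only by the remark that the sequence is exact over $\RR$ but not over $\CC$ because $\xi_k$ is conjugate-linear. There is therefore no argument in the paper against which to compare your proposal.

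That said, your sketch is sound and correctly reproduces the structure of the Bruinier--Funke argument. The formal parts---the inclusion $M_k^{!}\subseteq H_k^{+}$, exactness at $M_k^{!}$ and at $H_k^{+}$, well-definedness of the pairing and its non-degeneracy---are handled cleanly, and you are right to isolate surjectivity of $\xi_k : H_k^{+}\to S_{2-k}$ as the only genuinely analytic step and to point to the Maass--Poincar\'e series construction (this is indeed what \cite{BF04} does). One minor remark: your argument that $\xi_k f$ lands in $S_{2-k}$ is essentially equivalent to the alternate characterization of $H_k^{+}$ the paper records immediately after Definition~\ref{de63}, so that step could be shortened. Your proposal would serve as a self-contained proof of a result the present paper only quotes.
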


The sequence above is  an exact sequence of $\RR$-vector spaces but not an exact sequence of $\CC$-vector spaces
because the map $\xi_k$ is conjugate-linear, compare \cite[Corollary 3.8]{BF04}.
%Not sure how to state this . Is it Hermitian?}

%%%%%%%%%%%%%%%%%
%%                                                %%
%%      Subsection  6.2 (formerly 4.3)           %%
%%                                                %%
%%%%%%%%%%%%%%%%%
\subsection{Regularized Petersson Inner Product}\label{sec62}

We recall that for weight $k$ holomorphic modular forms the {\em Petersson inner product} is defined on
$M_k \times S_k$ by
$$
(f, g)_k := \int_{\sF} f(z) \overline{g(z)}y^k \frac{dx dy}{y^2},
$$
in which $\sF=\{ z : \, |z| \ge 1, 0 \le x \le 1\}$ is the standard fundamental domain
for $SL(2, \ZZ)$.

 A {\em (regularized) inner product} generalizing the Petersson inner product
 was introduced in Borcherds \cite{Bor99}. Let $\sF_T$ denote
 the standard fundamental domain for $SL(2, \ZZ)$ cut off at height $T$,
 $$
 \sF_T = \{ z: \, , |z| \ge 1 \, \mbox{with} \,\, 0 \le x \le 1, \, y\le T\}.
 $$
 Let $g\in M_k$ 
and let $h\in M_k^!$ be a weight $k$ weakly holomorphic modular form of weight $k$ on $\SL_2(\Z)$.
 Then for modular forms of weight $k$, set 
$$
(g, h)^{reg} := \lim_{T\to \infty} \int_{\cF_T} g(z) \ol{h(z)} y^k \frac{dxdy}{y^2}.
$$
\begin{remark}
If $h$ is a cusp form, then $(g, h)^{reg}$ is the usual Petersson inner product.
\end{remark}

%****************************
%
% Lemma 6.5 [formerly6.1)[REViSED]
%
%****************************

\begin{lemma}\label{lem:regularizedInnerProduct}
Let $f(z)\in H_{2-k}^{+}$ have
 $\xi_{2-k} f(z)  = g(z)$, where $g \in S_{k}$ is  a nonzero weight $k$ holomorphic cusp form.
Then 
$$
f(z) = \sum_{n\gg -\infty} c_{f}^+(n) q^n + \sum_{n <0} c_{f}^-(n) \Gamma(k-1, 4\pi \abs{n} y) q^n,
$$
with $q=e^{2 \pi i z}$,
and 
\begin{equation}\label{eq-claim1}
g(z) = \xi_{2-k} f(z) = -  \sum_{n=1}^\infty \ol{c_f^{-}(-n)}(4 \pi n)^{k-1} q^n.
\end{equation}
Moreover, if $h(z)  \in M_{k}$ with
$h(z) = \sum_{n=0}^{\infty} c_h(n)^{+} q^n$
then the Petersson inner product
\begin{equation}\label{IPF}
(h, \xi_{2-k} f)_{k} = \sum_{n=0}^{\infty} c_h^{+}(n) c_f^{+}(-n).
\end{equation}

In particular, for $h = g$, then 
\begin{equation}\label{eq-claim2}
(g, g)_{k} = (g, \xi_{2-k} f)_{k}  
%%%%%%%%%%%%%%%%%%%%%
% formerly 
%-(4\pi)^{1-k} \sum_{n< 0} c_{f}^{+} (n) c_f^{-}(n) 
%%%%%%%%%%%%%%%%%%%%%%%
=  -\sum_{n=1}^{\infty}  (4 \pi n)^{k-1} \overline{c_{f}^{-}(-n)} \,c_f^{+}(-n).
\end{equation}
\end{lemma}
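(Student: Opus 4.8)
The claim has three parts: the formula \eqref{eq-claim1} for $\xi_{2-k} f$, the inner-product formula \eqref{IPF}, and its specialization \eqref{eq-claim2}. The plan is to derive them in that order. For \eqref{eq-claim1}, I would apply $\xi_{2-k}$ term-by-term to the Fourier expansion of $f(z)$. The holomorphic part $\sum_{n \gg -\infty} c_f^+(n) q^n$ is annihilated by $\xi_{2-k}$ since it is a holomorphic function of $z$ (Lemma \ref{lem:xi_commute} and the definition of $\xi$ via $\frac{\partial}{\partial \bar z}$). For the non-holomorphic part, I would use Lemma \ref{le54}(1) with weight $2-k$ (so that $-\frac{(2-k)}{2} = \frac{k-2}{2}$ and the output has weight $k$), together with the identity \eqref{claim1} relating $\Gamma(1-k, 4\pi|n|y)$ to the Whittaker function; alternatively, a direct computation $\xi_{2-k}\big(\Gamma(k-1,4\pi n y) q^{-n}\big)$ using $\frac{\partial}{\partial \bar z} = \frac{1}{2}(\frac{\partial}{\partial x} + i\frac{\partial}{\partial y})$, $\frac{\partial}{\partial \bar z}(\Gamma(k-1,4\pi n y)) = \frac{i}{2}\cdot(-4\pi n)(4\pi n y)^{k-2} e^{-4\pi n y}$, and the complex conjugation in the definition of $\xi$. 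Tracking the constants (including the factor $2iy^{2-k}$ and the $(4\pi n)^{k-1}$ that emerges) and the sign gives \eqref{eq-claim1}.

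For \eqref{IPF}, the key tool is the regularized Petersson inner product and a standard Stokes/unfolding argument. Since $g = \xi_{2-k} f \in S_k$ is a cusp form, $(h, \xi_{2-k} f)_k$ is the genuine Petersson integral over $\sF$, which equals the regularized one. I would then write $y^k \overline{\xi_{2-k} f(z)}\, \frac{dx\,dy}{y^2}$ in terms of $f$: using $\overline{\xi_{2-k} f} = \overline{2i y^{2-k} \overline{\partial_{\bar z} f}} = -2i y^{2-k} \partial_{\bar z} f$ (antiholomorphicity of the conjugate), one gets $h(z)\,\overline{\xi_{2-k} f(z)}\, y^{k-2}\, dx\,dy = -2i\, h(z)\, \partial_{\bar z} f(z)\, dx\, dy = -\, h(z)\, \overline{\partial z \wedge dz}\,(\cdots)$ — more precisely one recognizes $h(z)\,\partial_{\bar z} f(z)\, d\bar z \wedge dz$ as (up to sign) $d\big( h(z) f(z)\, dz\big)$ because $h$ is holomorphic. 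Applying Stokes' theorem on $\sF_T$ converts the integral to a boundary integral; the contributions from the $SL_2(\Z)$-identified vertical sides cancel by modularity of $h(z) f(z)\, dz$ (this form has weight $k + (2-k) = 2$, hence is $SL_2(\Z)$-invariant as a $1$-form), leaving only the horizontal segment at height $T$. On that segment, $\int_0^1 h(z) f(z)\, dx$ picks out, as $T \to \infty$, the constant Fourier coefficient of the product $h f$; expanding $h(z) = \sum_{n\ge 0} c_h^+(n) q^n$ and the holomorphic part of $f$ as $\sum_{m \gg -\infty} c_f^+(m) q^m$, and noting that the non-holomorphic part of $f$ times $h$ contributes nothing in the limit (its terms decay like $\Gamma(k-1, 4\pi|n|y)$), the constant term is $\sum_{n \ge 0} c_h^+(n) c_f^+(-n)$. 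Reconciling the overall sign with the $-2i$ and the orientation gives \eqref{IPF}. Here one must be careful that the sum $\sum_{n\ge 0} c_h^+(n) c_f^+(-n)$ is finite because $f^+$ has only finitely many terms with $n \le 0$ (the principal part), so convergence and the interchange of limit and sum are unproblematic.

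Finally, \eqref{eq-claim2} follows by setting $h = g$ in \eqref{IPF}: $(g,g)_k = \sum_{n \ge 0} c_g^+(n) c_f^+(-n)$, then substituting the coefficients of $g$ read off from \eqref{eq-claim1}, namely $c_g^+(0) = 0$ (as $g$ is a cusp form) and $c_g^+(n) = -\overline{c_f^-(-n)}(4\pi n)^{k-1}$ for $n \ge 1$, which yields $(g,g)_k = -\sum_{n=1}^\infty (4\pi n)^{k-1}\overline{c_f^-(-n)}\, c_f^+(-n)$.

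\textbf{Main obstacle.} The routine parts are the Whittaker/incomplete-Gamma bookkeeping in \eqref{eq-claim1}. The genuinely delicate step is the Stokes-theorem computation of \eqref{IPF}: one must justify rewriting the integrand as an exact $2$-form (or $d$ of a $1$-form), handle the boundary carefully so that the vertical-side contributions truly cancel under the $\Gamma_\infty$-translation and the $S$-inversion on the arc $|z|=1$, and verify that the non-holomorphic part of $f$ makes no contribution to the $T \to \infty$ limit on the horizontal boundary. This is essentially the argument of Bruinier--Funke \cite{BF04}, so I would follow their treatment, adapting the normalization constants to the conventions used here.
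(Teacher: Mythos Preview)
Your proposal is correct and follows essentially the same approach as the paper. For \eqref{eq-claim1} the paper does exactly what you outline (term-by-term application of $\xi_{2-k}$ via Lemma~\ref{le54}(1) and the Whittaker/incomplete-Gamma conversion \eqref{fourier-convert}); for \eqref{IPF} the paper simply cites Bruinier--Funke \cite[Prop.~3.4]{BF04} rather than rederiving the Stokes argument you sketch, but your sketch is precisely the content of that citation, and \eqref{eq-claim2} is then immediate substitution in both treatments.
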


\begin{proof}
The Fourier series \eqref{eq-claim1} follows from $\xi_k$ applied
term by term to the Fourier series of $f(z)$, using Lemma \ref{lem:FourierExp1Harmonic}
and Lemma \ref{le54} (1).
(See also the introduction to \cite{BOR08}.)
The conversion  \eqref{fourier-convert} is applied twice, as 
 $$
 b_{-n}(4 \pi |n|)^{\frac{k}{2}-1} = \tilde{b}_{-n}\quad \longrightarrow_{\xi_{2-k}} \quad
  -\overline{\tilde{b}_{-n}}= \tilde{a}_n = a_n(4 \pi n)^{-\frac{k}{2}},
 $$
 yielding  $a_n = -\overline{b_{-n}} (4 \pi n)^{k-1} $
The  Petersson inner product formula \eqref{IPF}
is a special case of Bruinier-Funke \cite[Prop. 3.4]{BF04}.
\end{proof}

%%%%%%%%%%%%%%%%%
%%                                                %%
%%      Subsection  6.3 (formerly 4.3)           %%
%%                                                %%
%%%%%%%%%%%%%%%%%
\subsection{Non-Liftability of Holomorphic Cusp Forms}\label{sec63}

We show that there are no preimages under $\xi_{2-k}$ of 
holomorphic cusp forms for $PSL(2, \ZZ)$
that have polynomial growth at the cusp. 

%****************************
%
% Proposition 6.6 (formerly 4.10) [proof rewritten and expanded]
%
%****************************

\begin{proposition}\label{prop:liftOfCuspForms}
%{\rm (Non-Liftability of Holomorphic Cusp Forms)} 
Let $g \in S_k$ be a weight $k$ holomorphic cusp form for $\SL_2(\Z)$. 

(1) Let $M$ be a non-holomorphic weight $k$ modular form for $\SL_2(\Z)$ satisfying 
$$
\Delta_k M(z) = g(z).
$$
Then $M$ cannot have polynomial growth in $y$ as $y\to \infty$. \\

(2) There is no weight $2-k$  modular form $f(z) \in V_{2-k}^1(0)$ such that 
$$
\xi_{2-k}f(z) = g(z).
$$
\end{proposition}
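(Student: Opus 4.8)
The plan is to establish part (2) as a consequence of part (1), and to prove part (1) by contradiction using the regularized Petersson inner product machinery of Section~\ref{sec62}. First observe that (2) follows from (1): if $f(z) \in V_{2-k}^1(0)$ satisfied $\xi_{2-k} f(z) = g(z)$ with $g \in S_k$ nonzero, then by the factorization $\Delta_{2-k} = \xi_k \xi_{2-k}$ of Lemma~\ref{lem:DeltaFactor} one has $\Delta_{2-k}(\xi_k \cdot)$-type relations; more directly, applying $\xi_k$ to the equation and using $\Delta_k = \xi_{2-k}\xi_k$, we would need instead to run the argument on the weight $k$ side. The cleaner route: set $k' = 2-k$, so $g \in S_{2-k'}$ and $f \in V_{k'}^1(0)$ with $\xi_{k'} f = g$. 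Since $f \in V_{k'}^1(0) \subset H_{k'}$ has \emph{polynomial} (moderate) growth, whereas part (1) will show any $M$ with $\Delta_{k} M = g$ must have more than polynomial growth, we get a contradiction provided we can relate $f$ to such an $M$. Indeed, because $\xi_{k'} g = 0$ (as $g$ is holomorphic), we cannot directly form $\Delta M = g$ from $f$ alone; so instead part (2) should be proved directly by the same inner-product contradiction, and part (1) is the structural statement.

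For part (1), the key step is the regularized-inner-product identity. Suppose $M$ is a non-holomorphic weight $k$ modular form with $\Delta_k M = g$ and suppose, for contradiction, that $M$ has polynomial growth at the cusp. Write $\Delta_k = \xi_{2-k}\xi_k$, so $\xi_k M =: h$ is a weight $2-k$ form with $\xi_{2-k} h = g \in S_k$. By Lemma~\ref{lem:subspaceXi}, $\xi_k$ preserves moderate growth, so $h$ has polynomial growth, hence $h \in V_{2-k}^1(0) \subset H_{2-k}^+$ (using that $\xi_{2-k} h$ is a cusp form, so the non-holomorphic part of $h$ decays). Now I would compute $(g,g)_k$ two ways. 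On one hand $(g,g)_k > 0$ since $g \ne 0$. On the other hand, by Lemma~\ref{lem:regularizedInnerProduct} applied to $h \in H_{2-k}^+$ with $\xi_{2-k} h = g$, we have
$$
(g,g)_k = -\sum_{n=1}^\infty (4\pi n)^{k-1}\,\overline{c_h^-(-n)}\,c_h^+(-n).
$$
The point is that $h$, having \emph{polynomial} growth, has holomorphic part $h^+$ with \emph{no} principal part: $c_h^+(-n) = 0$ for all $n \ge 1$ (and the $n=0$ term contributes nothing to the sum). Therefore the right-hand side vanishes, forcing $(g,g)_k = 0$, contradicting $g \ne 0$. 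Hence no such $M$ of polynomial growth exists.

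For part (2): given $f \in V_{2-k}^1(0)$ with $\xi_{2-k} f = g$, the form $f$ has polynomial growth by definition of $V_{2-k}^1(0)$, so its holomorphic part has no principal part, and Lemma~\ref{lem:FourierExp1Harmonic} gives its precise Fourier shape with $c_f^+(-n) = 0$ for $n \ge 1$. Then exactly as above, Lemma~\ref{lem:regularizedInnerProduct} yields $(g,g)_k = -\sum_{n \ge 1}(4\pi n)^{k-1}\overline{c_f^-(-n)}\,c_f^+(-n) = 0$, contradicting $g \ne 0$ unless $g = 0$. This also re-derives (1) in the special case that $M$'s image $\xi_k M$ lands in $V_{2-k}^1(0)$, and the general case of (1) is handled as in the previous paragraph.

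\textbf{Main obstacle.} The delicate point is ensuring the hypotheses of Lemma~\ref{lem:regularizedInnerProduct} and of the Bruinier--Funke pairing are genuinely met — specifically, that a moderate-growth $h$ (or $f$) with $\xi$-image a cusp form really does lie in $H_{2-k}^+$ with vanishing principal part, so that the pairing formula \eqref{IPF} applies and collapses. Equivalently, the crux is the observation that \emph{polynomial growth} is incompatible with a \emph{nonzero} principal part, while a nonzero cusp-form image \emph{forces} a nonzero principal part via the nondegeneracy of $\{\cdot,\cdot\}$ in Proposition~\ref{pr64} (or directly via $(g,g)_k \ne 0$). Getting this incompatibility stated cleanly, and confirming that the constant term $n=0$ and the $y^{1-k}$ term in Lemma~\ref{lem:FourierExp1Harmonic} do not interfere, is where the care is needed; the rest is bookkeeping with the Fourier expansions already recorded.
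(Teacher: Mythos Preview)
Your approach is correct and essentially the same as the paper's: the core contradiction comes from computing $(g,g)_k$ via Lemma~\ref{lem:regularizedInnerProduct} and observing that moderate growth forces $c^+(-n)=0$ for $n\ge 1$, so the sum vanishes. The paper organizes things slightly differently---it reduces (1) to (2) by setting $f=\xi_k M$, verifying $\Delta_{2-k}f=\xi_k(\xi_{2-k}f)=\xi_k g=0$ so that $f\in V_{2-k}^1(0)$, and then proves only (2); you instead run the inner-product argument twice, once on $h=\xi_k M$ and once on $f$, which is the same content unrolled. Two small points: your opening paragraph (attempting to derive (2) from (1)) is a false start you rightly abandon, and in your proof of (1) you should state explicitly why $h\in V_{2-k}^1(0)$ rather than merely $V_{2-k}^2(0)$---namely $\Delta_{2-k}h=\xi_k g=0$ since $g$ is holomorphic---before invoking the $H_{2-k}^+$ machinery.
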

\begin{proof}

(1) Assume for a contradiction that such an $M(z)$ exists, in which case  $M(z)\in V_k^2(0)$.
Lemma \ref{lem:subspaceXi} now shows that $f (z) := \xi_k M(z) \in V_{2-k}^2(0)$.
We now assert $f(z) \in V_{2-k}^1(0)$. This holds since
$f(z)$ has moderate growth  and
$$\xi_{2-k} f(z) = \xi_{2-k}(\xi_k M)(z)= \Delta_k(M)(z) = g(z),$$
 which yields
$$
\Delta_{2-k}(f)(z) = \xi_{k}(\xi_{2-k}f)(z) = \xi_kg (z) =0.
$$
because $g(z)$ is a holomorphic form. Thus (1) will follow if we prove (2).
 
(2) We have $f(z) \in V_{2-k}^1(0)$ and we 
  $f(z) \in H_{2-k}$ since $V_{2-k}^1(0) \subset H_{2-k}$.
Since
$\xi_{2-k} f\in S_k$ we  have $f(z) \in H_{2-k}^{+}$.
According to Proposition \ref{pr64} there is a unique class of
lifts $[f] \in H_{2-k}^{+} \slash M_{2-k}^{!}$ whose image is $g(z)$.
We need to prove  that no member $f(z)$ of this class has moderate growth
at the cusp.
Since  our given $f(z) \in V_{2-k}^1(0)$, 
by Lemma \ref{lem:FourierExp1Harmonic} it has 
 a Fourier expansion of the form 
$$
f(z) =  \sum_{n=1}^{\infty} b_{-n} \Gamma(k-1, 4\pi \abs{n} y) e^{-2\pi i n z}
+  ( b_0 y^{1-k}+a_0)+\sum_{n =1}^{\infty} a_n e^{2\pi i n z} .
$$ 
By Lemma \ref{lem:regularizedInnerProduct} we have 
$$
g(z) = \xi_{2-k}f(z) = 
(k-1) \overline{b_0} - \sum_{n = 1}^{\infty} \overline{b_{-n}}(4 \pi n)^{k-1}e^{2 \pi i nz}.
$$
 We see in addition that $b_0 =0$, since $g(z)$
 is a cusp form. 
   We next compute the Petersson
 inner product $(g,g)_k$ using 
 Lemma \ref{lem:regularizedInnerProduct} to be
$$
(g,g)_k = (g, \xi_{2-k}f)_k = - \sum_{n \ge 1} b_{-n}(4\pi n)^{k-1} a_{-n}=0.
$$
 Here  we used $c_f^{+} (-n) = a_{-n}=0$ for all
$n \ge 1$, because the  Fourier terms  $c_f^{+}(-n) q^{-n}$ are not of moderate growth.
This is a contradiction  because
the Petersson inner product $(g,g)_k > 0$ since
$g(z) \ne 0$.  We conclude such an $f(z)$ cannot exist, which proves (2).
\end{proof}

%%%%%%%%%%%%%%%%%%%%%%%%%%%%%%%%%%%%%%%%%
% Remark 6.7 A
%%%%%%%%%%%%%%%%%%%%%%%%%%%%%%%%%%%%%%%%%
\begin{remark}
(1) One  may explicitly construct  harmonic weak Maass forms  that do map to holomorphic
cusp forms under $\xi_{2-k}$ using  Maass-Poincar\'{e} series, see 
Bringmann and Ono \cite{BrO07},
Bruinier et al.  \cite{BOR08}, and the discussion in Kent \cite[Sec 1.3.2]{Kent10}.
%%%%%%%%%
% We say ``harmonic weak Maass forms" not ``harmonic Maass forms".
%Such forms necessarily have exponential growth at the cusp by Prop. \ref{prop:liftOfCuspForms}.
%%%%%%%%%%%%%%%%%%%%%%%

(2) It is  interesting to check how the 
non-holomorphic Eisenstein series $f(z) = \htE_{2-k}(z, 0)$ evades the 
contradiction in the proof above. To allow Eisenstein series we must  use
a regularized inner product. 
A nonzero value is then permitted in the inner product calculation
due to the presence of the $n=0$ term
in the inner product $(g, g)^{reg}.$ 
%\jeff{QUERY 6. (4/24)This remark (2) needs checking.}
\end{remark}

%%%%%%%%%%%%%%%%%
%%                                          %%
%%       NEW SECTION 7:   Xi_k Eisenstein action   %%
%%                                          %%
%%%%%%%%%%%%%%%%%

\section{Action of $\xi_k$ on non-holomorphic Eisenstein series}\label{sec:7}

For later use we  compute the action of the $\xi_k$-operator on 
the non-holomorphic Eisenstein series.

%****************************
%
% Proposition 7.1
% formerly 5.6 [NEW]
%
%****************************

\begin{proposition}\label{pr71}
Let $k \in 2 \ZZ$. Then
$$
\xi_k \htE_k(z, s) = \begin{cases}
\htE_{2-k}(z, -\overline{s}) & \mbox{if} \quad k \le 0,\\
\overline{s}(\overline{s} + k -1 ) \htE_{2-k}(z, -\overline{s}) & \mbox{if} \quad k \ge 2.
\end{cases}
$$
In addition
$$
\xi_k \dhtE_k(z, s) = \begin{cases}
\dhtE_{2-k}(z, -\overline{s}) & \mbox{if} \quad k \le 0,\\
\overline{s}(\overline{s} + k -1 ) \dhtE_{2-k}(z, -\overline{s}) & \mbox{if} \quad k \ge 2.
%%%%%%%%%%%%%%%%%%%%%%%%
%\overline{\dhtE_{2-k}(z, -s)} & \mbox{if} \quad k \le 0,\\
%\overline{ s(s + k -1 ) \htE_{2-k}(z, -s)} & \mbox{if} \quad k \ge 2.
%%%%%%%%%%%%%%%%%%%%%%%%%
\end{cases}
$$
\end{proposition}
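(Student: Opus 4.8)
The plan is to compute the action of $\xi_k$ term-by-term on the Fourier expansion of $\htE_k(z,s)$ given in Proposition~\ref{prop:FourierExpansionArbitrary}, using the elementary formulas for $\xi_k$ acting on Fourier coefficient functions recorded in Lemma~\ref{le54}. Concretely, since $\htE_k(z,s)$ is real-analytic in $z$ for each fixed $s$ (with $\re s$ large so the series converges absolutely), it admits the Fourier expansion with constant term $C_0(y,s)$ built out of $y^s$ and $y^{1-s-k}$, and with nonzero Fourier coefficients built out of $y^{-k/2}W_{\frac12\sgn(n)k,\,s+\frac{k-1}{2}}(4\pi|n|y)$. I would apply $\xi_k$ to each piece: Lemma~\ref{le54}(3) handles the constant term, sending $y^s \mapsto \overline{s}\,y^{-1+k+\overline{s}}$ and $y^{1-s-k}\mapsto (1-k-\overline{s})\,y^{-\overline{s}}$, while Lemma~\ref{le54}(1),(2) handle the $n\le -1$ and $n\ge 1$ coefficients respectively, each time converting the parameter $s$ to $-\overline{s}$, flipping $n \mapsto -n$, and sending weight $k$ to weight $2-k$. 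One subtlety worth flagging: the $\Gamma$-factor prefactors in Proposition~\ref{prop:FourierExpansionArbitrary} differ for $k\le 0$ and $k\ge 2$ (because of the $|k|$), so the bookkeeping splits into those two cases, which is exactly why the stated answer has two cases.

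After applying $\xi_k$ coefficient-by-coefficient, the main task is to recognize the resulting Fourier series as (a scalar multiple of) the Fourier expansion of $\htE_{2-k}(z,-\overline{s})$, again read off from Proposition~\ref{prop:FourierExpansionArbitrary} with $k$ replaced by $2-k$ and $s$ replaced by $-\overline{s}$. This is the step where I expect the bulk of the work: one must check that the ratios of $\Gamma$-factors, the powers of $\pi$ and $\sqrt{2}$, the sign $(-1)^{k/2}$ versus $(-1)^{(2-k)/2}$, and the divisor sums $\sigma_{2s+k-1}(n) = \sigma_{2(-\overline{s})+(2-k)-1}(n)$ (this identity holds because $\sigma_\nu(n)=n^\nu\sigma_{-\nu}(n)$ and more directly because $2s+k-1$ and $2(-\overline s)+(2-k)-1 = 1-k-2\overline s$ are related by $\nu \mapsto -\overline\nu$, under which $\sigma_\nu$ on real arguments... here one must be slightly careful since $s$ is complex, but for $\re s$ large the identity $\sigma_{2s+k-1}(n) = \sum_{d\mid n} d^{2s+k-1}$ is what appears and matching is a direct check) all line up to produce precisely the constant $1$ (for $k\le 0$) or the constant $\overline{s}(\overline{s}+k-1)$ (for $k\ge 2$) times the target expansion. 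I would organize this as a short lemma-free computation: compare the constant terms first (this fixes the scalar), then verify the nonzero Fourier coefficients match with the same scalar.

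The hard part will be the $\Gamma$-factor bookkeeping, since the asymmetry between the $k\le 0$ and $k\ge 2$ cases in the definition $\htE_k(z,s) = \pi^{-(s+k/2)}\Gamma(s+\tfrac k2+\tfrac{|k|}{2})E_k(z,s)$ means $\xi_k$ does not simply intertwine the completed Eisenstein series symmetrically; the extra factor $\overline{s}(\overline{s}+k-1)$ for $k\ge 2$ is precisely the discrepancy that must be tracked through Lemma~\ref{le54}(2), where the coefficient $\overline{s}(1-k-\overline{s})$ appears with a sign. I would double-check consistency using the $\Delta_k$-eigenvalue $\lambda = s(s+k-1)$ from Theorem~\ref{th38}(3): since $\Delta_k = \xi_{2-k}\xi_k$ (Lemma~\ref{lem:DeltaFactor}), applying $\xi_{2-k}$ to both sides of the claimed identity should reproduce $\Delta_k \htE_k(z,s) = s(s+k-1)\htE_k(z,s)$, and the cross-term scalars $\overline{s}(\overline{s}+k-1)$ and its conjugate-partner must multiply back to $s(s+k-1)$, which is a useful sanity check on the signs.

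Finally, the statement for the doubly-completed series $\dhtE_k(z,s) = (s+\tfrac k2)(s+\tfrac k2-1)\htE_k(z,s)$ follows immediately from the $\htE_k$ case, because the prefactor $(s+\tfrac k2)(s+\tfrac k2-1)$ is a polynomial in $s$ alone (not involving $z$) and $\xi_k$ is conjugate-linear, so $\xi_k$ pulls it out as $\overline{(s+\tfrac k2)(s+\tfrac k2-1)} = (\overline{s}+\tfrac k2)(\overline{s}+\tfrac k2-1)$; one then checks that $(\overline{s}+\tfrac k2)(\overline{s}+\tfrac k2-1)$ is exactly the polynomial $((-\overline{s})+\tfrac{2-k}{2})((-\overline{s})+\tfrac{2-k}{2}-1)$ appearing in the definition of $\dhtE_{2-k}(z,-\overline{s})$ — indeed $-\overline{s}+\tfrac{2-k}{2} = -(\overline{s}+\tfrac k2-1)$ and $-\overline{s}+\tfrac{2-k}{2}-1 = -(\overline{s}+\tfrac k2)$, so the two products agree — hence the same scalar ($1$ or $\overline{s}(\overline{s}+k-1)$) survives and we get the displayed formula. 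By analytic continuation in $s$ (Theorem~\ref{th38}(1)), both identities, initially proved for $\re s$ large, extend to all $s\in\CC$.
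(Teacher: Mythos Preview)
Your approach is correct but differs from the paper's. You propose to compute $\xi_k$ term-by-term on the Fourier expansion of $\htE_k(z,s)$ and then match against the Fourier expansion of $\htE_{2-k}(z,-\overline{s})$; the paper instead works with the \emph{uncompleted} series $E_k(z,s)$ expressed as a sum over $\Gamma_\infty\backslash\SL_2(\Z)$ and applies $\xi_k$ to each summand using Lemma~\ref{lem:xi_commute} (the compatibility of $\xi_k$ with the slash operator), obtaining in a single line $\xi_k E_k(z,s)=\overline{s}\,E_{2-k}(z,\overline{s}+k-1)$. It then passes to the completed series by inserting the $\Gamma$-factors, and uses the functional equation $\htE_{2-k}(z,\overline{s}-1+k)=\htE_{2-k}(z,-\overline{s})$ (Theorem~\ref{th38}(2)) to rewrite the argument; the two cases $k\le 0$ and $k\ge 2$ then emerge from a single $\Gamma$-ratio $\Gamma(\overline{s}+\tfrac{k}{2}+\tfrac{|k|}{2})/\Gamma(\overline{s}+\tfrac{k}{2}+\tfrac{|2-k|}{2})$.

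The paper's route is shorter and avoids the bookkeeping you correctly flag as the hard part: matching powers of $\pi$, signs $(-1)^{k/2}$, the divisor-sum identity $\sigma_\nu(n)=n^{\nu}\sigma_{-\nu}(n)$, and the Whittaker-function parameters across two invocations of Proposition~\ref{prop:FourierExpansionArbitrary}. Your approach has the virtue of being entirely explicit and never invoking the functional equation, but the paper's method exploits the group-theoretic origin of $E_k(z,s)$ and Lemma~\ref{lem:xi_commute} to collapse the computation. Your treatment of the doubly-completed case is essentially identical to the paper's.
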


We obtain this result by first studying the action of $\xi_k$ on
the uncompleted Eistenstein series $E_k(z, s).$

%***************************************************
%
% Lemma  7. 2 [Formerly 9.1]
%
%***************************************************

\begin{lemma}\label{lem:xiDelta_arbWeight}
For  $k \in 2\ZZ$ there holds
$$\xi_{k} E_k(z, s) = \overline{s} E_{2-k} (z, \overline{s}+ k  - 1).$$
\end{lemma}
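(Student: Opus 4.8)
The plan is to compute $\xi_k$ applied to the defining series \eqref{Eis} directly, term by term, and recognize the result as a constant multiple of the weight $2-k$ Eisenstein series at a shifted spectral parameter. First I would observe that, since $\xi_k = 2iy^k \overline{\partial/\partial\bar z}$ and the sum \eqref{Eis} converges absolutely and locally uniformly for $\Re(s) > 1-k$, we may differentiate term by term. So it suffices to compute $\xi_k$ applied to a single summand $\phi_{m,n}(z,s) := y^s |mz+n|^{-2s}(mz+n)^{-k}$. Writing $|mz+n|^{2s} = (mz+n)^s(m\bar z+n)^s$, the summand becomes $y^s (mz+n)^{-s-k}(m\bar z + n)^{-s}$. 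The factor $(mz+n)^{-s-k}$ is holomorphic in $z$, hence killed by $\partial/\partial\bar z$ up to being carried along, so only the factors $y^s$ and $(m\bar z+n)^{-s}$ are differentiated. Using $\partial y/\partial \bar z = \tfrac{1}{2i}\cdot(-1)\cdot(-1) = -\tfrac{1}{2i}$ — more carefully, $y = (z-\bar z)/(2i)$ so $\partial y/\partial\bar z = -1/(2i)$ — and $\partial (m\bar z+n)^{-s}/\partial\bar z = -s\, m\, (m\bar z+n)^{-s-1}$, the product rule gives two terms.

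Next I would assemble these two terms. Applying $\overline{\partial/\partial\bar z}$ conjugates everything: $\overline{y^s} = y^{\bar s}$ (since $y$ is real and positive), $\overline{(mz+n)^{-s-k}} = (m\bar z+n)^{-\bar s - k}$ (as $k$ is a real integer), and $\overline{(m\bar z + n)^{-s}} = (mz+n)^{-\bar s}$. After multiplying by $2iy^k$, the first term (from differentiating $y^s$) contributes something proportional to $\bar s\, y^{\bar s + k - 1}(mz+n)^{-\bar s}(m\bar z + n)^{-\bar s - k}$, and the second term (from differentiating $(m\bar z+n)^{-s}$) contributes a term proportional to $\bar s\, y^{\bar s + k}\, \bar m\, (mz+n)^{-\bar s}(m\bar z+n)^{-\bar s - k - 1}$ — wait, I need to track the holomorphic/antiholomorphic bookkeeping carefully after conjugation, since the bar on the outside swaps the roles of $z$ and $\bar z$. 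The upshot I expect is that the two pieces combine (the second reorganizing via $y = \Im z$ and $|mz+n|^2 = (mz+n)(m\bar z+n)$) into a single clean expression $\bar s\, y^{\bar s + k - 1}\,(mz+n)^{-(2-k)}\,|mz+n|^{-2(\bar s + k - 1)}$, which is exactly the summand of $E_{2-k}(z, \bar s + k - 1)$ (note $-(2-k) = k-2$ in the exponent of $mz+n$, matching the weight $2-k$ Eisenstein numerator convention, and the exponent of $|mz+n|$ is $-2(\bar s + k - 1)$, matching spectral parameter $\bar s + k - 1$). Summing over $(m,n)$ and comparing with \eqref{Eis} for weight $2-k$ yields $\xi_k E_k(z,s) = \bar s\, E_{2-k}(z, \bar s + k - 1)$.

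The main obstacle will be the bookkeeping in the middle step: correctly handling the complex conjugation in the definition of $\xi_k$, which interchanges $(mz+n)$ and $(m\bar z+n)$, and then verifying that the two terms produced by the product rule genuinely coalesce into a single summand rather than leaving a leftover. The key algebraic identity that makes this work is $\partial/\partial\bar z\big(y^s(m\bar z+n)^{-s}\big)$ simplifying nicely — one checks that $y^s (m\bar z + n)^{-s} = \big(y/(m\bar z+n)\big)^s$ and $\partial/\partial\bar z\big(y/(m\bar z+n)\big) = \big(\partial y/\partial\bar z\big)/(m\bar z+n) - y m/(m\bar z+n)^2 = \big(-(m\bar z+n)/(2i) - ym\big)/(m\bar z+n)^2$; using $-(m\bar z+n)/(2i) = -(m\bar z + n)/(2i)$ and $ym = m(z-\bar z)/(2i)$, the numerator becomes $\big(-(m\bar z+n) - m(z - \bar z)\big)/(2i) = -(mz+n)/(2i)$, so the whole derivative is $-(mz+n)/(2i(m\bar z+n)^2)$. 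This collapses the product rule to a single term from the outset, after which the conjugation and multiplication by $2iy^k$ are routine. Once Lemma \ref{lem:xiDelta_arbWeight} is in hand, I would deduce Proposition \ref{pr71} by multiplying through by the Gamma and zeta completion factors and using the duplication/reflection bookkeeping of the completed series, together with Theorem \ref{th38}(3) to identify $E_{2-k}(z, \bar s + k - 1)$'s spectral data — but that is the content of the next result, not of this lemma.
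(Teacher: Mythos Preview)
Your proof is correct and follows essentially the same strategy as the paper: apply $\xi_k$ term by term to the defining sum of $E_k(z,s)$ in a region of absolute convergence, identify the result, and analytically continue. The only difference is organizational: the paper invokes Lemma~\ref{lem:xi_commute} to reduce everything to the single computation $\xi_k(y^s)=\bar s\,y^{\bar s+k-1}$, whereas you carry out the differentiation directly on the full summand $y^s(mz+n)^{-s-k}(m\bar z+n)^{-s}$ and collapse the product-rule terms via the nice identity $\partial_{\bar z}\big(y/(m\bar z+n)\big)=-(mz+n)/\big(2i(m\bar z+n)^2\big)$. This is exactly the content of Lemma~\ref{lem:xi_commute} unpacked in this special case, so the two arguments are the same at heart.
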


\begin{proof}
Using Lemma \ref{lem:xi_commute} we have, taking $\psi_s(z) = y^s$, 
we have $\psi_s(\gamma z) = \frac{y^s}{|cz+d|^{2s}}$. Then, for $Re(s)> 2$,
and with $\Gamma_{\infty} =\{[ \begin{smallmatrix}1 & n \\ 0 & 1\end{smallmatrix}]: \, n \in \ZZ\}$,
we have
\begin{align*}
\xi_k E_k(z, s) =& \frac{1}{2} \sum_{\gamma =[ \begin{smallmatrix}* & * \\ c & d\end{smallmatrix}]
\in \Gamma_{\infty} \backslash SL(2, \ZZ)}
 \xi_k \( (cz+d)^{-k} \psi_s (\gamma z)\)\\
  =& \frac{1}{2} \sum_{\gamma \in \Gamma_{\infty}\backslash SL(2, \ZZ)}
  %%%%%%%%%%%%%%%%%%%%%%%
  %\sum_{(c,d) \in \ZZ^2 \backslash |\{(0,0)\}}
  %%%%%%%%%%%%%%%%%%%%%%%%
   (cz+d)^{k-2} \( \xi_k \psi_s\) (\gamma z) \\
  =& \frac{1}{2}  \sum_{\gamma \in \Gamma_{\infty}\backslash SL(2, \ZZ)}(cz+d)^{k-2} \overline{s} \psi_{\overline{s} -1 + k} (\gamma z) \\
  =& \overline{s} E_{2-k} (z, \overline{s} - 1 + k)
\end{align*} 
using the identity
$2 i y^k \overline{ \frac{\partial }{\partial \overline{z}} y^s} = \overline{s} y^{\overline{s} - 1 + k}$.
The identity holds  on compact subsets of  $Re(s)>2$ and  of  $z \in \HH$
because the sums on both sides converge absolutely and 
uniformly  on these domains.
It now follows for
all $s \in \CC$ and all $z \in \HH$ by analytic continuation, away from poles.
\end{proof}

One can easily deduce from Lemma \ref{lem:xiDelta_arbWeight} a
direct proof of   Theorem \ref{th38}(3). Namely 
since  $\Delta_k = \xi_{2-k}\circ \xi_k$  we obtain 
$$\Delta_k E_k (z, s) = \xi_{2-k} \overline{s} E_{2-k} (z, \overline{s} -1 + k) = s(s+k-1) E_k(z,s).$$
%***************************************************
%
% PROOF OF PROP 7.1 [ formerly Corollary 9.2 %
%***************************************************

\begin{proof}[Proof of Proposition \ref{pr71}.]
Lemma \ref{lem:xiDelta_arbWeight} gives
$$
\xi_k \htE_k(z,s) = \ol{s} \frac{\Gamma \( s+ \frac{k}{2} + \frac{\abs{k}}{2})\)}{\Gamma\( \ol{s} + \frac{k}{2} + \frac{\abs{2-k}}{2}\)}\htE_{2-k}(z, \ol{s} - 1+k)  
= \ol{s} \frac{\Gamma \( s+ \frac{k}{2} + \frac{\abs{k}}{2}) \)}{\Gamma\( \ol{s} + \frac{k}{2} + \frac{\abs{2-k}}{2}\)}\htE_{2-k}(z, -\ol{s} ),
$$
where the functional equation of $\htE_k(z, s)$ was used to get the rightmost equality.
For $k\ge 2$, the $\Gamma$-quotient is $\frac{\Gamma(\ol{s}+k)}{\Gamma(\ol{s}+k-1)} = (\ol{s}+k-1)$.
For $k \le 0$, the $\Gamma$-quotient is $\frac{\Gamma(\ol{s})}{\Gamma(\ol{s}+1)} = \frac{1}{\ol{s}}$.  The result for $\htE_k(z, s)$ follows. 

 The result for $\dhtE(z, s) = (s+ \frac{k}{2}) (s+ \frac{k}{2}-1) \htE_k(z, s)$
follows on noting that
$$
\overline{ (s+ \frac{k}{2}) (s+ \frac{k}{2}-1) }= 
(-\overline{s}+ \frac{2-k}{2}) (-\overline{s} +\frac{2-k}{2}-1).
 $$
\end{proof}

%%%%%%%%%%%%%%%%%
%%                                          %%
%%       NEW SECTION 8:   Taylor coefficients    %%
%%                                          %%
%%%%%%%%%%%%%%%%%
\section{Taylor Series of  Non-holomorphic Eisenstein Series}\label{sec:TS}

We  show that the Taylor series coefficients of the
doubly completed Eisenstein series $\dhtE_k(z, s)$
in the $s$-variable  at $s=0$ define polyharmonic Maass forms
of eigenvalue $\lambda=s(s+k-1)$.
We consider  the doubly-completed Eisenstein series $\dhtE_k(z, s)$ 
rather than the singly completed $\htE_k(z, s)$ because it
is an entire function of $s$ for all $k$. 
The case $\lambda=0$ has special features compared
to the general $\lambda$ case, which we treat in \cite{LR15S};

We will use 
rescaled Taylor coefficients of $E_k(z, 0)$, matching the
notation used in Section  \ref{sec:Weight}, namely
\begin{equation}\label{eqn:E_k-coefficients}
\dhtE_k(z,s) = 
\begin{cases} 
\sum_{n=0}^\infty F_{n, k} (z) s^n  & \mbox{for weights} \,\,  k \le 0, \\ 
 \sum_{n=0}^\infty G_{n,k}(z) s^n & \mbox{for weights} \,\,  k \ge 2.
 \end{cases}
 \end{equation}
In consequence the derivatives with respect to $s$ exhibit
\begin{equation}\label{eqn:F-coefficients}
\frac{\partial^n}{\partial s^n} \dhtE_k(z, s) |_{s=0}
= 
\begin{cases} 
 n! F_{n, k} (z)   & \mbox{for weights} \,\,  k \le 0,  \\ 
 n! G_{n,k}(z) & \mbox{for weights} \,\,  k \ge 2.
 \end{cases}
\end{equation}

%***************************************************
%
%
% Section 8.1 Initial Taylor coefficients
%
%
%***************************************************

\subsection{Initial Taylor coefficients at $s=0$  of 
doubly completed  weight $0$ and $2$ Eisenstein series.}\label{sec:new81}

In what follows we  use the abbreviated notation $F_n(z)= F_{n, 0}(z)$
and $G_n(z) = G_{n, 2}(z)$.
% used in Section \ref{sec1}.

%***************************************************
%
% Lemma 8.1[formerly 4.6]
%
%***************************************************

\begin{lemma}\label{le27}
% {\rm (Fourier Series of $E_0^{\ast\ast}$ at  $s=0$)}
The Taylor series  of the doubly completed Eisenstein series
 $\dhtE_{0}(z, s) = \sum_{n=0}^{\infty} F_{n}(z) s^n$ 
 at $s_0=0$ has
$$F_{0}(z) =\dhtE_0(z, 0) = \frac{1}{2}.$$
In addition
\begin{align*}
F_{1}(z) = &\frac{\partial}{\partial s}\dhtE_0(z,s) \big|_{s=0}  =
- \frac{1}{2}\gamma+ \log(4\pi) +
\log \( \sqrt{y} \abs{\Delta(z)}^{\frac{1}{12}}\)
\end{align*}
%%%%%%%%%%%%%%%%%%%%%%%%%%%%%%%%%%%%%%%%%%%%%%
% OLD FORMULA DISAGREES IN SEVERAL PLACES
% It SAID
% $ -\frac{1}{2} + \gamma + \log(4\pi) + \log \( y \abs{\Delta(z)}^{\frac{1}{6}}\)$
%%%%%%%%%%%%%%%%%%%%%%%%%%%%%%%%%%%%%%%%%%%%%%
\end{lemma}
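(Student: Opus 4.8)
The plan is to extract the first two Taylor coefficients of $\dhtE_0(z,s) = s(s-1)\pi^{-s}\Gamma(s) E_0(z,s)$ at $s=0$ from the classical Laurent expansion of $E_0(z,s)$ near $s=0$, which in turn comes from the functional equation together with Kronecker's first limit formula. First I would recall Kronecker's limit formula in the form giving the Laurent expansion of $\htE_0(z,s) = \pi^{-s}\Gamma(s) E_0(z,s)$ at $s=0$: since $\htE_0(z,s) = \htE_0(z,1-s)$ and $\htE_0(z,s)$ has a simple pole at $s=1$ with residue $\tfrac12$ and constant term given by Kronecker's formula, one gets
\[
\htE_0(z,s) = -\frac{1}{2s} + \Big(\tfrac{1}{2}\log\big(\sqrt{y}\,|\Delta(z)|^{1/12}\big) - \log(2) + \tfrac{1}{2}(\gamma - \log(4\pi)) - \cdots\Big) + O(s)
\]
near $s=0$ (I would fix the exact constant by comparing with the standard statement, e.g. from the known expansion at $s=1$ reflected via $s \mapsto 1-s$). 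The key algebraic point is that $\dhtE_0(z,s) = s(s-1)\htE_0(z,s)$, so multiplying the Laurent series by $s(s-1)$ removes the pole and shifts indices: if $\htE_0(z,s) = \frac{a_{-1}}{s} + a_0 + a_1 s + \cdots$ then $\dhtE_0(z,s) = s(s-1)\htE_0(z,s) = -a_{-1} + (a_{-1} - a_0)s + \cdots$, so $F_0(z) = -a_{-1}$ and $F_1(z) = a_{-1} - a_0$.

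Thus the first step yields $F_0(z) = -a_{-1} = -(-\tfrac12) = \tfrac12$, confirming $\dhtE_0(z,0) = \tfrac12$ (one could also check this directly: $\dhtE_0(z,s)$ is entire, $\htE_0$ has residue $-\tfrac12$ at $s=0$, and the factor $s(s-1)$ contributes $s \cdot (-1)$ at leading order). For $F_1(z)$ I would compute $a_{-1} - a_0 = -\tfrac12 - a_0$, where $a_0$ is the constant term in the Laurent expansion of $\htE_0(z,s)$ at $s=0$. By the functional equation $a_0$ equals the constant term of $\htE_0(z,s)$ at $s=1$, which is exactly what Kronecker's first limit formula computes: writing $\htE_0(z,s) = \tfrac{1}{2(s-1)} + C(z) + O(s-1)$ near $s=1$, one has the classical identity $C(z) = \tfrac12\big(\gamma - \log 2 - \log(\sqrt{y}\,|\eta(z)|^2)\big)$ (in some normalization), and converting via $|\Delta(z)|^{1/12} = |\eta(z)|^2$ and tracking the $\pi$-factors gives $a_0 = \tfrac12\gamma - \log(4\pi) - \log(\sqrt{y}\,|\Delta(z)|^{1/12})$ up to the bookkeeping of constants. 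Then $F_1(z) = -\tfrac12 - a_0 = -\tfrac12\gamma + \log(4\pi) + \log(\sqrt{y}\,|\Delta(z)|^{1/12})$ as claimed.

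The main obstacle I expect is purely the constant-tracking: getting the numerical constants ($\gamma$, $\log 2$, $\log(4\pi)$, the $\tfrac12$'s) exactly right when passing between the various normalizations of the Eisenstein series (the $\tfrac12 \sum$ convention, the $\zeta(2s)$ factor, the completion $\pi^{-s}\Gamma(s)$, and the reflection $s \mapsto 1-s$). There is no conceptual difficulty — Kronecker's first limit formula is classical and stated in many references — but the sign and constant conventions in \eqref{100}, in $\htE_0$, and in the statement of Kronecker's formula must be reconciled carefully, so I would do this by starting from one fixed reference form of Kronecker's formula and propagating it step by step, using the functional equation from Theorem~\ref{th38} to move the known expansion at $s=1$ to $s=0$. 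The real-analyticity and the fact that $\dhtE_0(z,s)$ is entire (Theorem~\ref{th38}) justify that these Taylor coefficients are well-defined functions of $z$, and Proposition~\ref{prop:FourierExpansionArbitrary} could be used as an independent cross-check on the $y$-dependence of the constant term of $F_1(z)$.
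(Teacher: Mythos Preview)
Your proposal is correct and takes essentially the same approach as the paper: both derive $F_0$ and $F_1$ from Kronecker's first limit formula transported via the functional equation to $s=0$, followed by multiplication by $s(s-1)$. The only cosmetic differences are that the paper reads off $F_0(z)=\tfrac12$ directly from the Fourier expansion of Proposition~\ref{prop:FourierExpansionArbitrary} rather than from the residue, and it carries out the reflection $s\mapsto 1-s$ on the uncompleted series $E_0(z,s)$ (introducing the scattering factor $\gamma(s)=\pi^{2s-1}\Gamma(1-s)/\Gamma(s)$) rather than on $\htE_0(z,s)$ as you do; your route via $\htE_0$ is slightly cleaner since the functional equation there has no gamma quotient to expand.
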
 

\begin{proof}
The first assertion is derived from the Fourier expansion
 in Proposition \ref{prop:FourierExpansionArbitrary}, 
using $\dhtE_0(z, s)= s(s-1) \htE_0(z, s)$,
after observing that for $k=0$ all the Fourier terms with $n \ne 0$ vanish,
and  the constant term becomes
$$
s(s-1) C_0(y, s)|_{s=0} = s(s-1) \htz(2s) y^s)\big|_{s=0} + s(s-1)\zeta(2-2s)y^{1-s}\big|_{s=0} = \frac{1}{2},
$$
coming from the simple pole of $\htz(2s)$ at $s=0$ having residue $-\frac{1}{2}.$

The second 
assertion is deduced from  Kronecker's first limit formula \cite[Chap. 1, Theorem 1]{Sie80}
which states\footnote{Siegel's definition of the Eisenstein series has an extra factor of $2$
compared to \eqref{100}.}
$$
2E_0(z, s) = \frac{\pi}{s-1} +2 \pi (\gamma - \log 2) - \log(\sqrt{y}\abs{\eta(z)}^{2}) +O( (s-1)),
$$
where $\gamma$ is Euler's constant, and
the Dedekind eta function satisfies  $|\eta(z)| = |\Delta(z)|^{1/24}$.
Using the functional equation for $\htE_{0}(z, s)= \htE_0(z, 1-s)$ we  obtain
$$
E_0(z, s) = \gamma(s) E_0(z, 1-s)
$$
with
$$
\gamma(s) := \frac{ \pi^{-(1-s)}\Gamma(1-s)}{\pi^{-s} \Gamma(s)}
= s \pi^{2s- 1 } \frac{\Gamma(1-s)}{ \Gamma(1+s)}.
$$
Computing Laurent expansions at $s=0$ of $\gamma(s)$ and $E_0(z, 1-s)$
yields  analogous Kronecker's limit formula to be derived at $s=0$.
We obtain
$$
E_0(z, s) = -\frac{1}{2} + (-\log (2 \pi) - \log \sqrt{y} |\Delta(z)|^{\frac{1}{12}}| )s +O(s^2).
$$
%%%%%%%%%%%%%%%%%%%%%%%%%%%%
% This formula needs re-checking against literature.  
%%%%%%%%%%%%%%%%%%%%%%%%%%%%%
It was given in Stark \cite{Sta77}, see also Katayama \cite[(3.2.8)]{Kat10}. 
Another Laurent series calculation using $\dhtE_0(z,s)= s(s-1) \htE_0(z, s)$ yields
$$
\dhtE_0(z, s) = \frac{1}{2} - \big( \frac{1}{2} \gamma +\log(4 \pi) 
+ \log (\sqrt{y}|\Delta(z)|^{\frac{1}{12}})\big)s +O(s^2).
$$
%%%%%%%%%%%%%%%%%%%%%%%%%%%%%%%%%%%%%%
%OLD FORMULA disagrees and SAYS
% $$E_0(z,s) =2+  2\log\( y \abs{\Delta(z)}^{\frac{1}{6}} \)s + O(s^2).$$
%
%%%%%%%%%%%%%%%%%%%%%%%%%%%%%%%%%%%%%%
\end{proof}

%***************************************************
%
% Lemma 8.2  [Former section 5]
%
%***************************************************

\begin{lemma}\label{le37} 
%{\rm (Fourier Series of $E_0^{\ast\ast}(z,s)$ at  $s=0$)}
The Taylor series  of the doubly completed Eisenstein series
$\dhtE_{2}(z, s) = \sum_{n=0}^{\infty} G_{n}(z) s^n$ at $s_0=0$ has
$$
G_{0}(z) \equiv 0
$$ 
and 
$$
G_{1}(z) =\frac{\partial}{\partial s}|_{s=0} \dhtE_2(z, 0) 
= -\frac{\pi}{6} 
- \frac{1}{2y} + 4 \pi \big( \sum_{n=1}^{\infty} \sigma_1(n)e^{2 \pi i n z}\big)
%%%%%%%%%%%%%%%%%%%%%%%%%%%%%%%%%%%%%%%
% OLD ANSWER
%-\frac{\pi}{6 + \Big(1-\frac{3}{\pi y}- 24 \sum_{n\ge 1} \sigma_1(n) e^{2\pi i mz} \Big)
%%%%%%%%%%%%%%%%%%%%%%%%%%%%%%%%%%%%%%%%%%
$$
where $z= x+iy \in \HH$. This function is holomorphic in $z$ after excluding 
the non-holomorphic term $\frac{1}{2y}$ appearing in the constant term of its Fourier expansion. 
\end{lemma}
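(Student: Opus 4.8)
The plan is to reduce the whole computation to evaluating $\htE_2(z,0)$ and then reading off its Fourier expansion. First, since Theorem \ref{th38}(1) says $\htE_2(z,s)$ is an \emph{entire} function of $s$ for $k=2$, the factor $(s+1)s$ in $\dhtE_2(z,s)=(s+1)s\,\htE_2(z,s)$ immediately forces $G_0(z)=\dhtE_2(z,0)=0$. Differentiating the product and again using holomorphy of $\htE_2(z,s)$ at $s=0$ gives
\[
G_1(z)=\frac{\partial}{\partial s}\dhtE_2(z,s)\Big|_{s=0}=\big[(2s+1)\htE_2(z,s)+(s+1)s\,\partial_s\htE_2(z,s)\big]_{s=0}=\htE_2(z,0),
\]
so the lemma is equivalent to the identity $\htE_2(z,0)=-\tfrac{\pi}{6}-\tfrac{1}{2y}+4\pi\sum_{n\ge1}\sigma_1(n)e^{2\pi i nz}$ (up to the sign issues flagged in the footnote).

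Next I would specialize the Fourier expansion of Proposition \ref{prop:FourierExpansionArbitrary} to $k=2$ and let $s\to0$. The constant term $C_0(y,s)$ splits into a $y^s$-piece, which is regular at $s=0$ and contributes $\htz(2)=\zeta(2)/\pi$, and a $y^{1-s-k}$-piece whose $\Gamma$-prefactor collapses to $\Gamma(s+1)/\Gamma(s)=s$; multiplying this zero against $\htz(-2s)$, which has a simple pole at $s=0$ (from the pole of $\Gamma(s/2)$ with $\zeta(0)=-\tfrac12$), yields a finite limit proportional to $1/y$. For the non-constant terms, each $n<0$ summand carries a factor $1/\Gamma\!\big(s+\tfrac{k}{2}(1+\sgn{n})\big)=1/\Gamma(s)$, which vanishes at $s=0$, so only $n>0$ survives. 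For those I would use $\sigma_{2s+1}(n)\to\sigma_1(n)$ together with the Whittaker identity $W_{1,1/2}(w)=w\,e^{-w/2}$ — the $L_0^{(\alpha)}\equiv1$ case of the formula invoked in the proof of Lemma \ref{lem:FourierExp1Harmonic} — to collapse each term to a constant multiple of $\sigma_1(n)e^{2\pi i nz}$, and then evaluate the overall prefactor $(-1)^{k/2}(\sqrt2\,\pi)^{2s+k}\pi^{-s-k/2}\Gamma(s+2)$ at $s=0$ to obtain the stated $q$-series. The claimed holomorphy of $G_1(z)$ away from the $\tfrac1{2y}$ term is then transparent, since among all contributions only the $y^{1-s-k}$ part of $C_0$ fails to be holomorphic in $z$.

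The main obstacle is bookkeeping rather than conceptual: correctly resolving the $0\cdot\infty$ indeterminacy in the $y^{1-s-k}$ part of the constant term (pairing the zero of $1/\Gamma(s)$ against the pole of $\htz(-2s)$ — which is exactly where the precise constant and its sign are pinned down) and carrying all $\Gamma$-function and power-of-$\pi$ normalizations through the $s\to0$ limit without slips. As an independent consistency check I would verify $\xi_2 G_1(z)=\tfrac12=\wt{F}_0(z)$, which is forced by Proposition \ref{pr71} (equivalently Theorem \ref{thm:ladder}(2), since $\wt{G}_1=G_1$ because $G_0\equiv0$) and is immediate from the closed form: $\xi_2$ kills the holomorphic part and $\xi_2\!\big(-\tfrac1{2y}\big)=\tfrac12$. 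One may also cross-check the $q$-expansion against the classical weight-$2$ quasimodular Eisenstein series $E_2(z)=1-24\sum_{n\ge1}\sigma_1(n)e^{2\pi i nz}$ and its standard non-holomorphic modular completion.
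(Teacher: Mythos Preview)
Your approach is essentially the same as the paper's: both argue $G_0\equiv 0$ from the factor $s(s+1)$ and the entirety of $\htE_2(z,s)$, identify $G_1(z)$ with $\pm\htE_2(z,0)$, and then evaluate $\htE_2(z,0)$ by specializing the Fourier expansion of Proposition~\ref{prop:FourierExpansionArbitrary}, using the same Whittaker identity $W_{1,1/2}(w)=w\,e^{-w/2}$ and the same zero--pole cancellation $s\cdot\htz(-2s)\to\tfrac12$ for the $y^{-1}$ term. Your derivative computation $G_1(z)=+\htE_2(z,0)$ is the correct one (the paper writes $-\htE_2(z,0)$, one of the sign slips its footnote alludes to), and your $\xi_2$ consistency check is a sound way to pin down the signs independently.
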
 

\begin{proof}
We start from the Fourier series expansion of $\htE_2(z, s)$ given in
Proposition \ref{prop:FourierExpansionArbitrary}. It gives
$$
\htE_2(z, 0)= ( \hat{\zeta}(2) + \frac{1}{2y}) -  4 \pi \big(\sum_{n=1}^{\infty} \sigma_1(n)e^{2 \pi i n z}\big),
$$
where we use 
$W_{1, \frac{1}{2}}(y) = y \, e^{-\frac{1}{2}y}$ (\cite[(13.18.2)]{NIST}) to get
$\frac{1}{4 \pi n y}W_{1, \frac{1}{2}}(4 \pi ny) e^{2 \pi i n x} = e^{2 \pi i n z},$
and  we also use $\lim_{s \to 0} (-s) \hat{\zeta}(-2s) y^s =  \frac{1}{2}$.
It is analytic at $s=0$ so that $\dhtE_2(z, s)=s(s+1) \htE(z, s)$ has
$G_0(z) = \dhtE_2(z, 0) \equiv 0.$
Furthermore we have 
$G_{1}(z) = \frac{\partial}{\partial s} \dhtE_2(z, s) \big|_{s=0} = -\htE_2(z, 0),$
which with $\hat{\zeta}(2)= \frac{\pi}{6}$ gives the result.
\end{proof}

%%%%%%%%%%%%%%%%%%%%%%%%%%%
%The signs in this formula
%disagree with formula given without proof in Duke-Imamogluu-Toth,page 2.
%Do the two coefficients in the constant term have opposite signs? }
%%%%%%%%%%%%%%%%%%%%%%%%%%%%%%

%***************************************************
%
%
% NEW Section 8.2
%
%
%***************************************************

\subsection{Recursions for $\Delta_k$ and $\xi_k$ action on Taylor  coefficients of Eisenstein series}\label{sec:new72}

We establish  recursion relations under $\Delta_k$ and $\xi_k$
relating the Taylor coefficients $F_k(z)$ and $G_k(z)$.

%%%%%%%%%%%%%%%%%%%%%
% New Proposition 8.3  [Extracted from old Lemma 5.3]
%%%%%%%%%%%%%%%%%%%%%
\begin{proposition}\label{lem:xiDelta_arbWeight_Taylor}
With the Taylor coefficients of $E_k(z,s)$ defined in \eqref{eqn:E_k-coefficients} 
and setting $F_{n,k}^{-}(z) = (-1)^n F_{n,k}(z)$, we have 
\begin{enumerate}
\item For $k\ge2$ an even integer, 
\begin{equation}\label{G-recursion}
\Delta_k G_{n,k} (z) =  (k-1) G_{n-1, k}(z) + G_{n-2, k}(z),
\end{equation}
where we define $G_{-1, k}(z) = G_{-2, k}(z) \equiv 0$.
\item For $k \le 0$ an even integer, 
\begin{equation}\label{F-recursion}
\Delta_{k} F_{n,k}^{-}(z)  = (1-k) F_{n-1, k}^{-} (z) + F_{n-2, k}^{-}(z),
\end{equation}
where we define $F_{-1, k}^{-}(z) = F_{-2, k}^{-}(z) \equiv 0$.
\item For $k \ge 2$ an even integer, 
\begin{equation}\label{G-to-F-recursion}
\xi_{k} G_{n,k} (z) =  
(k-1) F_{n-1, 2-k}^{-}(z) + F_{n-2, 2-k}^{-}(z).
\end{equation}
\item For $k \le 0$ an even integer, 
\begin{equation}\label{F-to-G-recursion}
\xi_{k} F_{n,k}^{-}(z) =  G_{n,2-k}(z).
\end{equation}
\end{enumerate}
\end{proposition}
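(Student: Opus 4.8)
The plan is to obtain all four recursions by expanding, as power series in $s$ (respectively $\bar{s}$), two functional identities for the doubly-completed Eisenstein series that are already available: the $\Delta_k$-eigenfunction identity $\Delta_k \dhtE_k(z,s) = s(s+k-1)\dhtE_k(z,s)$ from Theorem~\ref{th38}(3), and the $\xi_k$-action formulas of Proposition~\ref{pr71}, namely $\xi_k \dhtE_k(z,s) = \dhtE_{2-k}(z,-\bar{s})$ for $k \le 0$ and $\xi_k \dhtE_k(z,s) = \bar{s}(\bar{s}+k-1)\dhtE_{2-k}(z,-\bar{s})$ for $k \ge 2$. Throughout I would write $c_{n,k}(z)$ for the $n$-th Taylor coefficient of $\dhtE_k(z,s)$ at $s=0$, so $c_{n,k} = F_{n,k}$ for $k \le 0$ and $c_{n,k} = G_{n,k}$ for $k \ge 2$. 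Since $\dhtE_k(z,s)$ is entire in $s$ (Theorem~\ref{th38}(2)) and jointly real-analytic, the Taylor series $\sum_m c_{m,k}(z)s^m$ converges locally uniformly in $(z,s)$ together with all $z$-derivatives (Cauchy estimates in $s$), so both $\Delta_k$ (which commutes with $\partial/\partial s$) and the first-order operator $\xi_k$ may be applied to it term by term; I would record this justification at the outset.

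For parts (1) and (2): applying $\Delta_k$ term by term to $\dhtE_k(z,s) = \sum_{m\ge 0} c_{m,k}(z)s^m$ and using the eigenfunction identity with $s(s+k-1) = s^2 + (k-1)s$ gives
\[
\sum_{m\ge 0}\big(\Delta_k c_{m,k}(z)\big)s^m = \big(s^2 + (k-1)s\big)\sum_{m\ge 0}c_{m,k}(z)s^m.
\]
Comparing coefficients of $s^n$ yields $\Delta_k c_{n,k}(z) = (k-1)c_{n-1,k}(z) + c_{n-2,k}(z)$, with $c_{-1,k}=c_{-2,k}\equiv 0$. For $k \ge 2$ this is exactly \eqref{G-recursion}. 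For $k \le 0$ I would substitute $c_{n,k} = F_{n,k} = (-1)^n F_{n,k}^-$: after substituting and multiplying through by $(-1)^n$, the factor $(k-1)$ becomes $1-k$ in front of $F_{n-1,k}^-$ while the $F_{n-2,k}^-$ term is unchanged, which is \eqref{F-recursion}.

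For parts (3) and (4): since $\xi_k$ is conjugate-linear (hence $\RR$-linear) and may be applied term by term,
\[
\xi_k\dhtE_k(z,s) = \sum_{m\ge 0}\overline{s^m}\,\xi_k\big(c_{m,k}(z)\big) = \sum_{m\ge 0}\bar{s}^m\,\xi_k\big(c_{m,k}(z)\big).
\]
For $k \le 0$, Proposition~\ref{pr71} expands the other side as $\dhtE_{2-k}(z,-\bar{s}) = \sum_{m\ge 0}c_{m,2-k}(z)(-\bar{s})^m$, and since $2-k \ge 2$ one has $c_{m,2-k} = G_{m,2-k}$; comparing coefficients of $\bar{s}^n$ (legitimate because $\dhtE_k$ is entire in $s$, so $\bar{s}$ ranges over all of $\CC$) gives $\xi_k F_{n,k}(z) = (-1)^n G_{n,2-k}(z)$, i.e. $\xi_k F_{n,k}^-(z) = G_{n,2-k}(z)$, which is \eqref{F-to-G-recursion}. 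For $k \ge 2$, Proposition~\ref{pr71} gives $\xi_k\dhtE_k(z,s) = \bar{s}(\bar{s}+k-1)\dhtE_{2-k}(z,-\bar{s})$; here $2-k \le 0$, so $\dhtE_{2-k}(z,-\bar{s}) = \sum_{m\ge 0}F_{m,2-k}(z)(-\bar{s})^m = \sum_{m\ge 0}F_{m,2-k}^-(z)\bar{s}^m$ — precisely what the $F^-$ normalization absorbs — and the right-hand side becomes $(\bar{s}^2 + (k-1)\bar{s})\sum_{m\ge 0}F_{m,2-k}^-(z)\bar{s}^m$. Comparing the coefficient of $\bar{s}^n$ with $\xi_k G_{n,k}(z)$ gives $\xi_k G_{n,k}(z) = (k-1)F_{n-1,2-k}^-(z) + F_{n-2,2-k}^-(z)$, which is \eqref{G-to-F-recursion}.

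I expect no serious obstacle: the proposition is essentially a formal consequence of Theorem~\ref{th38}(3) and Proposition~\ref{pr71}, and the bulk of the work is bookkeeping. The one point requiring genuine care is that $\xi_k$, being conjugate-linear, converts a power series in $s$ into a power series in $\bar{s}$ — this is exactly why the right-hand sides of Proposition~\ref{pr71} appear evaluated at $-\bar{s}$, and why the coefficient matching in (3) and (4) must be carried out in the $\bar{s}$ variable; keeping the signs straight and checking that the $F^-$ renormalization absorbs them so that the dual-weight recursions come out cleanly is the only delicate step.
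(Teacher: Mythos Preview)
Your proposal is correct and follows essentially the same route as the paper: expand the eigenfunction identity $\Delta_k\dhtE_k(z,s)=s(s+k-1)\dhtE_k(z,s)$ and the $\xi_k$-formulas of Proposition~\ref{pr71} as power series in $s$ (respectively $\bar s$) and compare coefficients, using the $F^{-}$ normalization to absorb the signs. If anything, your write-up is slightly more careful than the paper's in justifying term-by-term application of $\Delta_k$ and $\xi_k$ via the entirety of $\dhtE_k(z,s)$ in $s$.
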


\begin{proof}
(1) Let  $k \ge 2$. Then
$$
\Delta_k ( \dhtE_k(z, s)) = \Delta_k\Big( \sum_{n=0}^{\infty} G_{n,k}(z) \, s^n \Big) = \sum_{n=0}^{\infty} \Delta_k G_{n,k}(z) s^n.
$$
By Theorem \ref{th38} (3), 
\begin{eqnarray*}
\Delta_k ( \dhtE_k(z, s)) &=& s(s+k-1) \dhtE_k(z, s)\\
&=& \sum_{n=0}^{\infty} G_{n,k} s^{n+2} + (k-1) \sum_{n=0}^{\infty} G_{n, k} s^{n+1}\\
&=& (k-1)G_{0,k}(z) s + \sum_{n=2}^{\infty}  \big( (k-1)G_{n-1, k}(z) + G_{n-2, k}(z)\big) s^n
\end{eqnarray*}
Term by term comparison establishes 
$\Delta_k G_{n,k}(z) = (k-1)G_{n-1, k}(z) + G_{n-2, k}(z)$
for all $n \ge 0$,
using the convention that $G_{-1,k}(z) = G_{-2, k}(z) \equiv 0.$

(2) Let $k \ge 0$. An argument  similar to (1) yields 
$\Delta_k F_{n,k}(z) = (k-1) F_{n-1,k}(z) + F_{n-2, k}(z)$
for all $n \ge 0$, 
using the convention that $F_{-1,k}(z) = F_{-2, k}(z) \equiv 0.$
Substituting $F_{n,k}^{-}(z) = (-1)^n F_{n,k}(z) $ yields \eqref{F-recursion}.

(3) We argue similarly to (1) using Proposition \ref{pr71}.
Suppose $k \ge 2$. Then
$$
\xi_k ( \dhtE_k(z, s)) = \xi_k\Big( \sum_{n=0}^{\infty} G_{n,k}(z) \, s^n \Big) = \sum_{n=0}^{\infty} \xi_k G_{n,k}(z) (\ol{s}^n).
$$
By Proposition \ref{pr71},
\begin{eqnarray*}
\xi_k ( \dhtE_k(z, s)) &=& \ol{s}(\ol{s}+k-1) \dhtE_{2-k}(z, -\ol{s})
%= \ol{s}(\ol{s}+k-1) \sum_{n=0}^{\infty} F_{n,2-k}(z) (-\ol{s})^n\\
=  \ol{s}(\ol{s}+k-1) \Big( \sum_{n=0}^{\infty} F_{n,2-k}^{-}(z) (\ol{s})^n\Big) \\
&=& (k-1) F_{0, 2-k}(z) \ol{s} + \sum_{n=2}^{\infty} \big( (k-1) F_{n-1, 2-k}^{-}(z) + F_{n-2, 2-k}^{-}(z)\big) (\ol{s})^n.
\end{eqnarray*}
Term by term comparison in $\ol{s}^n$ yields, 
$\xi_k G_{n,k}(z) = (k-1) F_{n-1, 2-k}^{-}(z) + F_{n-2, 2-k}^{-}(z).$
for all $n \ge 0$.

(4) Let $k \le 0$, so
$$
\xi_k ( \dhtE_k(z, s)) =  \sum_{n=0}^{\infty} \xi_k F_{n,k}(z) (\ol{s}^n) = \sum_{n=0}^{\infty} F_{n,k}^{-}(z) (-\ol{s}^n).
$$
By Proposition \ref{pr71},
$$
\xi_k ( \dhtE_k(z, s)) =  \dhtE_{2-k}(z, -\ol{s})= \ol{s}(\ol{s}+k-1) \sum_{n=0}^{\infty} G_{n,2-k}(z) (-\ol{s})^n.
$$
Term by term comparison in $(-\ol{s})^n$ yields 
$\xi_k F_{n,k}^{-}(z) = G_{n, k}(z)$ for all $n \ge0$.
\end{proof}

%***************************************************
%
%
% NEW Section 8.3
%
%
%***************************************************

\subsection{Taylor  coefficients of Eisenstein series at $s=0$ are polyharmonic Maass forms}\label{sec:new83}

Now we show  that  these Taylor coefficients are polyharmonic Maass forms.

%%%%%%%%%%%%%%%%%%%%%
%
%
% New Theorem 84  
%
%
%%%%%%%%%%%%%%%%%%%%%
\begin{theorem}\label{thm:new84}
With the Taylor coefficients of $E_k(z,s)$ defined in \eqref{eqn:E_k-coefficients} 
we have:
\begin{enumerate}
\item
For all $n \ge 0$  the  functions $F_{n,k}(z)$ 
(for weights $k \le 0$) and $G_{n,k}(z)$ (for weights $k \ge 2$) are polyharmonic Maass forms
of depth at most $n+1$, i.e. they belong to $V_k^{n+1}(0)$. 
\item
For weights $k \le 0$ and $n \ge 0$ the function $F_{n,k} (z) \in V_{k}^{n+1}(0) \setminus V_{k}^{n}(0)$.
For weights  $k \ge 4$ and $n \ge 0$ the function $G_{n,k}(z) \in V_{k}^{n+1}(0) \setminus V_{k}^n(0)$.
For weight $k=2$ and $n \ge 1$ the function
$G_{n,2}(z) \in V_{2}^{n}(0) \setminus V_{k}^{n-1}(0)$.
\end{enumerate}
\end{theorem}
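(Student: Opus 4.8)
\emph{Proof proposal.} The plan is to derive everything from the single generating identity $\Delta_k \dhtE_k(z,s) = s(s+k-1)\dhtE_k(z,s)$ of Theorem~\ref{th38}(3), the $\xi_k$-identity of Proposition~\ref{pr71}, and the explicit ``bottom of chain'' data in Lemmas~\ref{le27} and~\ref{le37}.

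For (1), I would first note that weight-$k$ modular invariance and moderate growth at the cusp pass to the $s$-Taylor coefficients at $s=0$: for each fixed $s$ the map $z\mapsto\dhtE_k(z,s)$ transforms with weight $k$, and since $\dhtE_k(z,s)$ is entire in $s$ (Theorem~\ref{th38}(2)) the coefficients $F_{n,k}$, $G_{n,k}$ are well defined; the Fourier expansion of Proposition~\ref{prop:FourierExpansionArbitrary} shows that differentiating in $s$ and setting $s=0$ produces, in the constant term, a polynomial in $\log y$ times the fixed powers $y^{0}$ and $y^{1-k}$, while the nonzero-frequency terms are $s$-derivatives of Whittaker $W$-functions and hence decay exponentially, so each coefficient has moderate growth. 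For the polyharmonic condition one iterates Theorem~\ref{th38}(3) to get $(\Delta_k)^{n+1}\dhtE_k(z,s) = s^{n+1}(s+k-1)^{n+1}\dhtE_k(z,s)$, whose $s$-expansion has vanishing coefficients through order $n$; comparing the coefficient of $s^{n}$ on both sides yields $(\Delta_k)^{n+1}F_{n,k}=0$ (resp. $(\Delta_k)^{n+1}G_{n,k}=0$), so $F_{n,k},G_{n,k}\in V_k^{n+1}(0)$.

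For (2) the crux is a non-vanishing statement. Extracting the coefficient of $s^{n}$ from $(\Delta_k)^{n}\dhtE_k(z,s) = s^{n}(s+k-1)^{n}\dhtE_k(z,s)$ gives the clean identities $(\Delta_k)^{n}F_{n,k}(z) = (k-1)^{n}F_{0,k}(z)$ for $k\le 0$ and $(\Delta_k)^{n}G_{n,k}(z) = (k-1)^{n}G_{0,k}(z)$ for $k\ge 4$; since $k$ is even, $(k-1)^{n}\ne 0$, so what is needed is that the bottom coefficients are not identically zero. For $k\ge 4$ this is immediate: $G_{0,k}(z) = \dhtE_k(z,0) = \tfrac{k}{2}\big(\tfrac{k}{2}-1\big)\pi^{-k/2}\Gamma(k)\,E_k(z,0)$ and $E_k(z,0) = \tfrac12\zeta(k)E_k(z)$ is a nonzero holomorphic Eisenstein series, so $G_{0,k}\ne 0$, hence $(\Delta_k)^{n}G_{n,k}\ne 0$ and, with (1), $G_{n,k}\in V_k^{n+1}(0)\setminus V_k^{n}(0)$. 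For $k=0$ we invoke $F_{0,0}(z)=\tfrac12$ from Lemma~\ref{le27}.

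The case $k\le -2$ is the one I expect to be the real obstacle, since there is no holomorphic modular form available to anchor the non-vanishing of $F_{0,k}$; I would handle it by transfer under $\xi_k$. Setting $s=0$ in Proposition~\ref{pr71} gives $\xi_k\big(F_{0,k}(z)\big) = \dhtE_{2-k}(z,0) = G_{0,2-k}(z)$, and since $2-k\ge 4$ the previous paragraph already shows $G_{0,2-k}\not\equiv 0$; hence $F_{0,k}\not\equiv 0$, so $(\Delta_k)^{n}F_{n,k}\ne 0$ and $F_{n,k}\in V_k^{n+1}(0)\setminus V_k^{n}(0)$. Finally, for $k=2$ and $n\ge 1$ the same coefficient extraction, now using $G_{0,2}\equiv 0$, gives $(\Delta_2)^{n}G_{n,2} = G_{0,2}\equiv 0$ (so $G_{n,2}\in V_2^{n}(0)$, sharpening (1)) while $(\Delta_2)^{n-1}G_{n,2} = G_{1,2}$, which is nonzero by the explicit Fourier expansion of Lemma~\ref{le37}; therefore $G_{n,2}\in V_2^{n}(0)\setminus V_2^{n-1}(0)$.
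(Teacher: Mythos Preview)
Your argument is correct and covers all cases. It is close in spirit to the paper's proof but streamlines it in two places worth noting.

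First, for the polyharmonic property and for the non-membership in $V_k^n(0)$, the paper proceeds by induction on $n$ using the two-term recursion $\Delta_k G_{n,k}=(k-1)G_{n-1,k}+G_{n-2,k}$ (Proposition~\ref{lem:xiDelta_arbWeight_Taylor}), whereas you extract the coefficient of $s^n$ directly from the iterated identity $(\Delta_k)^m\dhtE_k(z,s)=s^m(s+k-1)^m\dhtE_k(z,s)$. Your route bypasses the induction and yields the clean formula $(\Delta_k)^n F_{n,k}=(k-1)^n F_{0,k}$ (and its analogues) in one stroke; the paper's recursion, on the other hand, is what it later needs for the modified bases in Section~\ref{sec:new9}, so it is not wasted effort there.

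Second, for the non-vanishing of $F_{0,k}$ when $k\le -2$, the paper reads off the constant term of the Fourier expansion of $\htE_k(z,0)$ in Proposition~\ref{prop:FourierExpansionArbitrary} and checks it is nonzero. Your transfer via $\xi_k F_{0,k}=G_{0,2-k}$ (Proposition~\ref{pr71}) together with the already-established non-vanishing of $G_{0,2-k}$ for $2-k\ge 4$ is cleaner and avoids any Gamma-factor bookkeeping. Both approaches are valid; yours reuses more of the machinery already in place.
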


\begin{proof} 
(1) The $(n+1)$-harmonicity of both $F_{n,k}(z)$ ($k \le 0$) and $G_{n,k}(z)$ ( $k \ge 2$)
follows by induction on $n$ using the recursions (1) and (2) of
Proposition \ref{lem:xiDelta_arbWeight_Taylor}. These recursions
establish the base cases $\Delta_k F_{0,k}(z) =0$, $\Delta_k G_{0,k}(z) =0$ and
$\Delta_k  F_{1,k}(z) = (k-1)F_{0,k}(z)$, $\Delta_k G_{1, k}(z) = (k-1) G_{0,k}(z)$.
The induction step for $n \ge 2$ uses the two-step recursions \eqref{G-recursion} and \eqref{F-recursion}.

The functions $F_{n,k}(z)$ ( resp. $G_{n,k}(z) $) transform as weight $k$ modular forms
as a property inherited from $\dhtE_k(z, s)$.
To show membership of these functions in $V_{k}^{n+1}(0)$ it remains to
show these functions have moderate growth at the cusp. 
We indicate details for $F_{n,k}(z)$, the argument for $G_{n,k}(z)$ being similar. We 
use the $s$-derivative relation 
$$
F_{m,k}(z) = \frac{1}{m!} \frac{\partial^m}{\partial s^m} \dhtE(z, s) \Big|_{s=0}.
$$
We apply it term by term to the Fourier series for $\dhtE(z, s)$ and then set $s=0$ to obtain the Fourier series for $F_{m, k}(z)$.
Applying $\frac{\partial }{\partial s}$ repeatedly to the Fourier coefficients of $\dhtE(z, s)$
reveals
that these coefficients 
involve polynomials in $s$ of degree at most $2m$
times terms of the form $y^{-\frac{k}{2}}\frac{\partial^j}{\partial s^j} W_{\frac{1}{2}\epsilon k, s+ \frac{k-1}{2}}( 4 \pi |n|y) e^{2 \pi i n x}$
times  for $0 \le j \le m-1$ given in Theorem \ref{th41}. 
When we set $s=0$ it  follows that the individual nonconstant Fourier  coefficients $(n \ne 0)$
each  have rapid decay at the cusp, while the constant term has at most polynomial growth in $y$ as $y \to \infty$,
bounded by $y^{k+\epsilon}$.
 Finally uniform decay estimates in $n$ of the $W$-Whittaker function family and 
a bounded number  $s$-derivatives  of them implies  that the full Fourier series of $F_{m, k}(z)$ has moderate growth at the cusp,
compare \cite{LR15S}.

(2) For $k \ne 2$ prove the result on membership in $V_k^{n+1}(0) \smallsetminus V_k^n(0)$ by induction on $n \ge 0$.
By convention  $V_k^{0}(0) = \{0\}$,
and the base case asserts $F_{0, k}(z) \not\equiv 0$ for weight $k \ge 0$ and $G_{0, k}(z) \not\equiv 0$ for weight $k \ge 4$.
The  non-vanishing of
$F_{0, k}(z)$ for weights $k \ge -2$ and of $G_{0, k}(z)$ for weight $k \ge 4$ are equivalent to non-vanishing
of $\dhtE_k(z, 0)$, which follows from non-vanishing of the constant term in
the Fourier series expansion of $\htE(z, 0)$ in Proposition \ref{prop:FourierExpansionArbitrary}, 
and the fact
that $(s+ \frac{k}{2}) (1-(s + \frac{k}{2})|_{s=0} \ne 0$.  The non-vanishing for weight $k=0$ is 
given by Lemma \ref{le27}. For the step $n=1$ we have for $k \le 0$ that
$\Delta_k F_{1, k}(z) = (k-1) F_{0, k}(z) \in V_k^{1}(0) \smallsetminus V_k^0(0)$, which certifies
$F_{1, k}(z) \in V_k^{2}(0) \smallsetminus V_k^{1}(0)$. Similarly for $k \ge 4$ we have 
$\Delta_k G_{1, k}(z) = (k-1) G_{0, k}(z) \in V_k^{1}(0) \smallsetminus V_k^0(0)$, which certifies
$G_{1, k}(z) \in V_k^{2}(0) \smallsetminus V_k^{1}(0)$.
The induction step with $n \ge 2$  is completed for $k \le 0$ using the recursion \eqref{F-recursion}
using $(k-1)F_{n-1, k}(z) \in V_{k}^n(0) \smallsetminus V_{k}^{n-1}(0)$ and $F_{n-2, k}(z) \in V_k^{n-1}(0)$
yielding $(k-1)F_{n-1, k}(z) + F_{n-2, k}(z)  \in V_{k}^n(0) \smallsetminus V_{k}^{n-1}(0)$,
certifying $F_{n,k}(z) \in V_{k}^{n+1}(0) \smallsetminus V_{k}^{n}(0).$ The argument for $k \ge 4$
is similar.

Weight $k=2$ is exceptional, because we have $G_{0,2}(z) \equiv 0$ by Lemma \ref{le37}.
In this case Lemma \ref{le37} gives $G_{1, 2}(z) \not \equiv 0$, 
and we prove $G_{n,k}(z) \in V_{k}^n(0) \smallsetminus V_{k}^{n-1}(0)$
 for $n \ge 1$ by a similar induction, using $G_{1, 2}(z) \not \equiv 0$ as the base case.
\end{proof}

%***************************************************
%
%
% Section 8.4 Polyharmonic Eisenstein Space
%
%
%***************************************************

\subsection{Polyharmonic Eisenstein  space}  \label{sec:new84}

Theorem \ref{thm:new84} motivates the following definition,
which is relevant to Theorem \ref{thm:arbitraryweight}.

%%%%%%%%
% Definition 8.5
%%%%%%%
\begin{definition}
 The {\em polyharmonic Eisenstein space} $E_k^{m}(0)$ is the vector space of weight $k$ polyharmonic
Maass forms of harmonic depth at most $m$ that are spanned by the Taylor series coefficients of
the doubly-completed Eisenstein series $\dhtE_k(z, s)$. 
\end{definition} 

%We reinterpret Theorem \ref{thm:new84} using this definition.
%%%%%%%%
% Corollary 8.6
%%%%%%%

\begin{corollary}\label{cor86}
For each $k \in 2\ZZ$ and each $m \ge 1$ the polyharmonic Eisenstein space $E_k^{m}(0)$
has dimension $m$ and is contained in $V_k^m(0)$. 
For $k \le 0$ a basis   is $\{F_{j,k}(z): \, 0 \le j \le m-1\}.$
For $k=2$ a basis is $\{ G_{j, 2}(z): 1 \le j \le m\}.$
For $k \ge 4$ a basis is $\{G_{j, k}(z); \, 0 \le j \le m-1\}$.
\end{corollary}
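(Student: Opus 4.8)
The plan is to deduce Corollary \ref{cor86} directly from Theorem \ref{thm:new84}, which already does essentially all the work. The only thing left is to upgrade the membership statements into a statement about dimension and a basis.

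First I would record that each of the listed spanning sets consists of polyharmonic Maass forms of harmonic depth at most $m$: for $k \le 0$ the function $F_{j,k}(z)$ lies in $V_k^{j+1}(0) \subseteq V_k^m(0)$ for $0 \le j \le m-1$ by Theorem \ref{thm:new84}(1), and similarly $G_{j,k}(z) \in V_k^{j+1}(0) \subseteq V_k^m(0)$ for $k \ge 4$, while for $k=2$ we have $G_{j,2}(z) \in V_2^{j}(0) \subseteq V_2^m(0)$ for $1 \le j \le m$. By definition of the polyharmonic Eisenstein space $E_k^m(0)$, this space is exactly the span of these Taylor coefficients, so $E_k^m(0) \subseteq V_k^m(0)$ and $E_k^m(0)$ is spanned by the indicated set of $m$ functions.

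Next I would prove linear independence, which is where the ``strict'' part of Theorem \ref{thm:new84}(2) is used. Consider first $k \le 0$. Suppose $\sum_{j=0}^{m-1} c_j F_{j,k}(z) \equiv 0$ with not all $c_j$ zero; let $J$ be the largest index with $c_J \ne 0$. Then $F_J(z) = -c_J^{-1}\sum_{j<J} c_j F_{j,k}(z)$ would lie in the span of $F_{0,k}, \dots, F_{J-1,k}$, each of which lies in $V_k^{J}(0)$ (since $F_{j,k}(z) \in V_k^{j+1}(0) \subseteq V_k^{J}(0)$ for $j \le J-1$). Hence $F_{J,k}(z) \in V_k^{J}(0)$, contradicting Theorem \ref{thm:new84}(2), which says $F_{J,k}(z) \in V_k^{J+1}(0)\smallsetminus V_k^J(0)$. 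So the $F_{j,k}(z)$, $0\le j \le m-1$, are linearly independent. The argument for $k \ge 4$ with the $G_{j,k}(z)$ is identical using the corresponding strict membership $G_{j,k}(z) \in V_k^{j+1}(0)\smallsetminus V_k^j(0)$. For $k=2$ the indexing is shifted: suppose $\sum_{j=1}^m c_j G_{j,2}(z) \equiv 0$ with $J$ the largest index with $c_J \ne 0$; then $G_{J,2}(z)$ would lie in the span of $G_{1,2},\dots,G_{J-1,2} \subseteq V_2^{J-1}(0)$, contradicting $G_{J,2}(z) \in V_2^J(0)\smallsetminus V_2^{J-1}(0)$ from Theorem \ref{thm:new84}(2). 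In all cases we get $m$ linearly independent spanning vectors, so $\dim E_k^m(0) = m$.

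I do not expect a serious obstacle here: the corollary is a formal consequence of Theorem \ref{thm:new84}, and the only subtlety is bookkeeping the off-by-one in the weight-$2$ case, where $G_{0,2}(z)\equiv 0$ forces the basis to start at $j=1$ and the depth of $G_{j,2}(z)$ to be $j$ rather than $j+1$. The containment $E_k^m(0) \subseteq V_k^m(0)$ is immediate from the depth bounds, so no growth or modularity estimates need to be revisited — those are already subsumed in Theorem \ref{thm:new84}(1). Thus the whole proof is the ``largest nonvanishing index'' filtration argument applied three times, once per weight regime.
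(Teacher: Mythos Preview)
Your proof is correct and follows essentially the same approach as the paper's: both deduce the corollary directly from Theorem \ref{thm:new84}, with the linear independence coming from the strict filtration $V_k^{j}(0) \subsetneq V_k^{j+1}(0)$ (resp.\ $V_2^{j-1}(0) \subsetneq V_2^{j}(0)$) established in part (2). The paper's proof is simply terser, saying only that each Taylor coefficient is ``linearly independent of the full set of lower numbered ones,'' which is exactly your largest-nonvanishing-index argument made explicit; one tiny remark is that for $k=2$ the containment $G_{j,2}(z)\in V_2^{j}(0)$ you use comes from part (2) rather than part (1) of Theorem \ref{thm:new84}, so you might cite it explicitly there.
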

\begin{proof} 
Theorem \ref{thm:new84} identifies the  given Taylor coefficient
functions  as being
in the space $E_{k}^m(0)$ and being linearly independent.
At the same time it identifies
each  Taylor coefficient functions as being linearly independent
of the full set of  lower numbered ones. 
\end{proof}
%%%%%%
% Section 9
%%%%%%%

\section{Modified Taylor Coefficient Basis}\label{sec:new9}
%%%%%%%%%%%%%%%%%%%%%
% New Theorem 9.1 [Extracted from old Lemma 5.3]
%%%%%%%%%%%%%%%%%%%%%

We  obtain modified basis functions  of the polyharmonic Eisenstein space $E_k^{m}(0)$
that simplifies the action of $\Delta_k$ and $\xi_k$,
via a  triangular  change of basis  of the Taylor coefficients of $\dhtE_k(z, s)$.

\begin{theorem}\label{thm:76}
{\rm (Modified polyharmonic Eisenstein space basis)}
%{\rm (Taylor Series Recursions for $\dhtE_0$ at  $s_0=0$)}
Let $k \in 2\ZZ$, with  $k \ge 2$.
Let  
$\{ c_{n, k, \ell}: \, n \ge 0; \,1 \le \ell \le n+1\}$ 
be a set of (complex) constants, 
%having $c_{n,k,0}=1$ and $c_{n,k,-1}=0$ for all $ n \ge 0$,
and define for $n \ge 0$ the  set of functions
\begin{eqnarray}
\wt{G}_{n,k} (z) & := & G_{n,k}(z) +\sum_{\ell=1}^n (-1)^{\ell} c_{n, k, \ell}\,G_{n-\ell, k}(z) \label{G-recursion2},\\
\wt{F}_{n, 2-k} (z) & := &(-1)^n \Big( F_{n,k}(z) + \sum_{\ell=1}^{n} \ol{c}_{n, k, \ell} \, F_{n-\ell, 2-k}(z) \Big) \label{F-recursion2},
\end{eqnarray}
in which  $G_{n,k}(z)$ are the Taylor coefficients of $\dhtE_k(z, s)$ and
$F_{n,k}(z)$ are the Taylor coefficients of $\dhtE_{2-k}(z, s)$.
Then for all sets of  constants that obey for all $n \ge 1$ the connection equations:
\begin{equation}\label{rec-zero}
 c_{n, k, \ell} = \frac{1}{k-1} c_{n, k, \ell-1}
+ c_{n-1,k,\ell} , \quad \mbox{for} \quad 0 \le \ell \le n,
\end{equation}
in which we set  $c_{n,k,0}=1$ and $c_{n,k,-1}=0$,
and which involve the free constants $c_{n, k, n+1}$ (for $n \ge 0$) 
these functions will
will simultaneously satisfy:
\begin{enumerate}
\item[(1)] {\rm (``Ramp"  relations)}
\begin{eqnarray}\label{G-to-F2}
\xi_{k} \,\wt{G}_{n, k}(z) \quad &= &( k-1) \,\wt{F}_{n-1, 2-k}(z)    \quad\mbox{for} \quad n \ge 1, \\
\label{F-to-G2}
\xi_{2-k} \wt{F}_{n,2-k}(z) \,&=& \quad \quad \quad \wt{G}_{n, k}(z) \quad \quad\mbox{for} \quad n \ge 1.
\end{eqnarray}
\item[(2)] {\rm (``Tower" relations)}.
\begin{eqnarray}\label{G-to-G2}
\Delta_{k}\ \wt{G}_{n, 2-k}(z) &= &(k-1)\wt{G}_{n-1,k}(z) \quad \quad \mbox{for} \quad n \ge 1,\\
\label{F-to-F2}
\Delta_{2-k} \wt{F}_{n,2-k}(z) & = &(k-1) \wt{F}_{n-1,2-k}(z) \quad \mbox{for} \quad n \ge 1.
\end{eqnarray}
\end{enumerate}
\end{theorem}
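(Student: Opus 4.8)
The plan is to verify that the four target relations \eqref{G-to-F2}--\eqref{F-to-F2} follow mechanically from the basic recursions of Proposition \ref{lem:xiDelta_arbWeight_Taylor}, once we impose the connection equations \eqref{rec-zero} on the constants $c_{n,k,\ell}$. All four identities are linear in the $G_{n,k}$ and $F_{n,k}$, so it suffices to expand the left-hand side using \eqref{G-recursion2} or \eqref{F-recursion2}, push $\Delta_k$ or $\xi_k$ inside the finite sum, apply the appropriate recursion from Proposition \ref{lem:xiDelta_arbWeight_Taylor}, reindex, and match coefficients of each surviving basis function against the claimed right-hand side. Each such match produces a linear relation among the $c_{n,k,\ell}$; the whole point of \eqref{rec-zero} is that it is exactly the relation that makes all four matches succeed simultaneously.

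**Order of steps.** First I would do the ``tower" relation \eqref{F-to-F2}, which is the cleanest: apply $\Delta_{2-k}$ to \eqref{F-recursion2}, use \eqref{F-recursion} (i.e. $\Delta_{2-k}F_{n,k}^{-} = (1-(2-k))F_{n-1,k}^{-}+F_{n-2,k}^{-} = (k-1)F_{n-1,k}^{-}+F_{n-2,k}^{-}$, being careful with the sign $F_{n,k}^{-}=(-1)^nF_{n,k}$), collect terms, and read off that \eqref{F-to-F2} holds iff the $\overline c_{n,k,\ell}$ satisfy a two-term recursion; conjugating gives \eqref{rec-zero} for the $c_{n,k,\ell}$ themselves. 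Second, \eqref{G-to-G2}: same computation using \eqref{G-recursion}, which yields the \emph{same} recursion on the $c_{n,k,\ell}$ (this is the key consistency check — the weight-$k$ tower and the weight-$(2-k)$ tower impose the identical constraint). Third, the ``ramp" relation \eqref{F-to-G2}: apply $\xi_{2-k}$ to \eqref{F-recursion2}, use $\xi_{2-k}F_{n,2-k}^{-}=G_{n,k}$ from \eqref{F-to-G-recursion} (noting $\xi_{2-k}$ is conjugate-linear, so the $\overline c$'s become $c$'s again), and check the result is literally $\wt G_{n,k}(z)$ as defined by \eqref{G-recursion2} — here no new constraint appears, it is automatic given the definitions. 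Fourth, \eqref{G-to-F2}: apply $\xi_k$ to \eqref{G-recursion2}, use \eqref{G-to-F-recursion}, namely $\xi_k G_{n,k}=(k-1)F_{n-1,2-k}^{-}+F_{n-2,2-k}^{-}$, reindex, and verify the output equals $(k-1)\wt F_{n-1,2-k}(z)$; this again reduces to \eqref{rec-zero} after matching coefficients. Finally I would note that $c_{n,k,n+1}$ never appears in any of these matched equations (the highest-index constant only enters through the definition of $\wt G_{n,k}$, not through any recursion), so it is genuinely free, and that \eqref{rec-zero} with the stated initial data $c_{n,k,0}=1$, $c_{n,k,-1}=0$ determines all remaining $c_{n,k,\ell}$ recursively — one checks the displayed closed form $c_{n,k,\ell}=\frac{1}{(k-1)^\ell}\binom{n+\ell}{n}$ of Theorem \ref{thm:arbitrary-ramp} solves it, though that verification belongs to the later theorem.

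**Main obstacle.** The computations are elementary but the bookkeeping is delicate, and the genuine content is the \emph{compatibility}: there are four relations but essentially one family of constants, so one must confirm that the constraint extracted from \eqref{F-to-F2}, the one from \eqref{G-to-G2}, and the one from \eqref{G-to-F2} are the \emph{same} recursion \eqref{rec-zero}, while \eqref{F-to-G2} is constraint-free. The place where this is most likely to go wrong is sign and index management: the alternating $(-1)^\ell$ inside \eqref{G-recursion2} versus the plain $\overline c_{n,k,\ell}$ inside \eqref{F-recursion2}, the global $(-1)^n$ in \eqref{F-recursion2}, the substitution $F^{-}_{n,k}=(-1)^nF_{n,k}$ used to pass between \eqref{F-recursion} and the untwisted coefficients, and the conjugation introduced each time $\xi_k$ acts (which sends $c\mapsto\overline c$ and, combined with evaluation at real $s=0$, must land back on real statements). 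I would set up a single reindexing lemma — ``if $\Delta$ or $\xi$ applied to $\sum_\ell a_\ell (\text{coeff})_{n-\ell}$ produces $\sum_\ell a_\ell\big((k-1)(\text{coeff})_{n-\ell-1}+(\text{coeff})_{n-\ell-2}\big)$, then after shifting the index the coefficient of $(\text{coeff})_{n-1-j}$ is $(k-1)a_j + a_{j-1}$" — and apply it uniformly to all four cases, so the sign and index work is done once rather than four times.
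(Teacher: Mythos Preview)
Your proposal is correct and uses the same underlying mechanism as the paper: expand $\wt{G}_{n,k}$ and $\wt{F}_{n,2-k}$ from their definitions, push the operator through using the recursions of Proposition~\ref{lem:xiDelta_arbWeight_Taylor}, reindex, and match coefficients against \eqref{rec-zero}. You also correctly observe that \eqref{F-to-G2} is automatic from the definitions (via \eqref{F-to-G-recursion} and conjugate-linearity of $\xi_{2-k}$) and imposes no constraint, while \eqref{G-to-F2} reduces to \eqref{rec-zero}.

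The one economy you miss is that the paper does not verify the tower relations at all: since $\Delta_k = \xi_{2-k}\xi_k$ and $\Delta_{2-k} = \xi_k\xi_{2-k}$ (Lemma~\ref{lem:DeltaFactor}), the relations \eqref{G-to-G2} and \eqref{F-to-F2} follow immediately by composing the two ramp relations \eqref{G-to-F2} and \eqref{F-to-G2}. This halves the work and, more importantly, dissolves what you call the ``main obstacle'': the compatibility you worry about --- that all four relations impose the \emph{same} constraint \eqref{rec-zero} --- is then automatic rather than something requiring separate verification, since the tower relations are literally compositions of ramp relations already established. Your plan of checking all four directly still succeeds, but the sign-and-index bookkeeping you anticipate for \eqref{G-to-G2} and \eqref{F-to-F2} is avoidable.
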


\begin{remark}
The connection equations  have infinitely many solutions, since may
 specify arbitrarily the values of $c_{n, k, n+1}$ for all $n \ge 0$.
\end{remark}

\begin{proof}
Suppose $k \ge 2$.
It suffices to verify the ``ramp" relations (1) hold, since the
``tower relations" (2) then follow using $\Delta_k = \xi_k \xi_{2-k}$
and $\Delta{2-k}= \xi_{2-k} \xi_k$. 
Set $c_{n, k, 0}=1$ and $c_{n, k, -1} =0$ for all $n  \ge 0$.
By Proposition \ref{lem:xiDelta_arbWeight_Taylor} (3) we obtain
\begin{eqnarray*}
\xi_k \wt{G}_{n,k}(z) &=& \xi_k \Big( \sum_{\ell =0}^n (-1)^{\ell} c_{n,k,\ell} G_{n-\ell, k}(z)\Big)\\
&=& \sum_{\ell=0}^n (-1)^{\ell} \ol{c}_{n, k, \ell} \Big( (k-1)(-1)^{n -\ell -1} F_{n-\ell-1, 2-k}(z) + (-1)^{n - \ell -2} F_{n-\ell -2, 2-k}(z) \Big)\\
&=& (-1)^{n-1} \sum_{\ell=0}^n \ol{c}_{n, k,\ell} \Big( (k-1) F_{n -\ell -1, 2-k}(z) - F_{n-\ell-2, 2-k}(z) \Big)\\
&=& (-1)^{n-1} (k-1) \big(\sum_{\ell=0}^{n-1} F_{n -1 -\ell, 2-k}(z) (\ol{c}_{n,k, \ell} - \frac{1}{k-1} \ol{c}_{n, k, \ell-1} \Big).
\end{eqnarray*}
The connection equations, after complex conjugation, state
$$
\ol{c}_{n, k, \ell} = \frac{1}{k-1} \ol{c}_{n, k, \ell-1} + \ol{c}_{n-1, k, \ell}.
$$
Substituting these in the last equation yields
%\begin{eqnarray*}
$$
\xi_k \wt{G}_{n,k}(z) = (-1)^{n-1} (k-1) \sum_{\ell=0}^{n-1} F_{n-1-\ell, 2-k}(z) \ol{c}_{n-1, k , \ell} =(k-1) \wt{F}_{n-1, 2-k}(z),
$$
%\end{eqnarray*}
which verifies one  ``ramp" relation.

Using next Proposition \ref{lem:xiDelta_arbWeight_Taylor} (4) we obtain
\begin{eqnarray*}
\xi_k \wt{F}_{n,2-k}(z) &=& \xi_{2-k} \Big( (-1)^n\sum_{\ell =0}^n  \ol{c}_{n,k,\ell} F_{n-\ell, 2-k}(z)\Big)
= (-1)^n \sum_{\ell=0}^n  c_{n, k, \ell} \,\xi_{2-k}(F_{n-\ell, 2-k}(z) )\\
&=& (-1)^{n} \sum_{\ell=0}^n c_{n, k,\ell} (-1)^{n-\ell} G_{n-\ell, k}(z) 
= \big(\sum_{\ell=0}^{n} c_{n,k, \ell} (-1)^{\ell} G_{n-\ell, k}(z) 
= \wt{G}_{n,k}(z),
\end{eqnarray*}
which verifies  the other ``ramp" relation.
\end{proof}

\begin{remark}
There is freedom of choice in choosing 
the boundary values $c_{n, k, n+1}$ for
$n \ge 0.$
We call the choice $c_{n, j, n+1} =0$ for all $n$ the
 {\em zero boundary conditions.} Table \ref{tab71} presents
 value for the case of weight $k=2$.
 The main diagonal in this table are   Catalan numbers $c_{n,2,n} =\frac{1}{n+1} {{2n}\choose{n}},$
 cf. Stanley \cite{Stan2}, \cite{Stan15}.
One   gets another  elegant solution with the choice $c_{n, k, n+1} = {2n+1 \choose n+1}$
which corresponds to  $c_{n,k,n}= {2n \choose n}$;
we call these {\em binomial boundary conditions}.
In the binomial  case one obtains
$c_{n, k, \ell} = \frac{1}{(k-1)^{\ell}} {n+ \ell \choose n},$
as we will check  when proving Theorem \ref{thm:arbitraryweight}.
Table \ref{tab72} below gives values for weight $k=2$.

\end{remark}

%%%%%%%%%%%%%%%%%%%%%%%%%%%%%%%%%%%%%%%%%%%%%%%%%%%%%%%%%%%%%%%%%%%%%%%
% TABLE 1
%%%%%%%%%%%%%%%%%%%%%%%%%%%%%%%%%%%%%%%%%%%%%%%%%%%%%%%%%%%%%%%%%%%%%%%

\begin{table}[h]\centering
\renewcommand{\arraystretch}{.85}
\begin{tabular}{|r|r|r|r|r|r|r|r|r|r|}
\hline
\multicolumn{1}{|c|}{$n \backslash \ell$} &
\multicolumn{1}{c|}{$0$} &
\multicolumn{1}{c|}{$1$} &
\multicolumn{1}{c|}{$2$}&
\multicolumn{1}{c|}{$3$} &
\multicolumn{1}{c|}{$4$} &
\multicolumn{1}{c|}{$5$} &
\multicolumn{1}{c|}{$6$} &
\multicolumn{1}{c|}{$7$}  &
\multicolumn{1}{c|}{$8$} 
\\ \hline
0 &          1 &    0 &       &             &         &       &       &         &     \\ \hline
 1 &         1 &    1 &     0&            &         &       &       &           &   \\ \hline
 2 &         1 &    2 &    2&        0    &         &       &       &          &      \\  \hline 
 3 &         1&    3 &    5&        5   &     0    &       &       &           &      \\ \hline
 4 &        1&    4 &     9 &     14    &     14  &    0  &       &             &       \\ \hline
 5 &        1 &    5&    14&    28     &     42   &    42   &   0   &          &     \\ \hline
 6 &        1 &    6&    20&    48    &       90    &    132  &    132  &    0      &        \\ \hline
 7 &       1 &    7&      27&    75  &   165      &   297  &      429  &   429   &    0           \\ \hline
 
\end{tabular}
\medskip
\caption{Values of $c_{n, 2, \ell}$, with $0$ boundary conditions}
\label{tab71}
\end{table}

%%%%%%%%%%%%%%%%%%%%%%%%%%%%%%%%%%%%%%%%%%%%%%%%%%%%%%%%%%%%%%%%%%%%%%%
% TABLE 2
%%%%%%%%%%%%%%%%%%%%%%%%%%%%%%%%%%%%%%%%%%%%%%%%%%%%%%%%%%%%%%%%%%%%%%%
\begin{table}[h]\centering
\renewcommand{\arraystretch}{.85}
\begin{tabular}{|r|r|r|r|r|r|r|r|r|r|}
\hline
\multicolumn{1}{|c|}{$n\backslash \ell$} &
\multicolumn{1}{c|}{$0$} &
\multicolumn{1}{c|}{$1$} &
\multicolumn{1}{c|}{$2$}&
\multicolumn{1}{c|}{$3$} &
\multicolumn{1}{c|}{$4$} &
\multicolumn{1}{c|}{$5$} &
\multicolumn{1}{c|}{$6$} &
\multicolumn{1}{c|}{$7$} &
\multicolumn{1}{c|}{$8$} 
\\ \hline
0 &          1 &    1 &       &             &         &       &       &        &    \\ \hline
 1 &         1 &    2 &     3&            &         &       &       &          &   \\ \hline
 2 &         1 &    3 &    6&        10    &         &       &       &           &     \\ \hline
 3 &         1&    4 &    10 &        20    &     35    &       &       &        &       \\ \hline
 4 &        1&    5 &     15 &     35    &     70    &    126  &       &        &              \\ \hline
 5 &        1 &    6 &    21&    56     &     126    &    252   &   462    &         &       \\ \hline
 6 &        1 &    7 &    28&   84    &     210    &    462   &    924   &    1716      &         \\ \hline
 7 &       1 &    8&       36&    120   &   330      &   792   &    1716   &   3432  &    6435           \\ \hline
 
\end{tabular}
\medskip
\caption{Values of $c_{n, 2, \ell}$, with ${2n+1 \choose n+1}$ boundary conditions}
\label{tab72}
\end{table}

\newpage
%%%%%%%%%%%%%%%%%
%%                                          %%
%%       Section 10     PROOFS              %%
%%                                          %%
%%%%%%%%%%%%%%%%%
\section{Proofs of Theorems \ref{thm:wt0VectorSpace}, \ref{thm:wt2VectorSpace}, and \ref{thm:ladder}}\label{sec:Ladder}\label{sec:Proofs}

In this section we prove the three main theorems of Section \ref{sec:Intro}. 

%***************************************************
%
% Proposition 10.1 [Revised  proof/ split into two propositions for k=0, 2 separately]
%
%***************************************************
 
\begin{proposition}\label{prop:dimensionBound1}
For weight $k=0$ the  space $V_0^{1/2}(0)$  is one dimensional
and consists of constants.
The spaces satisfy for all $n\ge 0$ the equality
$$V_0^{n+\frac{1}{2}}(0)= V_0^{n+1}(0).$$

For integer $n \ge 1$ the space $V_0^{n}(0)$ is at  most $n$ dimensional. 
\end{proposition}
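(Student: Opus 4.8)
The plan is to deduce all three assertions from two inputs: the vanishing $M_2=\{0\}$ (Theorem \ref{thm:classicalDimension}, since $2\equiv 2\pmod{12}$ and $[2/12]=0$), and the identification $V_0^{1/2}(0)=M_0$, which is the one-dimensional space of constants (by the dimension formula $\dim M_0=[0/12]+1=1$, or directly: a weight $0$ holomorphic modular form of moderate growth extends holomorphically to the compact quotient $X(1)$, hence is constant). The first genuine step is to prove $V_0^{1}(0)=V_0^{1/2}(0)$. Given $f\in V_0^1(0)$, Lemma \ref{lem:subspaceXi} puts $\xi_0 f$ in $V_2^1(0)$, and the factorization $\Delta_0=\xi_2\xi_0$ from Lemma \ref{lem:DeltaFactor} gives $\xi_2(\xi_0 f)=\Delta_0 f=0$. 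Since $\xi_2 g=0$ forces $g$ to be holomorphic, $\xi_0 f$ is then a holomorphic weight $2$ modular form of moderate growth, i.e.\ an element of $M_2=\{0\}$; hence $\xi_0 f=0$, so $f$ is holomorphic of weight $0$ and moderate growth, and therefore constant. This already yields the first two sentences of the proposition.

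Next I would use that the composition $\Delta_0=\xi_2\xi_0$ maps $V_0^m(0)$ into $V_0^{m-1}(0)$: each application of $\xi$ preserves moderate growth by Lemma \ref{lem:subspaceXi}, so $\Delta_0 f\in V_0^m(0)$, while $\Delta_0^{\,m-1}(\Delta_0 f)=\Delta_0^{\,m}f=0$. Consequently, for $f\in V_0^{n+1}(0)$ the form $\Delta_0^{\,n}f$ lies in $V_0^1(0)$, hence is a constant, hence a holomorphic modular form; by the very definition of half-integer harmonic depth this says $f\in V_0^{n+1/2}(0)$. Conversely, if $\Delta_0^{\,n}f$ is a holomorphic weight $0$ modular form it is constant, so $\Delta_0^{\,n+1}f=0$ and $f\in V_0^{n+1}(0)$. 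This establishes $V_0^{n+1/2}(0)=V_0^{n+1}(0)$ for every $n\ge 0$. For the dimension bound I would induct on $n\ge 1$, the base case being $\dim V_0^1(0)=1$ from the first step. In the inductive step the restriction $\Delta_0\colon V_0^{n+1}(0)\to V_0^n(0)$ has kernel $\{f\in V_0^{n+1}(0):\Delta_0 f=0\}=V_0^1(0)$, of dimension $1$, so the rank--nullity theorem gives $\dim V_0^{n+1}(0)\le 1+\dim V_0^n(0)\le n+1$.

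The point that most needs care --- the only thing I would call an obstacle --- is the moderate-growth bookkeeping behind the two steps above: one must be certain that $\xi_0 f$ and $\Delta_0 f$ genuinely inherit moderate growth from $f$, since otherwise $\xi_0 f$ would only be known to be a holomorphic form possibly having linear-exponential growth at the cusp (of which there is an infinite-dimensional space $M_2^{!}$), and $\Delta_0$ might fail to carry $V_0^{n+1}(0)$ into $V_0^n(0)$. Both facts are exactly what Lemma \ref{lem:subspaceXi} supplies, applied once or twice, but it is worth making this explicit. For the base case $V_0^1(0)=\CC$ there is also the option of arguing directly from the Fourier expansion of Lemma \ref{lem:FourierExp1Harmonic} with $k=0$, where every term except the constant $a_0$ either decays or is the linear term $b_0 y$, and weight $0$ invariance then forces all remaining coefficients to vanish; but the $\xi$-operator route above is cleaner and is the one I would present.
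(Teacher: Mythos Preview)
Your proof is correct and follows essentially the same strategy as the paper: reduce to $V_0^1(0)=M_0$ via $\xi_0 f\in M_2=\{0\}$, then bootstrap the equality $V_0^{n+1/2}(0)=V_0^{n+1}(0)$ and the dimension bound from this base case. Your execution is slightly cleaner in two spots---you deduce that $\xi_0 f$ is holomorphic directly from $\xi_2(\xi_0 f)=\Delta_0 f=0$ rather than computing its Fourier expansion explicitly, and you invoke rank--nullity for the dimension step rather than the paper's hands-on linear-dependence argument---but these are cosmetic differences, not a different route.
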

\begin{proof}
We consider the case $k=0$. The space $V_0^{1/2}(0)=M_0$ is the space of weight
$0$ holomorphic modular forms, which  has dimension $1$ and
is spanned by the constant functions (Theorem \ref{thm:classicalDimension}).
We next check that the space $V_0^1(0)$ is also generated by the constant functions,
so that $V_0^{1} (0)= V_0^{1/2}(0).$ 
First, we observe that
every  $f(z) \in V_0^1(0)$ has a Fourier expansion of the form 
$$
f(z) = \sum_{n =1}^{\infty} b_{-n} e^{-2\pi i n \ol{z}} + (b_0 y + a_0) + 
\sum_{n = 1}^{\infty} a_n e^{2\pi i n z}  
$$
for some sequences of complex numbers $a_n$ and $b_n$.
This fact follows starting from the Fourier series
given in Lemma \ref{lem:FourierExp1Harmonic},
 observing that 
$\Gamma(1, y) =\int_y^{\infty} e^{-t}dt = e^{-y}$
whence 
$$
\Gamma(1, 4 \pi |n| y) e^{2 \pi i nz} =
\Gamma(1, 4 \pi |n|y) e^{-2\pi n y} e^{2\pi i n x} = e^{2 \pi n y} e^{2 \pi i n x} = e^{2 \pi in \ol{z}} \quad 
\mbox{for} \quad  n \le -1.
$$

Applying $\xi_0$ to this form,
by Lemma \ref{lem:subspaceXi} we obtain $\xi_0f(z) \in V_2^1(0)$, 
and  computation of $\xi_0$ on the Fourier expansion above yields
$$ 
\xi_0 f(z) =\overline{b_0} +   \big( \sum_{n = 1}^{\infty} \overline{b_{-n}} (4 \pi n) e^{2\pi i n z}\big).
$$
This Fourier expansion certifies that $\xi_0(f(z))$ is a weight $2$
holomorphic modular  form for $\SL(2, \Z)$. There are no nonzero 
holomorphic modular  forms by a classical result
(Theorem \ref{thm:classicalDimension}), whence $\xi_0f(z) =0$ and
$\xi_0\(V_0^1(0)\) = \{0\}$.
It follows that 
$f(z) = \sum_{n = 0}^{\infty} a_n e^{2\pi i n z}$
is a holomorphic modular form, so
 $f(z) \in V_0^{1/2}(0)$, and  
 $V_0^{1/2}(0)= V_0^1(0)$.

We next show for all $n \ge 1$ the equality of vector spaces
$$
V_0^{n+\frac{1}{2}}(0) = V_0^{n+1}(0).
$$
Clearly $V_0^{n+1/2}(0) \subset V_0^{n+1}(0)$.
Suppose for a contradiction that  the inclusion is strict, so there exists  $f(z) \in V_0^{n+1}(0)$
and $f(z) \not\in V_0^{n+1/2}(0)$. Then 
 $g(z) := \Delta^n(f)(z) \in V_0^{1}(0)$ but $g(z) \not\in V_0^{1/2}(0)$,
 which contradicts $V_0^{1/2} (0)= V_0^{1}(0)$. The equality follows.
 
 We now show $\dim( V_0^{n+1}(0)) \le \dim(V_0^{n}(0)) +1$ by induction on
 $n \ge 0$. The base case $n=0$ is established. 
 We prove the induction step by contraction. Let $\dim(V_0^n(0)) =m$
 and suppose to the contrary that $\dim(V_0^{n+1}(0)) \ge m+2$.
 Since the constants are in $V_0^1(0)$ we may choose $m+2$
 linearly independent functions $f_0(z), ..., f_{m+1}(z)$ in $V_0^{n+1}(0)$
  with $f_0(z) =1$
 being constant. Now $\{ \Delta_0(f_i)(z) : \, 1 \le i \le m+1\}$ all lie in
 $V_0^n(0)$ 
 so must be linearly dependent, say
 $\sum_{i=1}^m \alpha_ i \Delta_0(f_i) =0$. Thus
 $g(z) := \sum_{i=1}^m \alpha_i f_i(z)  $ satisfies
 $\Delta_0( g(z) ) = \sum_{i=1}^n \alpha_i \Delta_0(f_i) =0$
 whence $g(z) \in V_0^1(0)$. This forces $g(z)=\alpha_0 $ to be constant,
 which gives a nontrivial linear relation
 $ \sum_{i=1}^{m+1} \alpha_i f_i(z) = \alpha_0 f_0(z)$
 contradicting our assumption of linear independence.
 The induction step is proved.
\end{proof}

%***************************************************
%
% Proposition 10.2 [Revised  proof/ split into two propositions for k=0, 2 separately]
%
%***************************************************
 
\begin{proposition}\label{prop:dimensionBound2}
For weight $k=2$ the  space $V_2^{\frac{1}{2}}(0)$  is 
trivial and the space $V_2^1(0)$ is one dimensional,
spanned by $\htE_2(z, 0)$. 
The spaces satisfy for all $n\ge 0$ the equality
$$V_2^{n}(0)=  V_2^{n+\frac{1}{2}}(0).$$
For integer $n \ge 0$ the space $V_2^{n}(0)$ is at  most $n$ dimensional. 
\end{proposition}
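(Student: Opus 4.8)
The plan is to follow the pattern of the proof of Proposition \ref{prop:dimensionBound1}, exploiting the fact that there are no nonzero holomorphic modular forms of weight $2$, which makes the half-integer bookkeeping collapse.

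First I would record the base facts. By Theorem \ref{thm:classicalDimension} we have $M_2 = \{0\}$, so $V_2^{1/2}(0) = M_2 = \{0\}$. This also disposes of all the half-integer equalities at once: for $n \ge 0$, a form $f$ of moderate growth lies in $V_2^{n+1/2}(0)$ exactly when $\Delta_2^n f \in M_2 = \{0\}$, i.e. exactly when $\Delta_2^n f = 0$, i.e. exactly when $f \in V_2^n(0)$. Hence $V_2^n(0) = V_2^{n+1/2}(0)$ for every $n \ge 0$ (for $n=0$ this reads $\{0\} = M_2$).

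Next I would pin down $V_2^1(0)$. On the one hand, by Lemma \ref{le37} the first Taylor coefficient of $\dhtE_2(z,s)$ at $s=0$ is $G_1(z) = -\htE_2(z,0)$, and by Theorem \ref{thm:new84}(2) (the case $k=2$, $n=1$) this function lies in $V_2^1(0) \setminus V_2^0(0)$; in particular $\htE_2(z,0)$ is a nonzero element of $V_2^1(0)$. On the other hand, for any $f \in V_2^1(0)$, Lemma \ref{lem:subspaceXi} gives $\xi_2 f \in V_0^1(0)$, which by Proposition \ref{prop:dimensionBound1} is the one-dimensional space of constants. If $\xi_2 f = 0$ then $\frac{\partial f}{\partial \bar z} = 0$, so $f$ is a holomorphic function transforming with weight $2$ and of moderate growth, hence $f \in M_2 = \{0\}$. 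Thus $\xi_2$ restricted to $V_2^1(0)$ is injective; since an injective conjugate-linear map between complex vector spaces still bounds the complex dimension of its source by that of its target, we get $\dim V_2^1(0) \le \dim V_0^1(0) = 1$. Together with the nonzero element $\htE_2(z,0)$ this shows $V_2^1(0)$ is exactly one-dimensional, spanned by $\htE_2(z,0)$.

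Finally I would prove $\dim V_2^n(0) \le n$ by induction on $n$, the base case $n=0$ being $V_2^0(0) = \{0\}$ by convention. For the induction step with $n \ge 1$, note that $\Delta_2 = \xi_0 \xi_2$ preserves moderate growth by Lemma \ref{lem:subspaceXi} (applied twice), so $\Delta_2$ maps $V_2^n(0)$ into $V_2^{n-1}(0)$; its kernel on $V_2^n(0)$ equals $\{ f \in V_2^n(0) : \Delta_2 f = 0 \} = V_2^1(0)$, which is one-dimensional by the previous step. Rank--nullity then gives $\dim V_2^n(0) \le 1 + \dim V_2^{n-1}(0) \le 1 + (n-1) = n$. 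No genuine obstacle is expected; the only points calling for a little care are the interplay between the conjugate-linearity of $\xi_2$ and the complex-dimension count, and the discipline of invoking Proposition \ref{prop:dimensionBound1}, Lemma \ref{le37} and Theorem \ref{thm:new84} rather than the not-yet-proved Theorem \ref{thm:wt2VectorSpace}.
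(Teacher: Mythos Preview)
Your proof is correct and follows essentially the same strategy as the paper's: triviality of $M_2$, the $\xi_2$-map into $V_0^1(0)$ to bound $\dim V_2^1(0)$, and an induction for the general dimension bound. The differences are cosmetic---you deduce holomorphicity of $\ker\xi_2$ directly from $\partial f/\partial\bar z=0$ rather than via the Fourier expansion, you invoke Theorem~\ref{thm:new84} instead of Theorem~\ref{th38}(3) to produce the nonzero element, and you phrase the induction via rank--nullity rather than by contradiction---but the content is the same.
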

\begin{proof}
It is a classical result that the  space $V_2^{1/2}(0) = M_2$ of holomorphic weight $2$
forms is trivial.  (Theorem \ref{thm:classicalDimension}).
The space $V_2^{1}(0)$ has dimension at least $1$ since 
$\Delta_2(\htE_2(z, 0))=0$ by Theorem \ref{th38}(3).
Furthermore  $\xi_2( \htE_2(z, 0)) = \frac{1}{2}$ is  a constant function\footnote{
The constant can be found using Proposition \ref{pr71} as $s \to 0$ in
$\xi_2( \htE_2(z, s))=(\overline{s} +1) (\overline{s} \htE_0(z, - \overline{s})),$
using the simple pole of $\htE_0(z, s)$ at $s=0$ with residue $-\frac{1}{2}$.}.
%%%%%%%%%%%%%%%%%%%%%%%%%%%%%%%%%%%%%%%%%%%
%The constant is $\pm \frac{1}{2}$ but is the sign right?  We think so but
%The disagreement with Duke-Imamoglu-Toth  paper page 2 worrisome.
%%%%%%%%%%%%%%%%%%%%%%%%%%%%%%%%%%%%%%%

Now  $\xi_2(V_2^1(0)) \subset V_0^1(0)$
by Lemma \ref{lem:subspaceXi}, and $V_0^1(0)$ consists of constant
functions.
If $ \xi_2(V_2^1(0))$ were of dimension $2$ or greater, then
its image under $\xi_2$ would have a linear dependence, which
we could use to produce a nonzero function $f(z) \in V_2^{1}(0)$
with $g(z) =\xi_2 f(z) =0$.  By Lemma \ref{lem:FourierExp1Harmonic}
every  $f(z) \in V_2^1(0)$ has a Fourier expansion of the form 
$$
f(z) =  \sum_{n =1}^{\infty} b_{-n} \Gamma(-1, 4 \pi |n|y) e^{-2\pi i n z} +
( \frac{b_0}{y} + a_0) + \sum_{n = 1}^{\infty} a_n e^{2\pi i n z} .
$$
for some sequences of complex numbers $a_n$ and $b_n$. 
A calculation using Lemma \ref{lem:FourierExp1Harmonic} 
and Lemma \ref{le54} (1) shows
that $g(z) = \xi_2 f(z)$ has Fourier expansion
$$
g(z) = \xi_2 f(z)= - \sum_{n=1}^{\infty}\frac{ \overline{b_{-n}} }{4 \pi n} e^{2\pi i n z}=0.
$$
We conclude that all $b_n=0$, which says that
$f(z)= \sum_{n = 0}^{\infty}  a_n e^{2\pi i n z}$ 
is a weight $2$ holomorphic modular form, so $f(z)=0$. 
Thus $\dim( V_2^1(0)) =1$ and it is generated by any nonzero scalar
multiple of $\htE_2(z, 0)$.

We next show for all $n \ge 1$ the equality of vector spaces
$$
V_2^{n}(0) = V_2^{n+\frac{1}{2}}(0).
$$
Clearly $V_2^{n}(0) \subset V_2^{n+1/2}(0)$.
Suppose for a contradiction that  the inclusion is strict, so there exists  $f(z) \in V_2^{n+1/2}(0)$
and $f(z) \not\in V_2^{n}(0)$. Then 
 $g(z) := \Delta^n(f)(z) \in V_2^{1/2}(0)$ is nonzero, otherwise it is in $V_2^n(0)$,
 which contradicts the fact that $V_2^{1/2}(0)$ is $\{0\}$.
  The equality follows.
 
 Finally one may show by induction on $n$ that
  $\dim( V_2^{n+1}(0)) \le \dim(V_2^{n}(0)) +1$ holds
  for all $n \ge 0$ exactly as in Proposition \ref{prop:dimensionBound1}.
\end{proof}

%***************************************************
%
% Proof of Theorems 1.1 and 1.2
%
%***************************************************

\begin{proof}[Proof of Theorems \ref{thm:wt0VectorSpace} and \ref{thm:wt2VectorSpace}]
An upper bound $\dim(V_k^n(0)) \le n$
 for $k=0$ follows from Proposition \ref{prop:dimensionBound1},
 which shows also $V_0^{n}(0)= V_0^{n-\frac{1}{2}}(0).$
An upper bound $\dim(V_k^n(0)) \le n$
for $k=2$
follows from Proposition \ref{prop:dimensionBound2},
which shows $V_2^n (0)= V_2^{n+\frac{1}{2}}(0)$
It remains to show a  matching lower bound  $\dim(V_k^n(0) ) \ge n$
for $k=0, 2$ and to give a basis. 
 The Eisenstein space  $E_k^n(0) \subset V_k^n(0)$ for all $k \in 2\ZZ$,
 so the required lower bound follows from the assertion $\dim(E_k^n(0)) = n$,
 given in Corollary \ref{cor86}, which shows $V_k^n(0)= E_k^n(0)$. 
 Corollary \ref{cor86} now supplies the asserted bases
 for $V_k^n(0)$ for $k=0, 2$.
  \end{proof}

%***************************************************
%
% Proof of Theorem 1.3
%
%***************************************************

\begin{proof}[Proof of Theorem \ref{thm:ladder}]
We  apply Theorem \ref{thm:76} with $k=2$.
We need only to prove that the particular choice
$$
c_{n,2, \ell} = {n+\ell \choose \ell}
$$
satisfies the hypotheses of that result.
We have
$ c_{n,2,0} = {n \choose 0} =1$ and $c_{n, 2, -1} = {n-1 \choose -1} =0$.
Since $k=2$ the  connection equation \eqref{rec-zero} asserts
$c_{n, 2, \ell}= c_{n, 2,\ell-1} + c_{n-1, 2, \ell}$
which is exactly the binomial coefficient recursion
$$
{n+\ell \choose \ell} = {n-1 \choose \ell -1} + {n-1 \choose \ell}.
$$
The result follows. In this case  the other boundary condition states
$c_{n, 2, n+1} = {2n+1 \choose n+1}.$
\end{proof}

%%%%%%%%%%%%%%%%%%%%%%%%%%%%%%%%%%%%%%%%%%%%%%%%%%
%%                                                              
%%         Section 11:    PROOF of Theorem 2.2    and Theorem 2.3       
%%                                                     
%%%%%%%%%%%%%%%%%%%%%%%%%%%%%%%%%%%%%%%%%%%%%%%%%%%
\section{Proof of Theorem \ref{thm:arbitraryweight} and Theorem \ref{thm:arbitrary-ramp}}\label{sec:Proofs2}

It remains to  treat the arbitrary even integer weight case, excluding $k=0$ or $2$.
The proof of Theorem \ref{thm:arbitraryweight} is similar to that of Theorems \ref{thm:wt0VectorSpace}, \ref{thm:wt2VectorSpace}, and \ref{thm:ladder}. 

%***************************************************
%
% Proof of Theorem 2.2
%
%***************************************************

\begin{proof}[Proof of Theorem \ref{thm:arbitraryweight}]
In following proof we let the weight parameter  $k \ge 4$, and the 
case of weights $ \le -2$ are represented as   $2-k$.

We first establish assertion (2) for harmonic depth $m=1$.
Suppose we are given a  weight $k \ge 4$
form $f(z) \in V_k^1(0)$. It  has a Fourier expansion of the form 
$$
f(z) =  \sum_{n=1}^{\infty} b_{-n} \Gamma(1-k, 4\pi \abs{n} y) e^{-2\pi i n z} +
( b_0 y^{1-k} + a_0) + \sum_{n=1}^{\infty} a_n e^{2\pi i n z}, 
$$
see Lemma \ref{lem:FourierExp1Harmonic}. 
%\cite{BOR98}, for example, 
Now $\xi_kf (z) \in V_{2-k}^1(0)$ by Lemma \ref{lem:subspaceXi},
and a calculation as in Lemma \ref{prop:liftOfCuspForms} we have
$$
\xi_k f(z) = (1-k) \,\overline{b_0}
-\sum_{n=1}^{\infty}   \overline{b_{-n}} (4 \pi n)^{k-1} e^{2\pi i n z}.
$$
Thus $\xi_k f(z)$ is a  holomorphic modular form, and by  Theorem
 \ref{thm:classicalDimension} it is identically $0$.  It follows that  its Fourier
 series has  $b_n=0$ for all $n \ge 0$,
 and we conclude that 
 $f(z)$ is itself holomorphic. Therefore $V_k^1(0) = V_k^{\frac{1}{2}}(0) = M_k.$
  Now by  Proposition \ref{prop:classicalFreelyGenerated} 
$ M_k= E_k^1(0) + S_k$,
where $E_k^1(0)$ contains the weight holomorphic $k$ Eisenstein series, which by
\eqref{holo-ES}
which is one dimensional.
Moreover, $S_k$ is the space of weight $k$ cusp forms. 
This establishes the assertion  (2)  for $m=1$. i.e. for $V_k^1(0)$ for $k \ge 4$.

Next we establish the assertion (1) for $V_{2-k}^1(0)$ for harmonic depth $m=1$.  
Let $f(z)\in V_{2-k}^1(0)$. Then 
$\xi_{2-k} f (z)\in V_k^1(0) = E_k^1(0) + S_k$,
whence  we have
$$
\xi_{2-k} f(z) = c_E E_k(z,0) + g(z)
$$
 where $g(z) \in S_k$ is a cusp form.
 Proposition \ref{pr71} shows that for weights $2-k \le 0$
that the function  $F_{0, 2-k}(z)= \dhtE_{2-k}(z, 0)$ has
$$
\xi_{2-k}F_{0, 2-k}(z) = \xi_{2-k} \dhtE_{2-k}(z, 0)=
\dhtE_k(z, 0)= 
\frac{k}{2}(\frac{k}{2}-1) \pi^{-\frac{k}{2}}\Gamma(k)E_k(z, 0).
$$
Now  $\frac{k}{2}(\frac{k}{2}-1) \pi^{-\frac{k}{2}}\Gamma(k) \ne 0$ since 
we assumed $k \ge 4$. 
Thus there is  a constant $C$ such that 
$h(z) = f (z)- C F_{0, 2-k}(z)$ has
$$
\xi_{2-k}(h(z)) = \xi_{2-k} (f (z)- C F_{0, 2-k}(z)) 
= g(z).
$$
Now $F_{0, 2-k} \in V_{2-k}^{1}(0)$ by Proposition \ref{lem:xiDelta_arbWeight_Taylor} (5).
so  $h(z) := f (z)- C F_{0, 2-k}(z) \in V_{2-k}^1(0)$.
Since $h(z)$ is  a  preimage of a cusp form under $\xi_{2-k}$ 
 Proposition \ref{prop:liftOfCuspForms} implies that $g(z) =0$.
Therefore, 
$f(z)$ is a multiple of $F_{0, 2-k}(z)$, and we have established that 
 $V_{2-k}^{1}(0)$ is one-dimensional, spanned by $F_{0, 2-k}(z)$,
We know that $V_{2-k}^{1/2}(0) = M_k = \{0\},$
because there are no holomorphic modular forms of negative weight,
hence  $V_{2-k}^{1/2}(0) \ne V_{2-k}^{1}(0)$. 
Finally if $f(z) \in V_{2-k}^{3/2}(0)$, then $\Delta_{2-k}(f)(z) \in V_{2-k}^{1/2}(0) = \{0\}$,
so $f(z) \in V_{2-k}^{1}(0)$ and $V_{2-k}^{1}(0)= V_{2-k}^{3/2}(0)$.
This establishes assertion (1) for $m=1$ for weights $2-k \le -2$.

We have proved assertions (1) and (2) for $m=1$ 
and complete the proof by  induction on $m \ge 1$.  For assertion (1),
the  induction hypothesis says that  
$V_{2-k}^m(0)$ is $m$-dimensional.  
Now Lemma \ref{lem:xiDelta_arbWeight} shows that $V_{2-k}^{m+1}(0)$
is at least $m+1$ dimensional, since $F_{m, k}(z)  \in V_{2-k}^{m+1}(0)\setminus V_{2-k}^m(0)$. 
Suppose that there 
is a form  $f(z) \in V_{2-k}^{m+1}(0)$ linearly independent
 from the set $\{ F_{0, 2-k}(z), \ldots, F_{m, 2-k}(z)\}$. 
Then $\Delta_{2-k} f(z), \Delta_{2-k} F_{0, k}(z), \cdots, \Delta_{2-k} F_{m, k}(z) \in V_{2-k}^m(0)$
are linearly dependent, so there  exist $C, c_0, c_1, \cdots, c_m$ so that 
$$\Delta_{2-k} \( Cf (z)+ c_0 F_{0, k}(z) + \cdots c_m F_{m, k}\) = 0.$$
It now follows that 
$Cf (z)+ c_0 F_{0,k}(z) + \cdots c_m F_{m, k}(z) = D F_{0,k}(z)$ 
holds for some constant $D$, contracting linear independence.  
Thus $V_{2-k}^{m+1}(0)$ has dimension $m+1$.
It is easy to deduce that $V_{2-k}^{m+1}(0) = V_{2-k}^{m+ \frac{3}{2}}(0)$
by applying $(\Delta_k)^{m+1}$ to a given $f(z) \in V_{2-k}^{m+ \frac{3}{2}}(0)$.

The proof of the induction step for assertion  (2) is similar.
We must also use  the result of Proposition 
\ref{prop:liftOfCuspForms}  which shows there are no $1$-harmonic lifts of cusp forms.
\end{proof}

%***************************************************
%
% Proof of Theorem 2.3
%
%***************************************************

\begin{proof}[Proof of Theorem \ref{thm:arbitrary-ramp}]
This result follows  from Theorem \ref{thm:76}.
It remains only to show for $k \ge 4$ that 
$$
c_{n,k, \ell} = \frac{1}{(k-1)^{\ell}}{ n+ \ell \choose \ell}
$$
satisfies the connection equations  of that result.
It is easy to check that $c_{n,k,0}=1$ and $c_{n, k, -1}=0$. The
connection equations \eqref{rec-zero} become the identity
$$
 \frac{1}{(k-1)^{\ell}}{ n+ \ell \choose \ell}= \frac{1}{k-1} (\frac{1}{(k-1)^{\ell -1}}{n+ \ell -1 \choose \ell-1})
 + \frac{1}{(k-1)^\ell}{n+ \ell -1 \choose \ell}.
$$
\end{proof}

\subsection*{Acknowledgments}
The authors thank the reviewer for helpful comments.
The first author thanks the Clay Foundation for support
as a Clay Senior Fellow at ICERM, an NSF-supported institute,
where some work on this
paper was done.

%%%%%%%%%%%%%%%%%
%%
%%  BIBLIOGRAPHY
%%
%%%%%%%%%%%%%%%

\end{document}